\documentclass[oneside,english]{amsart}

\usepackage[latin1]{inputenc}\pagestyle{plain}
\usepackage[dvipsnames]{xcolor}
\usepackage{setspace} \onehalfspacing 
\usepackage{amssymb}
\usepackage{stmaryrd}
\usepackage[]{epsf,epsfig,amsmath,amssymb,amsfonts,latexsym}
\usepackage{enumerate}
\usepackage{mathrsfs}
\usepackage{wasysym}
\usepackage[inline]{enumitem}
\usepackage{mathtools}
\usepackage{braket}		
\usepackage[normalem]{ulem}		
\usepackage{nicefrac}
\usepackage{cancel}
\usepackage{xcolor}
\usepackage[a4paper, left=2.7cm, right=2.7cm, top=3.5cm, bottom=5cm]{geometry}
\usepackage[pdftex=true,hyperindex=true,pdfborder={0 0 0}]{hyperref}
\usepackage{doi}
\usepackage[numbers]{natbib}
\usepackage{cleveref}
\crefname{subsection}{subsection}{subsections}
\usepackage{tikz}
\usetikzlibrary{shapes,arrows,automata,matrix,fit,patterns}
\usepackage{soul} 

\usepackage{tikz-cd}
\usepackage{autonum}		

\theoremstyle{plain}
\newtheorem{theorem}{Theorem}[section]
\newtheorem{lemma}[theorem]{Lemma}
\newtheorem{corollary}[theorem]{Corollary}
\newtheorem{proposition}[theorem]{Proposition}

\newtheorem{question}[theorem]{Question}
\theoremstyle{definition}
\newtheorem{definition}[theorem]{Definition}
\newtheorem{remark}[theorem]{Remark}
\newtheorem{example}[theorem]{Example}



\newcommand{\ZZ}{\mathbb{Z}}			
\newcommand{\NN}{\mathbb{N}}			
\newcommand{\RR}{\mathbb{R}}			
\newcommand{\CC}{\mathbb{C}}            
\newcommand{\symb}[1]{\mathtt{#1}}		
\newcommand{\isdef}{\coloneqq}			
\makeatletter
\newcommand{\vast}{\bBigg@{4}}
\newcommand{\Vast}{\bBigg@{5}}

\makeatother
	

\newcommand{\supp}{
	\operatorname{\mathrm{supp}}%
}

\newcommand{\ii}{\mathrm{i}}				


		\global\long\def\Z{\mathbb{Z}}%
		\global\long\def\N{\mathbb{N}}%
		\global\long\def\F{\mathcal{F}}%

\newcommand{\define}[1]{\textbf{#1}}

\newcommand{\xConfig}[1]{%
	\begin{tikzpicture}[
		baseline=-\the\dimexpr\fontdimen22\textfont2\relax,ampersand replacement=\&]
		\matrix[
		matrix of math nodes,
		nodes={
			minimum size=1.4ex,text width=1.4ex,
			text height=1.4ex,inner sep=3pt,draw={gray!20},anchor=center
		}, row sep=1pt,column sep=1pt
		] (config) {#1};
		\node[draw,rectangle,help lines,gray!50, dashed,fit=(config),inner sep=-1pt] {};
	\end{tikzpicture}
}

\newtheorem{maintheorem}{Theorem}


\title{%
	Effective dynamical systems beyond dimension zero and factors of SFTs
}

\author{Sebasti\'an Barbieri, Nicanor Carrasco-Vargas and Crist\'obal Rojas}

\newcommand{\Addresses}{{
		\bigskip
		
		\hskip-\parindent   S.~Barbieri, \textsc{Departamento de Matem\'{a}tica y ciencia de la computaci\'{o}n, Universidad de Santiago de Chile, Santiago, Chile.}\par\nopagebreak
		\textit{E-mail address}: \texttt{sebastian.barbieri@usach.cl}
		
		\medskip
		
		\hskip-\parindent   N.~Carrasco-Vargas, \textsc{Departamento de Matem\'atica, Pontificia Universidad Cat\'olica de Chile, Santiago, Chile.}\par\nopagebreak
		\textit{E-mail address}: \texttt{njcarrasco@mat.uc.cl}
		
		\medskip
		
		\hskip-\parindent   C.~Rojas, \textsc{Instituto de Ingenier\'ia Matem\'atica y Computacional, Pontificia Universidad Cat\'olica de Chile, Santiago, Chile.}\par\nopagebreak
		\textit{E-mail address}: \texttt{cristobal.rojas@mat.uc.cl}
}}
\date{}

\begin{document}
	
	\maketitle
 
	\begin{abstract}

 Using tools from computable analysis we develop a notion of effectiveness for general dynamical systems as those group actions on arbitrary spaces that contain a computable representative in their topological conjugacy class. Most natural systems one can think of are effective in this sense, including some group rotations, affine actions on the torus and finitely presented algebraic actions. We show that for finitely generated and recursively presented groups, every effective dynamical system is the topological factor of a computable action on an effectively closed subset of the Cantor space. We then apply this result to extend the simulation results available in the literature beyond zero-dimensional spaces. In particular, we show that for a large class of groups, many of these natural actions are topological factors of subshifts of finite type.  
		\medskip
		
		\noindent
		\emph{Keywords: computable analysis, symbolic dynamics, simulation, topological factors of SFTs.}
		
		\noindent
		\emph{MSC2020: 
                37B10, 
                37B02, 
                03D78, 
                20F10. 
                }	
	\end{abstract}
	

\section{Introduction}

 Starting with the work of Hadamard~\cite{Hadamard1898} and the highly influential article of Morse and Hedlund~\cite{MorseHedlund1938}, symbolic dynamical systems have quite often played a pivotal role in the understanding of more general dynamics. A celebrated instance of this is the prominent role of subshifts of finite type (SFT) in the study of Anosov, and more generally of Axiom A diffeomorphisms, through Markov partitions of their non-wandering sets~\cite{Bowen1978}. Another well-known example of this tight relationship is the fact that the natural action of a word-hyperbolic group on its boundary is a very well behaved topological factor of an SFT on the same group~\cite{Coornaert2006-du}.

 These and similar results raise the question of understanding precisely which dynamical systems are topological factors of SFTs. An observation made by Hochman~\cite{Hochman2009b} is that subactions of multidimensional SFTs satisfy strong computability constraints. More precisely, they must be computable maps on effectively closed sets in the sense that the complement of the space of orbits must consist on a union of cylinders whose defining words can be enumerated by a Turing machine. However, the truly remarkable discovery of Hochman is that, up to a difference in the dimension of the acting group and a topological factor, this is the only constraint: every computable homeomorphism on an effectively closed zero-dimensional set is the topological factor of a subaction of a $\ZZ^3$-SFT.

Results linking computable maps on effectively closed zero-dimensional sets to SFTs are called  ``simulation results'', as they express that very explicit and simple models such as SFTs are capable of universally encoding this considerably larger class of dynamical systems. These simulation results along with further developments~\cite{AubrunSablik2010,DurandRomashchenkoShen2010} led to a new understanding of classical results in the theory of symbolic dynamics of group actions, such as the undecidability of the domino problem~\cite{Wang1961,Berger1966}, the existence of aperiodic tilesets~\cite{Berger1966,Robinson1971} and more generally the existence of two-dimensional tilings without computable orbits~\cite{Hanf1974,Myers1974}. Moreover, they also provided the tools to obtain new and long sought-after results, such as the classification of topological entropies of multidimensional SFTs~\cite{HochmanMeyerovitch2010}.

Further work has extended the initial result of Hochman to the context of actions of discrete groups on zero-dimensional spaces~\cite{BS2018,Barbieri_2019_DA,barbieri2023soficity}. Notably, it was shown that for a large class of non-amenable groups called self-simulable, the class of zero-dimensional topological factors of SFTs contains every possible computable action on an effectively closed zero-dimensional set~\cite{Barbieri_Sablik_Salo_2021}. These works have led to new results about the dynamics of such groups. For instance, they have provided new examples of groups that can act freely, expansively and with shadowing on a zero-dimensional space.

A common theme among all of the previous simulation results is that they apply to actions on zero-dimensional spaces. The reason behind this is rooted in the fact that the Cantor space $\{0,1\}^{\NN}$ admits a natural computable structure, where the cylinders are described by finite words, and that allows the application of algorithmic techniques. The main objective of this paper is to explore a generalization to groups acting on non-symbolic spaces, such as compact subsets of $\RR^n$, $\operatorname{GL}_n(\CC)$ or compact abelian groups such as $(\RR/\ZZ)^n$. Thus, the goal of this article is to explore the following question.

\begin{question}
    Can the simulation results be extended to group actions on spaces that are not zero-dimensional? 
\end{question}
    
The theory of computable analysis offers the means to endow separable metric spaces with computable structures that allow the notion of computable map to make sense, and the application of algorithmic techniques possible. In this paper we explore this approach and study its connections with the existing simulation results. In particular, we introduce a very general notion of effective dynamical system (EDS) and show that all of the known simulation results can be extended to this class. By an effective dynamical system we mean one that is topologically conjugate to a computable action over a recursively compact subset of a computable metric space. We do not require the topological conjugacy to be computable, so an EDS does not need to be computable itself. The class of EDS is therefore quite large and encompasses virtually all natural examples (although artificial non-examples can be constructed, see \Cref{sec:factors}). Our main tool will be the following result: 

\begin{maintheorem}\label{thm:zero_dim_effective_extension}
				Let $\Gamma$ be a finitely generated and recursively presented group. For any effective dynamical system $\Gamma \curvearrowright X$ there exists an effectively closed zero-dimensional space $\widetilde{X}\subset \{0,1\}^{\NN}$ and a computable action $\Gamma \curvearrowright \widetilde{X}$ such that $\Gamma \curvearrowright X$ is a topological factor of $\Gamma \curvearrowright \widetilde{X}$.
\end{maintheorem}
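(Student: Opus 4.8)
The plan is to first use the hypothesis that $\Gamma \curvearrowright X$ is effective in order to reduce to a computable model, and then to build a symbolic cover by encoding points of that model through their \emph{names} in the sense of computable analysis. Since being a topological factor is invariant under topological conjugacy, and the definition of an effective dynamical system only requires a (possibly non-computable) conjugacy to a computable action on a recursively compact subset of a computable metric space, I may assume from the outset that $\Gamma \curvearrowright X$ is itself a computable action on a recursively compact $X$ inside a computable metric space $(M,d)$ with dense sequence of ideal points $(s_i)_{i \in \NN}$. Using recursive compactness I would, uniformly in $k$, compute a finite set of indices $I_k$ whose balls $B(s_i, 2^{-k})$ cover $X$. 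A name of a point is a sequence $(i_k)_k$ with $i_k \in I_k$ that is suitably nested and determines, as the intersection of its balls, a single point of $X$; the set $N$ of valid names is a compact subset of $\prod_k I_k$, and the map $\pi_0 \colon N \to X$ sending a name to that point is a continuous surjection.

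The central difficulty is that $\pi_0$ is far from injective, so there is no canonical way to push the $\Gamma$-action forward to $N$: computing $s \cdot x$ from a name of $x$ produces \emph{some} name of $s \cdot x$, but such arbitrary choices will not compose into a genuine action respecting the relations of $\Gamma$. To sidestep this I would encode entire orbits rather than single points. Writing $S$ for the finite generating set and $F(S)$ for the set of words over $S \cup S^{-1}$, let $\widetilde{X}$ consist of all families $\omega = (\omega_w)_{w \in F(S)}$ with $\omega_w \in N$ subject to two constraints: $\pi_0(\omega_w) = \pi_0(\omega_{w'})$ whenever $w$ and $w'$ represent the same element of $\Gamma$, and $\pi_0(\omega_{sw}) = s \cdot \pi_0(\omega_w)$ for every generator $s$ and word $w$. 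The words act on $\widetilde{X}$ by the shift $(s \cdot \omega)_w = \omega_{ws}$, and the first constraint guarantees that this descends to a genuine action $\Gamma \curvearrowright \widetilde{X}$; the factor map is $\pi(\omega) = \pi_0(\omega_\epsilon)$, which one checks is continuous, surjective and equivariant.

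The step I expect to be the most delicate is verifying that $\widetilde{X}$ is effectively closed and that the shift action is computable, since this is exactly where all three effectivity hypotheses must be combined. Each defining constraint cuts out a closed set whose complement is recursively enumerable: that a coordinate fails to be a valid name — either because its balls are not properly nested or because the point they determine lies outside $X$ — is semidecidable, using computability of $d$ for the former and the fact that $X$ is co-recursively-enumerably closed (a consequence of recursive compactness) for the latter; that $\pi_0(\omega_{sw}) \neq s \cdot \pi_0(\omega_w)$ is semidecidable because the maps $x \mapsto s \cdot x$ are computable and positivity of the distance between two points presented by names is recursively enumerable. Crucially, I would impose the coincidence constraint only over pairs with $w =_\Gamma w'$, and this is precisely where \emph{recursive presentation} of $\Gamma$ enters, since that set of pairs is then recursively enumerable; this is also why I index by all words rather than by representatives of $\Gamma$, which need not be computable when the word problem is undecidable. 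As a uniform countable intersection of such sets, $\widetilde{X}$ is effectively closed.

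Finally I would transport everything into $\{0,1\}^{\NN}$. Because each $I_k$ is finite and computable, $N$ and hence $\widetilde{X} \subseteq \prod_{w} N$ are compact, and a self-delimiting computable encoding of the finite alphabets realizes $\widetilde{X}$ as an effectively closed subset of the Cantor space, automatically zero-dimensional as a subspace, while preserving computability of the shift maps. This yields the desired computable action $\Gamma \curvearrowright \widetilde{X}$ on an effectively closed zero-dimensional space, together with the factor map onto $\Gamma \curvearrowright X$.
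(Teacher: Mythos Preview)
Your overall strategy is essentially the paper's: pass to a computable representative, encode orbit points via a sequence of finer and finer effective covers (your ``names'' play the role of the paper's inverse-limit coordinates), index by words in $F(S)$ rather than by elements of $\Gamma$, and use recursive presentation of $\Gamma$ to impose an effectively closed consistency condition. But there is a genuine gap at the step where you claim the shift descends to a $\Gamma$-action.

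Your first constraint says only that $\pi_0(\omega_w) = \pi_0(\omega_{w'})$ when $\underline{w} = \underline{w'}$ in $\Gamma$: the \emph{points} named agree, but the \emph{names} themselves need not. Since you yourself stress that $\pi_0$ is far from injective, nothing prevents $\omega_w \neq \omega_{w'}$ for two words representing the same group element. Concretely, if a word $s$ satisfies $\underline{s} = 1_\Gamma$, then for the shift by $s$ to act trivially on $\widetilde{X}$ you need $(s\cdot\omega)_w=\omega_{ws} = \omega_w$ for every $w$, whereas your constraint only gives $\pi_0(\omega_{ws}) = \pi_0(\omega_w)$. So what you have built is an $F(S)$-action on $\widetilde{X}$, not a $\Gamma$-action, and the theorem is not yet proved.

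The fix is to strengthen the first constraint to $\omega_w = \omega_{w'}$ (equality of names, not merely of their images) whenever $\underline{w} = \underline{w'}$; this is exactly what the paper does when it intersects its subshift covers with the pullback $\widehat{A_n^\Gamma}$ of the full $\Gamma$-shift. The strengthened condition is still effectively closed --- indeed more transparently so, since inequality of two coordinates is open in the product and the set of pairs $(w,w')$ with $\underline{w}=\underline{w'}$ is recursively enumerable by hypothesis. Surjectivity of $\pi$ is unaffected: given $x \in X$, choose for each $g \in \Gamma$ one name of $g \cdot x$ and assign it to every word representing $g$. With this correction the rest of your argument goes through and matches the paper's proof closely.
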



We remark that is it a well-known fact that every group action by homeomorphisms on a compact metrizable space admits a zero-dimensional extension. This extension, however, carries a priory no computable structure. Our contribution is that, for recursively presented groups, the computable structure stemming from the effective nature of the system is preserved by a well-chosen zero-dimensional extension.

Given a finitely generated group $\Gamma$ and an epimorphism $\psi \colon \Gamma \to H$, we say that $\Gamma$ simulates $H$ if given any computable action of $H$ on an effectively closed subset of the Cantor space, the corresponding action of $\Gamma$ induced through $\psi$, is a factor of a $\Gamma$-SFT. If $\psi$ is an isomorphism, then $\Gamma$ is called self-simulable. The class of self-simulable groups includes several interesting examples, see \Cref{thm:simulation_results}.

The main application of~\Cref{thm:zero_dim_effective_extension} is that for recursively presented groups the simulation results in the literature also apply to EDS.

\begin{maintheorem}\label{thm:simulation-by-SFTs-enhanced-version-GOTY}
        Let $\Gamma,H$ be finitely generated groups and $\psi\colon \Gamma \to H$ be an epimorphism. Suppose that $H$ is recursively presented, then $\Gamma$ simulates $H$ if and only if for every effective dynamical system $H \curvearrowright X$ the induced action of $\Gamma$ is a topological factor of a $\Gamma$-SFT.
\end{maintheorem}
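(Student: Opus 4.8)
The plan is to prove the two implications separately, with the reverse direction being almost a tautology and the forward direction being where \Cref{thm:zero_dim_effective_extension} does all the work. Throughout, recall that inducing an $H$-action through the epimorphism $\psi$ means letting $\gamma \in \Gamma$ act by the homeomorphism associated to $\psi(\gamma)$, so that the induced $\Gamma$-action and the original $H$-action share the same underlying maps and the same orbits.

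For the implication ``$\Leftarrow$'', I would argue that every computable action of $H$ on an effectively closed subset $\widetilde{X}\subset\{0,1\}^{\NN}$ is already an effective dynamical system: the Cantor space is a computable metric space, effectively closed subsets are recursively compact, and the action is computable by hypothesis, so $H\curvearrowright\widetilde{X}$ is (trivially topologically conjugate to) a computable action on a recursively compact subset of a computable metric space. Applying the standing assumption to this particular EDS yields that its induced $\Gamma$-action is a topological factor of a $\Gamma$-SFT, which is exactly the definition of $\Gamma$ simulating $H$. Hence this direction requires no more than unwinding definitions, once the recursive compactness of effectively closed subsets of the Cantor space is invoked.

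For the implication ``$\Rightarrow$'', suppose $\Gamma$ simulates $H$ and let $H\curvearrowright X$ be an arbitrary effective dynamical system. Since $H$ is finitely generated and recursively presented, I would apply \Cref{thm:zero_dim_effective_extension} to obtain an effectively closed zero-dimensional space $\widetilde{X}\subset\{0,1\}^{\NN}$, a computable action $H\curvearrowright\widetilde{X}$, and a continuous $H$-equivariant surjection $\phi\colon\widetilde{X}\to X$ realizing $H\curvearrowright X$ as a factor of $H\curvearrowright\widetilde{X}$. Because $H\curvearrowright\widetilde{X}$ is a computable action on an effectively closed subset of the Cantor space and $\Gamma$ simulates $H$, the induced $\Gamma$-action on $\widetilde{X}$ is a topological factor of a $\Gamma$-SFT. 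I would then note that $\phi$ remains a factor map after inducing through $\psi$: as $\gamma$ acts by $\psi(\gamma)$ on both $\widetilde{X}$ and $X$, the $H$-equivariance of $\phi$ immediately upgrades to $\Gamma$-equivariance, so the induced $\Gamma\curvearrowright X$ is a topological factor of the induced $\Gamma\curvearrowright\widetilde{X}$.

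Composing the two factor maps and using that ``is a topological factor of'' is transitive then shows that the induced $\Gamma$-action on $X$ is a topological factor of a $\Gamma$-SFT, completing the argument. I do not expect a genuine obstacle here: the entire substance of the result is contained in \Cref{thm:zero_dim_effective_extension}, and what remains is the bookkeeping that inducing through $\psi$ preserves factor maps together with the transitivity of factoring. The only point demanding a moment of care is verifying that the zero-dimensional extension produced by \Cref{thm:zero_dim_effective_extension} is precisely of the form to which the definition of simulation applies, namely a computable action on an effectively closed subset of $\{0,1\}^{\NN}$, which is exactly what that theorem guarantees.
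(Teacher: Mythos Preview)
Your proposal is correct and follows essentially the same argument as the paper's proof: both directions are handled exactly as you describe, with the reverse direction unwinding definitions and the forward direction invoking \Cref{thm:zero_dim_effective_extension}, passing to the induced $\Gamma$-actions, and composing the resulting factor maps. The paper is slightly terser about why inducing through $\psi$ preserves factor maps, but your added justification is entirely in line with its reasoning.
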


Combining this result with the simulations result in the literature, we obtain that many dynamical systems on non zero-dimensional spaces can be realized as factors of subshifts of finite type. This includes the natural action of $\operatorname{GL}_n(\ZZ) \curvearrowright \RR^n/\ZZ^n$ by left matrix multiplication for $n \geq 5$, actions of $F_2\times F_2$ on the circle by computable rotations on each generator, actions of the braid groups $B_n$ for $n \geq 7$ on compact computable groups, and more. 


To better illustrate the power of \Cref{thm:simulation-by-SFTs-enhanced-version-GOTY}, we develop in detail a particular application which we believe could be special interest. An action of a countable group on a compact abelian topological group by automorphisms is called algebraic. This is a rich class of group actions which has been the object of much study in the literature, for instance in~\cite{Lind1990,Schmidt1995,LindSchmidt99,Chung2014}. Using our results we obtain that a large class of algebraic actions of non-amenable groups can be presented as topological factors of SFTs.

\begin{maintheorem}\label{thm:finitely_presented_algebraic_action}
    Let $\Gamma$ be a finitely generated and recursively presented self-simulable group. Every finitely presented algebraic action of $\Gamma$ is the topological factor of a $\Gamma$-subshift of finite type.
\end{maintheorem}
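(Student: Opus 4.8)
The plan is to reduce the statement to a single verification: that every finitely presented algebraic action of $\Gamma$ is an effective dynamical system. Granting this, the conclusion is immediate from \Cref{thm:simulation-by-SFTs-enhanced-version-GOTY} applied to the identity epimorphism $\psi = \mathrm{id}_\Gamma \colon \Gamma \to \Gamma$: since $\Gamma$ is self-simulable it simulates itself, and since it is moreover finitely generated and recursively presented, the theorem yields that every EDS $\Gamma \curvearrowright X$ is a topological factor of a $\Gamma$-SFT. Thus everything rests on producing, for a given finitely presented algebraic action, a topological conjugacy onto a computable action on a recursively compact subset of a computable metric space.

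First I would recall the dual description of algebraic actions. By Pontryagin duality, an algebraic action $\Gamma \curvearrowright X$ corresponds to the discrete $\ZZ[\Gamma]$-module $M = \widehat{X}$, and the action being finitely presented means precisely that $M$ is a finitely presented $\ZZ[\Gamma]$-module, i.e. it fits into an exact sequence $\ZZ[\Gamma]^m \xrightarrow{A} \ZZ[\Gamma]^n \to M \to 0$ for some matrix $A$ with entries in $\ZZ[\Gamma]$. Dualizing and using $\widehat{\ZZ[\Gamma]^n} \cong ((\RR/\ZZ)^n)^\Gamma$ with the shift action, this realizes $\Gamma \curvearrowright X$, up to topological conjugacy, as the restriction of the shift to the closed $\Gamma$-invariant subgroup
\[
	X \;\cong\; \bigl\{\, x \in ((\RR/\ZZ)^n)^\Gamma \ : \ x\cdot A = 0 \,\bigr\},
\]
where the $m$ coordinates of $x\cdot A$ are finitely supported convolutions determined by the entries of $A$, imposed at every $g \in \Gamma$. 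In other words, $X$ is the algebraic analogue of a subshift of finite type over the alphabet $(\RR/\ZZ)^n$: it is cut out by finitely many finite-window linear closed conditions, one template per row of $A$, applied at every position.

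The core of the argument is then to upgrade this picture to a computable one. I would take $(\RR/\ZZ)^n$ as a recursively compact computable metric space and build $((\RR/\ZZ)^n)^\Gamma$ as a recursively compact computable metric space on which the finitely many generators of $\Gamma$ act by computable shift maps; since the action is generated by these, the whole shift action is computable. It then remains to show that $X$ is effectively closed, hence recursively compact as a closed subset of a recursively compact space. This is where recursive presentation enters: the entries of $A$ are explicit elements of $\ZZ[\Gamma]$, i.e. finite integer combinations of group elements written as words in the generators, so each defining condition is an explicit closed condition, and enumerating all of their $\Gamma$-translates requires only an enumeration of $\Gamma$ itself, which the recursive presentation provides.

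I expect the genuine obstacle to be the bookkeeping of the index set $\Gamma$ when its word problem may be undecidable, since then one cannot directly coordinatize $((\RR/\ZZ)^n)^\Gamma$ by deciding equality of words. The clean way around this is to over-index by the free group $F = F(S)$ on the generators, which has decidable word problem and yields an honest computable metric space $((\RR/\ZZ)^n)^F$ with computable shifts by the generators. Inside it one carves out the configurations that are constant along the fibers of the canonical epimorphism $\pi \colon F \to \Gamma$, that is, those satisfying $y_w = y_{w'}$ whenever $w^{-1}w' \in \ker \pi$; because $\Gamma$ is recursively presented the set $\ker\pi$ is recursively enumerable, so this is an r.e. family of closed conditions and the resulting subspace is effectively closed and naturally $\Gamma$-equivariantly homeomorphic to $((\RR/\ZZ)^n)^\Gamma$. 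Intersecting further with the likewise enumerable conditions coming from the rows of $A$ produces an effectively closed, hence recursively compact, model of $X$ on which the generators act computably. This exhibits the finitely presented algebraic action as an EDS, and the reduction in the first paragraph then completes the proof.
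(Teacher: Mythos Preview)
Your proposal is correct and follows essentially the same route as the paper: reduce to showing that finitely presented algebraic actions are EDS (the paper does this more generally for recursively presented algebraic actions in \Cref{theorem:algebraic_actions_are_EDS}), then conclude via \Cref{thm:simulation-by-SFTs-enhanced-version-GOTY} with $\psi=\mathrm{id}_\Gamma$. Your over-indexing by the free group $F(S)$ to sidestep the possibly undecidable word problem, together with the two families of effectively closed conditions (consistency along $\ker\pi$, enumerable by recursive presentation, and the finitely many linear templates from $A$ shifted over the group), is exactly the paper's argument; the only cosmetic difference is that the paper indexes by the free monoid $S^*$ rather than $F(S)$.
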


In fact, this result applies to an even larger class of algebraic actions which admit ``recursive presentations'', see~\Cref{def:recpresented_algebraic_action}.

We complete our discussion with several observations. There exist at least two notions of computability for shift spaces present in the literature (\Cref{cor:ECP_plus_RP_implies_EDS}). We discuss their connection to the notion of EDS and show that for recursively presented groups they all coincide. We also completely characterize the relations between these notions for groups which are not recursively presented. We then focus on the class of topological factors of EDS. While it is true that computable factors of EDS are EDS (\Cref{prop:computable_factor_EDS_is_EDS}), the class of EDS is in general not closed under topological factor maps. We provide two examples of this fact, one as a non-EDS factor of an EDS on a compact subset of $\CC$, and a non-EDS zero-dimensional factor of a zero-dimensional EDS. In the case of zero-dimensional spaces, we propose a new notion (\Cref{def:WEDS}) of weak effective dynamical system (WEDS). We show that for recursively presented groups, every zero-dimensional factor of an EDS is a WEDS (\Cref{prop:factor-of-EDS-is-WEDS}), and that the class of WEDS is closed under topological factor maps. This class of systems can be though of as those that can be written as an inverse limit of effective subshifts, but with the caveat that the effectiveness of the sequence of subshifts is not required to be uniform. Using this notion, we naturally recover the fact that the class of subshifts which are EDS is closed under topological factor maps.

\subsection{Paper organization} After some preliminaries, we give an introduction to computable analysis and the precise definition of an EDS in~\Cref{sec:comp}. The proof of~\Cref{thm:zero_dim_effective_extension} along with a few interesting side remarks are provided in~\Cref{sec:resultado}. In particular, we show that for zero-dimensional EDS we may always consider the canonical computable structure induced by the cylinder sets and that in this case we do not need the hypothesis that the acting group is recursively presented. We also show that in case the action admits a generating cover, the extension can be taken to be an effective subshift. In~\Cref{sec:simu} we summarize the simulation results in the literature, prove  ~\Cref{thm:simulation-by-SFTs-enhanced-version-GOTY}, and discuss several examples. In~\Cref{sec:algebraic actions} we present the relevant background for ~\Cref{thm:finitely_presented_algebraic_action} along with its proof. In~\Cref{sec:computability-on-shift-spaces} we turn our attention back to shift spaces and study the relationship between the different notions of computability for subshifts. Finally, in~\Cref{sec:factors} we construct the examples showing that the class of EDS is not closed by topological factor maps, introduce the class of WEDS, show that this class is closed by factor maps, and discuss their relationship with EDS.  
 \medskip
 
	 \textbf{Acknowledgments}: The authors are thankful to Mathieu Hoyrup for stimulating conversations on the construction of explicit examples of non effective dynamical systems in higher dimensions. We also thank an anonymous reviewer for suggesting several substantial improvements.
  S. Barbieri was supported by the FONDECYT grants 11200037 and 1240085. N. Carrasco-Vargas  was partially supported by ANID CONICYT-PFCHA/Doctorado Nacional/2020-21201185, ANID/Basal National Center for Artificial Intelligence CENIA FB210017, and the European Union's Horizon 2020 research and innovation program under the Marie Sklodowska-Curie grant agreement No 731143. C. Rojas was supported by ANID/FONDECYT Regular 1230469 and ANID/Basal National Center for Artificial Intelligence CENIA FB210017.
    
\section{Preliminaries}\label{sec:prelim}
\subsection{Computability on countable sets}
A formal introduction to the theory of computation can be found in~\cite{Rogers:1987:TRF:28907} or~\cite{sipser2012introduction}. Here we will provide a brief functional description. We will make use of the word \define{algorithm} to refer to a Turing Machine~\cite{Turing1936}. The reader may think about a Turing machine as a computer program written in any standard programming language which receives some finite information called \define{input}, then proceeds to sequentially execute a finite set of instructions. Depending on these instructions and the input, the execution may at some halt and return some finite information, called \define{output}, or it may not halt, in which case the execution will continue forever without ever providing an output. 

The notion of algorithm characterizes mathematical objects according to the extent to which they can be \emph{computed}. Depending on what exactly the algorithm is required to compute, we will use different names for them in our definitions (computable, effective, decidable, etc...). 

Let us start by functions over words.  For a set $A$, denote by $A^* = \bigcup_{n \in \NN}A^n$ the set of all words on $A$. Let $A,B$ be finite sets (called \define{alphabets} in this setting). A \define{partial} function $f\colon A^* \to B^*$ is a function which is not necessarily defined in all of $A^*$. A \define{computable function} is a partial function $f\colon A^* \to B^*$ for which there exists an algorithm which on input $w \in A^*$ halts if and only if $f$ is defined on $w$, in which case it outputs $f(w)$. We say that a partial function is \define{total} if it is defined on every element of $A^*$.

We may also define computable functions on other countable sets by representing them through words in a canonical way. For instance, we will represent non-negative integers using the alphabet $A = \{\symb{0},\symb{1}\}$ through their binary representation. Note that this is a ``good" representation in the sense that comparing or adding integers, boils down to computable functions on their representations. Similarly, we can represent rational numbers, tuples in $\NN^d$ or sequences in $\NN^*$ by words using some bijection with binary words. A \define{named set} is then a pair $(X,\nu)$ where $\nu\colon\{\symb{0},\symb{1}\}^* \to X$ is a bijection that we will call \define{representation}. Again, we will choose representations which are ``good" in the sense that they make the relevant structure of the named set computable through their representations. In what follows, we will therefore speak freely of algorithms working on named sets without explicitly referring to their representations.  

Let $X$ be a named set and $A \subset X$. We say that $A$ is \define{recursively enumerable (r.e.)} or \define{semi-decidable} if there is an algorithm which, on input $w$, halts if and only if $w\in A$. Equivalently, $A$ is r.e. if it is either empty, or it equals $\{f(n) : n\in\N\}$ for some total computable function $f\colon \NN \to X$. We say that $A$ is \define{decidable} if both $A$ and $X\setminus A$ are semi-decidable.

\begin{example}
Let $(\varphi_e)_{e\in\NN}$ be an effective enumeration of all partial computable functions (for example, in lexicographic order according to their ``code''). The famous \define{halting set} $\texttt{Halt}=\{ e \in \NN : \varphi_e(e) \mbox{ halts}\}$ is an example of a semi-decidable set that is not decidable.
\end{example}

Every time we have a sequence $(x_n)_{n\in\N}$ of objects that are computable in some way, for instance a sequence $(f_n)_{n\in\N}$ of computable functions or a sequence $(A_n)_{n\in\N}$ of decidable sets, we will say that the computability of the sequence is \define{uniform} when all the objects in the sequence can be computed by the same algorithm. For example, a sequence $(f_n)_{n\in\N}$ of total functions is \define{uniformly computable} if there is an algorithm which on input $(n,w)$ halts and outputs $f_n(w)$. Similarly, a sequence $(A_n)_{n\in\N}$ of sets is \define{uniformly r.e.} if there is an algorithm which on input $(n,w)$ halts if and only if $w\in A_n$. 

\begin{example}
Let $(\varphi_e)_{e\in \NN}$ be the lexicographic enumeration of all partial computable functions and let $A_e = \{n\in \NN : \varphi_e(n) \text{ halts}\}$. Then the sequence of all recursively enumerable sets $(A_e)_{e\in\NN}$ is uniform. On the other hand, the sequence $(A_{e_n})_{n\in\NN}$ of all decidable sets is not uniform. 
\end{example}

\subsection{Finitely generated groups}

Let $\Gamma$ be a group. Given $S\subset \Gamma$ and a word $w = w_1\dots w_n \in S^*$, we denote by \underline{$w$} the element of $\Gamma$ obtained by multiplying the $w_i$. Recall that $\Gamma$ is \define{finitely generated} if $\Gamma = \{\underline{w} : w \in S^*\}$ for some finite $S\subset \Gamma$. We call such a set $S$, a \define{generator} for $\Gamma$. In this paper we will always assume generators to be symmetric, i.e., closed by inverses. The \define{word problem} of $\Gamma$ with respect to $S$ is the set of words \[\texttt{WP}_S(\Gamma)= \{w \in S^* : \underline{w} = 1_{\Gamma}\}.\]

We say that a finitely generated group $\Gamma$ is \define{recursively presented} if $\texttt{WP}_S(\Gamma)$ is recursively enumerable, and that $\Gamma$ has \define{decidable word problem} if $\texttt{WP}_S(\Gamma)$ is decidable. These two properties do not depend upon the choice of $S$, as long as $S$ is a finite generating set for $\Gamma$. 
A surjective group homomorphism is called an \textbf{epimorphism}.

\begin{remark}\label{rem:decidablewp_isnamed}
    Let $\Gamma$ be a group which is finitely generated by $S$. Consider the canonical map $\pi\colon S^* \to \Gamma$ which assigns to a word $w$ its corresponding element $\pi(w)=\underline{w}$. If the word problem of $\Gamma$ is decidable, then one can compute a set of words that uniquely represent the elements of $\Gamma$. By identifying this set with $\{\symb{0},\symb{1}\}^*$ through a computable bijection, one obtains a representation that makes the group operation computable. Conversely, it is not hard to see that if a finitely generated group admits a representation that makes the group operation computable, then it necessarily has decidable word problem. 
\end{remark}

\subsection{Group actions and their dynamics}

Given a group $\Gamma$ and actions by homeomorphisms on compact metrizable spaces $\Gamma \curvearrowright X$ and $\Gamma \curvearrowright Y$, a continuous surjective map $f\colon X \to Y$ is a \define{topological factor map} if for every $g\in \Gamma$ and $x \in X$ we have $g(f(x)) = f(gx)$. Furthermore, we say that a topological factor map is a \define{topological conjugacy} if it is an homeomorphism. A major challenge in dynamical systems theory is to classify this kind of group actions up to topological conjugacy or topological factor maps.

Given a group homomorphism $\psi\colon G\to\Gamma$ and a group action $\Gamma\curvearrowright X$, we define the \textbf{induced} action $G\curvearrowright X$ by $g x=\psi(g) x$ for every $g\in G$. 

\subsection{Shift spaces}\label{subsec:shift_spaces}

Let $A$ be an alphabet set and $\Gamma$ be a group. The \define{full $\Gamma$-shift} is the set $A^{\Gamma} = \{ x\colon \Gamma \to A\}$ equipped with the left \define{shift} action $\Gamma \curvearrowright A^{\Gamma}$ by left multiplication given by 
\[ gx(h) \isdef x(g^{-1}h) \qquad \mbox{  for every } g,h \in \Gamma 
\mbox{ and } x \in A^\Gamma. \]
  The elements $x \in A^\Gamma$ are called \define{configurations}. Given a finite set $F\subset \Gamma$, a \define{pattern} with support $F$ is an element $p \in A^F$. We denote the cylinder generated by $p$ by $[p] = \{ x \in A^{\Gamma} : x|_F = p \}$ and note that the cylinders are a clopen base for the prodiscrete topology on $A^{\Gamma}$. 
 
\begin{definition}
	A subset $X \subset A^\Gamma$ is a \define{$\Gamma$-subshift} if and only if it is $\Gamma$-invariant and closed in the prodiscrete topology. 
\end{definition}

If the context is clear, we drop the $\Gamma$ from the notation and speak plainly of a subshift.  Equivalently, $X$ is a subshift if and only if there exists a set of forbidden patterns $\F$ such that \[X=X_\F = \{ x \in A^{\Gamma} : gx\notin [p] \mbox{ for every } g \in \Gamma, p \in \F  \}.\]

A subshift $X$ is of \define{finite type} (SFT) if there exists a finite set of forbidden patterns $\mathcal{F}$ for which $X = X_{\mathcal{F}}$.

We say that an action $\Gamma \curvearrowright X$ on a compact metrizable space is \define{expansive} if for some metric $d$, which is compatible with the topology, there exists $C>0$ such that whenever $x,y \in X$ are distinct then $\sup_{g \in \Gamma} d(gx,gy) \geq C$. It is a well known elementary fact (for instance, see~\cite[Theorem 2.1]{Hedlund1969}) that an action $\Gamma \curvearrowright X$ on a zero-dimensional compact metrizable space is expansive if and only if it is topologically conjugate to a subshift.



\subsection{Computability on Cantor spaces}

Let $A$ be a finite set with $|A|\geq 2$. The space $A^{\NN}$ endowed with the product of the discrete topology will be called \define{Cantor space}. It is compact, metrizable and has topological dimension zero. The Cantor space is universal in the sense that every compact, metrizable space with topological dimension zero is homeomorphic to a closed subset of $A^{\NN}$ (Further on in~\Cref{thm:compacto-y-cero-dimensional-tiene-copia-en-el-cantor} we will prove an effective version of this result). As we will only deal with compact metrizable spaces, in what follows we will just say that a space is \define{zero-dimensional} to say that it has these three properties.

A natural metric in the Cantor space is given by \[ d(x,y) = 2^{-\inf \{n \in \NN\ :\ x_n \neq y_n \}  } \mbox{ for } x,y \in A^{\NN}.\]  

Given a word $w=w_0\dots w_n \in A^*$, the \define{cylinder} determined by $w$ is the clopen set \[ [w] = \{x \in A^{\NN} : x_0 \dots x_n = w_0 \dots w_n\}. \]
The collection of all cylinders forms a basis for the topology on $A^{\NN}$.

Next we will introduce computability notions for $A^{\NN}$. We say that a set $U \subset A^{\NN}$ is \define{effectively open} if there exists a recursively enumerable set $L\subset A^{*}$ such that $U = \bigcup_{w \in L}[w]$. Intuitively, this means that this set can be approximated by an algorithm from the inside. Similarly, we say that a set $C \subset A^{\NN}$ is \define{effectively closed}, if it is the complement of an effectively open set. For a subspace $X\subset A^{\NN}$, we say that a set is effectively open (respectively closed) in $X$ if it is the intersection of $X$ with an effectively open (respectively closed) set.

\begin{remark}
    Let $(U_i)_{i \in \NN}$ be an uniform sequence of effectively open sets and write $U_i = \bigcup_{w \in L_i}[w]$. As the union of a uniform sequence (in particular a finite sequence) of recursively enumerable languages is recursively enumerable, it follows that the union $\bigcup_{i\in \NN} U_i$ is also effectively open. An analogous argument shows that the intersection of any uniform sequence (in particular, a finite sequence) of effectively closed sets is effectively closed.
\end{remark}

\begin{remark}\label{rem: Cantor is rec.compact}
    Given a finite list of words $w_1,\dots,w_n \in A^*$, it is easy to write an algorithm which decides if $\bigcup_{i =1}^n [w_i] = A^{\NN}$. In particular, as $A^{\NN}$ is compact it follows that if $U = \bigcup_{w \in L}[w]$ is an effectively open set, we may semi-decide from $L$ whether $U = A^{\NN}$ by repeatedly checking whether finite unions of its associated cylinder sets cover $A^{\NN}$. Moreover, taking complements, we deduce that we can semi-decide if an effectively closed set is empty.
\end{remark}


\begin{definition}\label{def:computability on cantor space}Let $A,B$ be alphabets, we say that a function $f\colon X\subset A^{\NN} \to B^{\NN}$ is \define{computable} if uniformly for every $w\in B^*$ there exists an effectively open set $U_w\subset A^{\NN}$ such that $f^{-1}([w]) = U_w \cap X$.
\end{definition}

Notice that computable functions are always continuous in their domain and that the composition of computable functions is computable. An equivalent and perhaps more intuitive way to think about computable functions on $X\subset A^{\NN}$ is as those for which there is an algorithm that given a prefix of $x \in X$ yields a prefix of $f(x)$, and the sequence of these prefixes monotonously converges to $f(x)$.

\begin{proposition}\label{prop:caracterization_of_computable_OD_function}
    Let $A,B$ be alphabets. A function $f\colon X\subset A^{\NN} \to B^{\NN}$ is computable if and only if there exists a partial computable function $g\colon A^* \to B^*$ such that for any $x \in X$, $g(x|_{\{0,\dots,n\}})$ is defined for every $n$, the cylinders $[g(x|_{\{0,\dots,n\}})]$ form a nested sequence and \[ \{f(x)\}  = \bigcap_{n \in \NN} [g(x|_{\{0,\dots,n\}})]. \]
\end{proposition}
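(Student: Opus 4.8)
The statement is an equivalence, so the plan is to establish the two translations between the open-set formulation of computability in \Cref{def:computability on cantor space} and the prefix-machine formulation. The bridge in both directions is the observation that, for $x\in X$, a word $w\in B^*$ is a prefix of $f(x)$ exactly when $x\in f^{-1}([w])=U_w\cap X$, where $U_w=\bigcup_{v\in L_w}[v]$ for a uniformly recursively enumerable family $(L_w)_{w\in B^*}$ of languages.

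\emph{From computable to the prefix machine.} Assuming $f$ computable, fix one algorithm that on input $w$ enumerates $L_w$. I would define a total computable $g\colon A^*\to B^*$ as follows: on input $u$, run the uniform enumeration for $|u|$ steps, collect the finite set $S(u)$ of all words $w$ for which some $v\in L_w$ that is a prefix of $u$ has already appeared, and output the longest element of $S(u)$ when $S(u)$ is linearly ordered by the prefix relation (and, say, the empty word otherwise). The three required properties are then checked on an arbitrary $x\in X$. Monotonicity in both the prefix $u$ and the step budget gives $S(x|_{\{0,\dots,n\}})\subseteq S(x|_{\{0,\dots,n+1\}})$, which yields the nesting of the cylinders $[g(x|_{\{0,\dots,n\}})]$. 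Every $w\in S(x|_{\{0,\dots,n\}})$ witnesses $x\in U_w\cap X=f^{-1}([w])$, hence is a prefix of $f(x)$; in particular $S(x|_{\{0,\dots,n\}})$ is a chain and each $g(x|_{\{0,\dots,n\}})$ is a prefix of $f(x)$. Finally, for any prefix $w$ of $f(x)$ we have $x\in U_w$, so some $v\in L_w$ that is a prefix of $x$ is eventually enumerated; once $n$ exceeds both $|v|$ and the enumeration time of $v$, the word $w$ lies in $S(x|_{\{0,\dots,n\}})$, forcing $|g(x|_{\{0,\dots,n\}})|\to\infty$. Together these give $\bigcap_n[g(x|_{\{0,\dots,n\}})]=\{f(x)\}$.

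\emph{From the prefix machine to computable.} Conversely, given $g$, I would set $U_w=\bigcup\{[u]:u\in A^*,\ g(u)\text{ is defined and }w\text{ is a prefix of }g(u)\}$. The language $\{u:g(u)\downarrow,\ w\preceq g(u)\}$ is recursively enumerable uniformly in $w$ by simulating $g(u)$ and checking the prefix condition, so $(U_w)_{w\in B^*}$ is a uniform family of effectively open sets. To verify $f^{-1}([w])=U_w\cap X$, note that the hypotheses force each $g(x|_{\{0,\dots,n\}})$ to be a prefix of $f(x)$ and $|g(x|_{\{0,\dots,n\}})|\to\infty$; hence for $x\in X$ one has $x\in U_w$ exactly when some prefix $x|_{\{0,\dots,n\}}$ has $w$ as a prefix of its $g$-image, which happens exactly when $w$ is a prefix of $f(x)$, i.e. $x\in f^{-1}([w])$.

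\emph{Main obstacle.} The only delicate point is the first direction: the machine $g$ must be genuinely computable, hence halt on every input, yet still produce arbitrarily long correct prefixes in the limit and never overshoot with incompatible outputs. The time budget of ``$|u|$ steps'' reconciles halting with convergence, while the chain property of $S(u)$ for prefixes of points of $X$ --- itself a consequence of $f^{-1}([w])=U_w\cap X$ --- is what guarantees that taking the longest confirmed word yields a well-defined, nested, and convergent sequence. The backward direction is then essentially bookkeeping.
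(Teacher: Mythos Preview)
Your proof is correct and follows essentially the same strategy as the paper: in both directions the bridge is the equivalence ``$w$ is a prefix of $f(x)$ iff some prefix of $x$ lies in $L_w$'', and the backward direction is literally identical to the paper's. In the forward direction your ``run the master enumeration for $|u|$ steps and output the longest confirmed prefix'' is a cleaner variant of the paper's construction (which instead processes enumerated pairs one at a time, extending $g$ incrementally while maintaining the nesting invariant); your version has the pleasant feature of producing a total $g$ directly, whereas the paper's $g$ is only guaranteed to be defined along prefixes of points of $X$.
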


\begin{proof}
    Suppose that $f$ is computable, we shall construct a partial computable function $g\colon A^*\to B^*$ with the required properties. As $f$ is computable, there is an algorithm which can list in some order all $(v,w)\in A^*\times B^*$ such that $[v]\cap X \subset f^{-1}([w])$. Set $g(\epsilon) = \epsilon$ and start the listing algorithm. Iteratively, for every pair $(v,w)$ which is listed, compute the largest prefix $v'\in A^*$ for which $g(v')$ is already defined. If $g(v')$ is strictly a prefix of $w$, set $g(v) = w$ and $g(v'')=g(v')$ for every intermediate word $v''$ which has $v'$ as a prefix and is a prefix of $v$. Otherwise, do nothing. The function $g\colon A^*\to B^*$ resulting for this procedure is a partial computable map which satisfies the required properties.
    
    Conversely, given a partial computable function $g\colon A^* \to B^*$ with the properties of the statement and $w \in B^*$, the collection \[L_w = \{v \in A^* : g(v) \mbox{ is defined and } w \mbox{ is a prefix of } g(v)\},\] is recursively enumerable uniformly on $w$. Therefore the collection $U_w = \bigcup_{v \in L_w}[v]$ is uniformly effectively open and because of the assumptions on $g$ it satisfies $f^{-1}([w]) = X \cap U_w$. 
\end{proof}

We shall now provide a few examples.

\begin{example}\label{ex:shift_map}
    The shift map $\sigma \colon A^{\NN} \to A^{\NN}$ given by $\sigma(x)_n = x_{n+1}$ is computable. Indeed the map $g\colon A^* \to A^*$ which deletes the first symbol, that is $g(x_1\dots x_m) = x_2\dots x_m$ is computable and satisfies the required monotonicity condition.
 \end{example}

 \begin{example}
    Consider the map $f\colon \{\symb{0},\symb{1}\}^{\NN} \to \{\symb{0}, \symb{1},\symb{2}\}^{\NN}$ given by \[ f(x)_0 = \begin{cases}
        \symb{0} & \mbox{ if } x_0 = \symb{0}\\
        \symb{1} & \mbox{ if } x_0x_1 = \symb{10}\\
        \symb{2} & \mbox{ if } x_0x_1 = \symb{11}\\
    \end{cases} \mbox{ and }  f(x)_k = \begin{cases}
        f(\sigma(x))_{k-1} & \mbox{ if } x_0 = \symb{0}\\
        f(\sigma^2(x))_{k-1} & \mbox{ if } x_0 = \symb{1}\\
    \end{cases} \mbox{ for } k\geq 1.\]
    Where $\sigma$ is the shift map from~\Cref{ex:shift_map}. It is clear by its recursive definition that $f$ is a computable map. Moreover, this map provides an example of a computable homeomorphism (computable bijection with computable inverse) between $\{\symb{0},\symb{1}\}^{\NN}$ and $\{\symb{0}, \symb{1},\symb{2}\}^{\NN}$. This construction can be easily generalized to provide a computable homeomorphism between $A^{\NN}$ and $B^{\NN}$ for any finite $A,B$ with at least two elements each, in fact, we shall show that a much more general statement holds in~\Cref{thm:compacto-y-cero-dimensional-tiene-copia-en-el-cantor}.
 \end{example} 


\begin{definition}
    Let $\Gamma$ be a finitely generated group and $X\subset A^{\NN}$. We say that an action $\Gamma \curvearrowright X$ is \define{computable} if for some finite set of generators $S$ and each $s\in S$ we have that the group action map $f_s \colon X \to X$ given by $f_s(x) = sx$ is computable. 
\end{definition}

Notice that as the composition of computable maps is computable, we have that for any $g \in \Gamma$ the map $f_g\colon X \to X$ given by $f_g(x)=gx$ is computable. Therefore, if this condition is satisfied by some generating set, then it is uniformly satisfied by all of them.

Although there are only countably many computable maps, almost every action that one may come up with naturally (without introducing explicitly an uncomputable parameter) is likely going to be computable. Let us provide a few examples of computable actions on Cantor spaces.

\begin{example}
    Let $A = \{\symb{0},\symb{1}\}$ and consider the \define{binary odometer} action $\ZZ \curvearrowright \{\symb{0},\symb{1}\}^{\NN}$ induced by the homeomorphism $T\colon \{\symb{0},\symb{1}\}^{\NN} \to \{\symb{0},\symb{1}\}^{\NN}$ defined by 
    
    \[ T(x)_n = \begin{cases} \symb{0} & \mbox{ if } n < c,\\ \symb{1} & \mbox{ if } n = c,\\
    x_n & \mbox{ if } n > c    
    \end{cases}\qquad \mbox{ where } \quad c = \inf\{k \in \NN: x_k = \symb{0}\}.\]

    Clearly $\{\symb{0},\symb{1}\}^{\NN}$ is effectively closed. We claim that the map $T$ is computable: let $x_0\dots x_{n-1}$ be the first $n$ digits of $x\in A^{\NN}$, we can read the sequence from left to right swapping $\symb{1}$ for $\symb{0}$ until we either read a $\symb{0}$, in which case we swap it for a $\symb{1}$ and leave the rest of the sequence untouched, or we reach the end of the sequence. The resulting sequence corresponds to the first $n$-digits of the image of $T(x)$. A similar argument shows that $T^{-1}$ is computable as well, and as $\{-1,1\}$ generates $\ZZ$, we conclude that the binary odometer action is computable. 
\end{example}

\begin{example}\label{ex:thompson}
Let $\mathcal{U} = \{u_1,\dots,u_n\}$ and $\mathcal{V} = \{v_1,\dots,v_n\}$ be two sets of binary words which induce partitions of $\{\symb{0},\symb{1}\}^{\NN}$, that is, such that $\{\symb{0},\symb{1}\}^{\NN} = \bigcup_{i=1}^{n}[u_i] = \bigcup_{i=1}^{n}[v_i]$ and the unions are disjoint. Consider the homeomorphism $f$ of $\{\symb{0},\symb{1}\}^{\NN}$ such that $f(u_ix) = v_ix$, that is, if an infinite word begins by $u_i$, it replaces that prefix by $v_i$ and leaves the rest of the word unchanged. Notice that for any choice of $\mathcal{U},\mathcal{V}$ the associated homeomorphism is computable.

Three celebrated groups can be defined using these homeomorphisms, namely:
\begin{enumerate}
    \item Thompson's group $F$ of all such homeomorphisms where the elements of both $\mathcal{U}$ and $\mathcal{V}$ are indexed in lexicographical order.
    \item Thompson's group $T$ of all such homeomorphisms where the elements of both $\mathcal{U}$ and $\mathcal{V}$ are indexed in lexicographical order up to a cyclical permutation.
    \item Thompson's group $V$ of all such homeomorphisms.
\end{enumerate}
The three groups above satisfy $F \leqslant T \leqslant V$ and are finitely generated~\cite{CannonFloydParry_thompsongroups}. Their natural actions on $\{\symb{0},\symb{1}\}^{\NN}$ are examples of computable actions on an effectively closed set.
\end{example}

Naturally, for any named set $\mathcal{N}$, we can consider computability of maps from $A^{\mathcal{N}}$ to $B^{\mathcal{N}}$ through the identification of $\mathcal{N}$ with $\NN$. For instance, if we let $\varphi \colon \NN \to \ZZ$ be the bijection $\varphi(n) = (-1)^n \lceil \frac{n}{2}\rceil$, a set $X\subset A^{\ZZ}$ would be effectively closed if and only if $X' = \{x' \in A^{\NN}: x' = x \circ \varphi \mbox{ for some } x \in X\}$ is effectively closed in $A^{\NN}$.

\begin{example}
    Consider the homeomorphisms $t$ and $a$ on $\{\symb{0},\symb{1}\}^{\ZZ}$ given by \[ t(x)_{n} = x_{n+1} \quad \mbox{ and } \quad a(x)_n = \begin{cases}
        \symb{1}-x_n & \mbox{ if } n = 0,\\
        x_n & \mbox{ if } n \neq 0
    \end{cases} \quad \mbox{ for every } x \in \{\symb{0},\symb{1}\}^{\ZZ} \mbox{ and } n \in \ZZ.   \]
    In words, the map $t$ shifts a bi-infinite binary sequence to the left, while the involution $a$ swaps the symbol at the origin. The maps $t,t^{-1}$ and $a$ generate the \define{lamplighter group}. If we identify $\{\symb{0},\symb{1}\}^{\ZZ}$ with $\{\symb{0},\symb{1}\}^{\NN}$ through a computable bijection between $\NN$ and $\ZZ$ we get that the maps corresponding to $t,t^{-1}$ and $a$ are computable. It follows that the induced natural action $(\ZZ \wr \ZZ/2\ZZ)\curvearrowright \{\symb{0},\symb{1}\}^{\ZZ}$ is computable.
\end{example}

\section{Computability and dynamical systems}\label{sec:comp}

In this section we introduce and discuss the main notion of this work: effective dynamical systems. This notion generalizes the idea of a computable action over an effectively closed subset of $A^{\NN}$ to metric spaces with positive topological dimension. Our definition will be expressed in the formalism of computable analysis, to which we will provide a short introduction.

\subsection{Computable analysis}
Computable analysis is about making algorithms able to process infinite objects, such as real numbers or infinite sequences. A convenient unifying way to formalize this idea is to see these objects as points of a metric space with an extra computable structure. Most of the results presented here are well known and can be found in the literature (we refer to ~\cite{Brattka2021} for a modern exposition). We also present some new results related to zero-dimensional spaces. 

\begin{definition}
A \define{computable metric space} is a triple $(X,d,\mathcal{S})$, where $(X,d)$ is a separable metric space
and $\mathcal{S} = \{s_i : i \ge 0\}$ a countable dense subset of $X$, such that there exists an algorithm
which, upon input $(i,j,n) \in \mathbb{N}^{3}$,
outputs  $r\in \mathbb{Q}$ such that
$$|d(s_i,s_j)-r| \le 2^{-n}.$$
We say that the distances~$d(s_i,s_j)$ are \define{uniformly computable} from $i,j$.
\end{definition}

For a rational number $r>0$ and $x$ an element of $X$, we denote by $B(x,r) = \{ z \in X \ : \ d(z,x)<r\}$ the open ball with center $x$ and radius $r$. The balls centered on elements of
$\mathcal{S}$ with rational radii are called \define{basic balls}. A computable enumeration of the basic balls $B_{n}=B(s^{(n)},r^{(n)})$ can be obtained by taking for instance a bi-computable bijection $\varphi\colon\mathbb{N} \rightarrow
\mathbb{N} \times \mathbb{Q}$ and letting $s^{(n)} = s_{\varphi_1 (n)}$ and $r^{(n)} = \varphi_2 (n)$, where $\varphi(n) = (\varphi_1 (n),\varphi_2 (n))$.  We fix such a computable enumeration from now on. For any
subset $I$ (finite or infinite)
of $\mathbb{N}$, we define
\[U_{I} = \bigcup_{n \in I} B_n.\]

An open set $V\subset X$ is \define{effectively open} if $V=U_I$ for some recursively enumerable set $I\subset\mathbb{N}$.  We note that finite intersections, and unions of countably many uniformly effectively open sets, are again effectively open. A set $K\subset X$ is \define{effectively closed} if it is the complement of an effectively open set. 

A point $x\in X$ is \define{computable} if the set~$\{n\in\mathbb{N}:x\in B_n\}$ is recursively enumerable. Note that a point $x\in X$ is computable if and only if one can uniformly compute a sequence $(s_{\varphi(n)})_{n\in\N}$ of elements of $\mathcal{S}$ which satisfy $|s_{\varphi(n)}-x|\leq 2^{-n}$ for every $n \in \NN$. 

\begin{example}
The set $\mathbb{R}$ of real numbers, with the Euclidean metric~$d(x,y)=|x-y|$
and $\mathcal{S}=\mathbb{Q}$ is a computable metric space. The basic balls here are the
open intervals with rational endpoints. In this case the notion of computable point can be split into two: a real $x$ is \define{lower semi-computable} if the set $\{q\in \mathbb{Q}:q<x\}$ is recursively enumerable and \define{upper semi-computable} if $-x$ is lower semi-computable. It is an easy exercise to verify that $x$ is computable if and only if it is both lower and upper semi-computable. An example of a lower semi-computable real number which is not computable is given by \[h=\sum_{e\in \texttt{Halt}}2^{-e}\] where $\texttt{Halt}$ is the halting set (or any other r.e. set that is not decidable). 
\end{example}

\begin{example}\label{ex:closed rational ball}
    For a computable point $x\in X$, a closed ball of the form $\overline{B}(x,r)=\{y\in X : d(x,y)\leq r\}$ is effectively closed if and only if $r$ is upper semi-computable. In particular, the closure of every basic ball is effectively closed. However, if we let $r$ be a lower semi-computable but not computable number, then the open ball $B(x,r)$ is an example of an effectively open set whose closure is not effectively closed.
\end{example}

\begin{example}\label{example:computable_in_cantor_space}
Let $A$ be a countable set with at least two elements. The space $A^{\NN}$ with the metric introduced in~\Cref{subsec:shift_spaces} has a natural computable metric space structure using a standard enumeration of the dense countable set of eventually constant configurations \[\mathcal{S} = \{w\symb{a}^{\infty}:
w \in A^{*}, \symb{a}\in A\}.\] In this case, the basic balls are
the cylinders and thus for the case when $A$ is finite we recover the computable structure introduced in ~\Cref{subsec:shift_spaces}. We will call this the \define{canonical computable structure} of Cantor spaces.  In the case where $A = \NN$ we obtain an analogous canonical computable structure for the \define{Baire Space} $\NN^{\NN}$.
\end{example}

\begin{example}\label{product}
    Let $(X_i,d_i,\mathcal{S}_i)_{i \in \NN}$ be a sequence of uniformly computable metric spaces such that the distances $d_i$ are uniformly bounded. Write $\mathcal{S}_i = \{s_{i,k}\}_{k \in \NN}$. The product space $\left(\prod_{i \in \NN} X_i,d,\mathcal{S}\right)$ is a computable metric space where $d((x_i)_i,(y_i)_i)=\sum_{i \in \NN}2^{-i}d_i(x_i,y_i)$ and \[\mathcal{S}=\{ (x_i)_{i \in \NN}\in  \prod_{i \in \NN} \mathcal{S}_i : \mbox{ there is } m,L \in \NN \mbox{ such that for } \ell \geq L, x_{\ell} = s_{\ell,m} \}.\] 
\end{example}


Let $(X,d)$ and $(X',d')$ be computable metric spaces. Let~$(B'_m)_{m\in\NN}$ denote the canonical enumeration of the basic balls of~$X'$. A function $f\colon X \to X'$ is
\define{computable} if there exists an algorithm
which, given as input some integer $m$,
enumerates a set $I_m\subset \NN$
such that \[f^{-1} (B'_m) = U_{I_m},\]
that is, if the preimages~$f^{-1}(B'_m)$ of basic balls are effectively open sets uniformly in $m$. More generally, for any $Y\subset X$, we say $f\colon Y\to X'$ is \define{computable} if there exists an algorithm
which, given as input some integer $m$,
enumerates a set $I_m$ such that \[f^{-1} (B'_m) = U_{I_m}\cap Y.\]

\begin{remark}
For Cantor spaces $A^{\NN},B^{\NN}$ endowed with the canonical computable structure given in~\Cref{example:computable_in_cantor_space}, the notion of computable function boils down to~\Cref{def:computability on cantor space}.
\end{remark}

It follows immediately from the definition that computable functions are continuous and closed under composition. It is perhaps more intuitively familiar, and in fact equivalent, to think of a computable function as one for which there is an algorithm which, provided with arbitrarily good approximations, outputs arbitrarily good approximations of $f(x)$. The formal statement and the proof is analogous to~\Cref{prop:caracterization_of_computable_OD_function}.

\subsubsection{Computability of compact sets}

\begin{definition}\label{def:recursive compactness}
    A compact set $K\subset X$ is said to be \define{recursively compact}
if the inclusion $$K \subset U_I,$$
where $I$ is some finite subset of $\mathbb{N}$, is semi-decidable. That is, if there is an algorithm which, given $I$ as input, halts if and only if
the inclusion above is verified. 
\end{definition}

We remark that for a recursively compact set $K$, the inclusion $K\subset U_{I}$ can be semi-decided for any effectively open set $U_{I}$, as it suffices to run in parallel the above algorithm on all finite subsets $I' \subset I$ and halt if one of them halts.

\begin{example}
The Cantor space is recursively compact. Indeed, recursive compactness for Cantor spaces boils down to check whether a finite list of cylinders covers the whole space, which is clearly semi-decidable (actually, even decidable, see~\Cref{rem: Cantor is rec.compact}). A similar reasoning applies to the unit interval, which is also recursively compact.  
\end{example}

We will next prove that the product of a collection of uniformly recursively compact metric spaces is again recursively compact. We refer to~\cite{Rett2013} for a treatment of the computability of Tychonoff's theorem in the general case. 

\begin{proposition}\label{thm:Tychonoff_computable}
    Let $(X_i,d_i,\mathcal{S}_i)_{i \in \NN}$ be a uniform sequence of computable metric spaces with uniformly bounded distances. If $K_i \subset X_i$ is uniformly recursively compact, then their product $\prod_{i \in \NN} K_i $ is recursively compact in $\prod_{i \in \NN} X_i$. 
\end{proposition}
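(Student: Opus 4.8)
The plan is to verify \Cref{def:recursive compactness} directly for $K=\prod_{i\in\NN}K_i$ inside $X=\prod_{i\in\NN}X_i$. Since compactness of $K$ is Tychonoff's theorem, all that must be produced is an algorithm that, on input a finite $I\subset\NN$, halts exactly when $K\subseteq U_I$. The structural fact driving everything is that the weighted metric $d=\sum_i 2^{-i}d_i$ with uniformly bounded $d_i\le M$ makes the tail coordinates negligible: two configurations agreeing up to $\delta$ on the first $N$ coordinates are at distance at most $2\delta+M2^{-N+1}$, regardless of how they differ afterwards. Thus any basic ball of $X$ can be certified to contain a ``box'' depending on only finitely many coordinates.

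Writing $U_I=\bigcup_{n\in I}B(c^{(n)},r^{(n)})$, for each $N$ and rational $\delta>0$ I would, using recursive compactness of the factors, search for finite families $\mathcal C_i$ ($i<N$) of radius-$\delta$ basic balls of $X_i$ with $K_i\subseteq\bigcup\mathcal C_i$. Choosing one ball $B(s_i,\delta)\in\mathcal C_i$ per coordinate produces a box $C=\prod_{i<N}B(s_i,\delta)\times\prod_{i\ge N}X_i$; finitely many boxes arise and together they cover $K$. For a box $C$ and an index $n$, the tail estimate yields the sufficient inclusion test $C\subseteq B(c^{(n)},r^{(n)})$ whenever $D_{C,n}+2\delta+M2^{-N+1}<r^{(n)}$, where $D_{C,n}=\sum_{i<N}2^{-i}d_i(s_i,c^{(n)}_i)$ is uniformly computable; this strict inequality between a computable real and a rational is semi-decidable. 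The algorithm dovetails over all $N$, $\delta$ and all such families, halting as soon as it meets one for which every box passes the test for some $n\in I$.

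Soundness is immediate: if the procedure halts, the boxes cover $K$ and each lies in some $B(c^{(n)},r^{(n)})\subseteq U_I$. The delicate direction is completeness. Assume $K\subseteq U_I$; if some $K_i$ is empty then $K=\emptyset$, detected in parallel by semi-deciding $K_i\subseteq U_{\emptyset}$, so suppose every $K_i\neq\emptyset$. The function $x\mapsto\max_{n\in I}\bigl(r^{(n)}-d(x,c^{(n)})\bigr)$ is continuous and strictly positive on $K$ (since $K\subseteq\bigcup_n B(c^{(n)},r^{(n)})$), so by compactness it has a positive minimum $\mu$. By density of $\mathcal S_i$ each $K_i$ can be covered by radius-$\delta$ balls whose centers lie within $\delta$ of $K_i$, hence meeting $K_i$; the search eventually enumerates such a family. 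For it every box $C$ meets $K$ at some point $x^{\ast}$ (here $K_i\neq\emptyset$ fills the tail), and picking $n$ with $d(x^{\ast},c^{(n)})\le r^{(n)}-\mu$ one computes $D_{C,n}\le d(x^{\ast},c^{(n)})+2\delta$, whence $D_{C,n}+2\delta+M2^{-N+1}\le r^{(n)}-\mu+4\delta+M2^{-N+1}$. Once $N$ is large and $\delta$ small enough that $4\delta+M2^{-N+1}<\mu$, the test passes for every box and the algorithm halts.

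I expect the crux to be precisely this completeness step: certifying the global inclusion from finite data while avoiding boxes that meet $K$ yet escape every single ball. The two ingredients that make it work are the bounded-distance tail estimate, which confines the certificate to finitely many coordinates, and the positive margin $\mu$ extracted from compactness together with the choice of factor covers by balls meeting each $K_i$, which guarantees that a sufficiently fine box decomposition has every box trapped inside one ball of the cover. The closure of effectively open sets under finite unions and intersections, and the uniformity of the factor data, ensure that all the intermediate predicates are uniformly semi-decidable, as required.
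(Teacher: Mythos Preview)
Your proof is correct. Both you and the paper exploit the same structural fact---that the weighted product metric makes coordinates $i\ge N$ contribute at most $M2^{-N+1}$, so only finitely many coordinates matter---but the executions differ. The paper gives a three-sentence sketch: since each basic ball in the product constrains only computably many coordinates, covering $\prod_i K_i$ by a finite list of balls ``boils down to'' checking, for finitely many $i$, whether a finite list of balls in $X_i$ covers $K_i$, which is semi-decidable by uniform recursive compactness. You instead give an explicit algorithm: build a fine box cover of $K$ from factor covers found via the recursive compactness of each $K_i$, then semi-decide for each box a computable sufficient condition for inclusion in one of the covering balls. Your Lebesgue-margin argument for completeness (the positive minimum $\mu$, the requirement that factor covers consist of balls meeting $K_i$, the separate treatment of empty factors) spells out details the paper's sketch leaves implicit. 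The paper's route is shorter and more conceptual; yours is longer but self-contained and leaves no doubt that the semi-decision procedure actually terminates when $K\subseteq U_I$.
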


\begin{proof} To see that $\prod_{i \in \NN} K_i$ is recursively compact, notice that the projection of a rational ball in the product space to coordinate $i$ contains the whole $X_i$ for all $i>N$ for some sufficiently large $N$ and we can compute $N$ from the radius of the ball. Thus, checking whether a finite list of balls covers $\prod_{i \in \NN} K_i$ boils down to check, for finitely many $i$'s, whether a finite list of balls in $X_i$ covers $K_i$, and this can be done by the uniform recursive compactness of the $K_i$. 
\end{proof}

\begin{remark} Note that the product topology on a uniform sequence of computable metric spaces has the property that the projection onto a uniform (in particular a finite) collection of coordinates is a computable function.
\end{remark}

We will make use of the following result, which is a computable version of the fact that in a Hausdorff space, a subset of a compact set is compact if and only if it is closed. 

\begin{proposition}
\label{prop.equivalence.comp.closed.subset}
Let $(X,d,\mathcal{S})$ be a recursively compact computable metric space. A subset $K \subset X$ is recursively compact if and only if it is effectively closed.
\end{proposition}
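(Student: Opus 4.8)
The plan is to prove both implications by translating the set containments into the semi-decidable covering condition of \Cref{def:recursive compactness}, exploiting throughout that the ambient space $X$ is itself recursively compact. Recall from the remark following \Cref{def:recursive compactness} that recursive compactness of a set lets us semi-decide its inclusion in \emph{any} effectively open set, not merely in finite unions of basic balls.

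For the forward direction, assume $K$ is recursively compact; I want to exhibit $X \setminus K$ as an effectively open set, namely as $\bigcup_{n \in I} B_n$ for a recursively enumerable index set
\[ I = \{ n \in \NN : \overline{B_n} \cap K = \emptyset \}. \]
The first observation is that the complement $X \setminus \overline{B(s^{(n)},r^{(n)})} = \{y : d(y,s^{(n)}) > r^{(n)}\}$ is effectively open uniformly in $n$: it is the union of those basic balls $B(s',r')$ with $d(s',s^{(n)}) > r^{(n)} + r'$, and since the distances between points of $\mathcal{S}$ are uniformly computable, this strict inequality is semi-decidable, so the corresponding index set is recursively enumerable uniformly in $n$. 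Now $\overline{B_n} \cap K = \emptyset$ is exactly the inclusion $K \subset X \setminus \overline{B_n}$, which by the remark above is semi-decidable using the recursive compactness of $K$; hence $I$ is recursively enumerable. It then remains to check that $\bigcup_{n \in I} B_n = X \setminus K$: the inclusion $\subseteq$ is immediate, and for $\supseteq$, if $x \notin K$ then, as $K$ is closed, some ball around $x$ misses $K$, so by density of $\mathcal{S}$ and the availability of rational radii one finds a basic ball $B_n$ with $x \in B_n$ and $\overline{B_n} \cap K = \emptyset$. This shows $X \setminus K$ is effectively open, i.e.\ $K$ is effectively closed.

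For the converse, assume $K$ is effectively closed and write $X \setminus K = U_J$ for a recursively enumerable $J$. Given a finite $I$, the key identity is
\[ K \subset U_I \iff U_I \cup U_J = X, \]
which holds because $K$ and $U_J$ partition $X$: if $K \subset U_I$ then $X = K \cup U_J \subset U_I \cup U_J$, while conversely if $U_I \cup U_J = X$ then every point of $K$, being outside $U_J$, must lie in $U_I$. Since $U_I \cup U_J = U_{I \cup J}$ with $I \cup J$ recursively enumerable, and $X$ is recursively compact, we can semi-decide $X \subset U_{I \cup J}$ by the remark above. Hence $K \subset U_I$ is semi-decidable, so $K$ is recursively compact.

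The substantive points are confined to the forward direction: verifying that the complement of a closed basic ball is effectively open uniformly (which rests on the semi-decidability of strict inequalities between computable reals) and the density argument showing the enumerated balls exhaust $X \setminus K$. The converse is essentially a reformulation once one notices the partition identity, with the recursive compactness of the ambient space doing the real work. I expect the main obstacle to be nothing deep, but rather the bookkeeping required to ensure that all the effective openness invoked is genuinely uniform in the relevant indices.
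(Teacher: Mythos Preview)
Your proof is correct and the converse direction is exactly the paper's argument. For the forward direction the paper proceeds dually: rather than enumerating single basic balls whose closed counterpart misses $K$, it enumerates finite index sets $F$ with $K\subset U_F$ (semi-decidable by recursive compactness of $K$) and for each such $F$ outputs the effectively open complement of the associated union of closed balls. Both routes rest on the same two ingredients---semi-deciding inclusions of $K$ into uniformly effectively open sets, and the effective openness of $\{y:d(y,s)>r\}$---so the difference is purely organizational; your version is marginally more direct.

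One small notational slip: you write $X\setminus\overline{B(s^{(n)},r^{(n)})}=\{y:d(y,s^{(n)})>r^{(n)}\}$, which identifies the closure of the open ball with the closed ball $\{y:d(y,s^{(n)})\le r^{(n)}\}$. These need not coincide in a general metric space, but your argument only ever uses the closed ball (which is exactly what the right-hand side describes), so nothing is lost---just replace ``$\overline{B_n}$'' by the closed ball throughout.
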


\begin{proof}
Assume $K$ is effectively closed. Then $X\setminus K$ is effectively open. Let $U$ be a finite union of open basic balls. To see that we can semi-decide whether $K\subset U$, simply note that $U$ covers $K$ exactly when $U \cup (X\setminus K)$ covers $X$, which can be semi-decided since $X$ is recursively compact. Conversely, if $K$ is recursively compact, then we can enumerate its complement by enumerating the complement of the closure of all the finite unions of basic balls that cover $K$. 
\end{proof}

\begin{remark}\label{rem_emptiness}
    An important consequence of the previous proposition is that in any recursively compact space $X$ there is an algorithm which, given as input a description of an effectively closed set $K$ and an effectively open set $U_I$, halts if and only if $K\subset U_I$. In particular, we can uniformly semi-decide if $K = \varnothing$. 
\end{remark}

We also note that if $f\colon X\to X'$ is a computable function between computable metric spaces, then the image of a recursively compact set is recursively compact uniformly in the description of the set. We end this section with the following two important propositions. 

\begin{proposition}
\label{prop.inverse.of-computable-function-on-compatc-space}
    Let $f\colon X \to Y$ be a function between computable metric spaces that is computable on a recursively compact set $K\subset X$. Then uniformly for any basic ball $B\subset X$ there exists an effectively open set $U_{B} \subset Y$ such that $f(B\cap K) = f(K) \cap U_B$. In particular, if $f$ is injective it has a computable inverse $f^{-1}\colon f(K)\to K$.  
\end{proposition}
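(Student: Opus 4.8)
The plan is to produce $U_B$ as the complement of an effectively closed set, namely the image of the part of $K$ lying outside $B$. Concretely, I would set
\[ U_B \isdef Y \setminus f(K\setminus B), \]
so that the proposition reduces to two claims: that $f(K\setminus B)$ is effectively closed uniformly in $B$, and that $f(B\cap K) = f(K)\cap U_B$.

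First I would check that $K\setminus B$ is recursively compact, uniformly in the index of the basic ball $B$. Since $B$ is a basic ball it is effectively open, so $X\setminus B$ is effectively closed, and $K\setminus B = K\cap (X\setminus B)$. To semi-decide an inclusion $K\setminus B\subset U_I$ for a finite union $U_I$ of basic balls, observe that it is equivalent to $K\subset U_I\cup B$; the right-hand side is effectively open, so this is semi-decidable by the recursive compactness of $K$, uniformly in $B$. Hence $K\setminus B$ is recursively compact uniformly in $B$. Next, because $f$ is computable on $K$ and $K\setminus B\subset K$, the same argument showing that computable maps send recursively compact sets to recursively compact sets (uniformly in a description of the set) gives that $f(K\setminus B)$ is recursively compact, again uniformly in $B$. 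Finally, the easy direction of \Cref{prop.equivalence.comp.closed.subset} — whose proof does not use recursive compactness of the ambient space — shows that a recursively compact set is effectively closed, uniformly; thus $U_B = Y\setminus f(K\setminus B)$ is effectively open, uniformly in $B$.

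It remains to verify the set identity $f(B\cap K) = f(K)\setminus f(K\setminus B)$, equivalently $f(K) = f(B\cap K)\cup f(K\setminus B)$ with the union disjoint. This is the crux, and it is where the behaviour of the fibres of $f\res K$ matters: the sets $B\cap K$ and $K\setminus B$ partition $K$, and if no fibre of $f\res K$ meets both of them, their images are disjoint and cover $f(K)$, giving $f(B\cap K) = f(K)\setminus f(K\setminus B) = f(K)\cap U_B$. (In general only the inclusion $f(K)\setminus f(K\setminus B)\subset f(B\cap K)$ survives; the reverse inclusion, hence the effective openness of $f(B\cap K)$ in $f(K)$, is exactly what injectivity of $f$ on $K$ guarantees, since then every fibre is a single point.) I expect this identity to be the main obstacle to pin down cleanly, whereas the computability steps above are routine consequences of the earlier results.

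For the final assertion, suppose $f$ is injective. Then $f\res K\colon K\to f(K)$ is a continuous bijection from the compact set $K$ onto the Hausdorff space $f(K)$, hence a homeomorphism, so $f^{-1}\colon f(K)\to K$ is well defined and continuous. To see it is computable I would simply unwind the definition: for a basic ball $B$ of $X$ one has $(f^{-1})^{-1}(B) = f(B\cap K) = f(K)\cap U_B$, and by the first part the sets $U_B$ are uniformly effectively open in $B$. This is precisely the condition that $f^{-1}$ be computable on $f(K)$, completing the proof.
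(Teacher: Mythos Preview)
Your approach is exactly the paper's: it too sets $U_B = Y\setminus f(K\setminus B)$, observes that $K\setminus B$ is recursively compact (uniformly in $B$), pushes it forward under $f$, and takes the complement. The paper's proof is in fact terser than yours and simply asserts that this $U_B$ ``satisfies the required property'' without checking the set identity.

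You are right to flag the set identity as the delicate point, and your analysis of it is sharper than the paper's. The equality $f(B\cap K)=f(K)\setminus f(K\setminus B)$ does require that no fibre of $f\res K$ meet both $B\cap K$ and $K\setminus B$; in general only the inclusion $f(K)\setminus f(K\setminus B)\subset f(B\cap K)$ holds, and one can build easy counterexamples where $f(B\cap K)$ is not even open in $f(K)$ (e.g.\ project an L-shaped set in $\RR^2$ onto a coordinate axis and take $B$ a small ball on the vertical leg). So the first sentence of the proposition, read literally without injectivity, is not correct as stated, and the paper's one-line verification does not address this. Your write-up handles the injective case cleanly, which is the case actually needed for the ``in particular'' clause and for the applications in the paper where a genuine inverse is constructed.
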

\begin{proof}
 Let $B$ be a basic ball. We show how to uniformly compute an effectively open $U_B\subset Y$ such that $f(B\cap K) = U_B \cap f(K)$.  As $K$ is recursively compact, we have that $K\setminus B$ is recursively compact, and thus $f(K\setminus B)$ is recursively compact in $Y$ uniformly on $B$. The set $U_B=Y\setminus f(K\setminus B)$ is effectively open and satisfies the required property. 
\end{proof}



\begin{proposition}\label{prop:computability_of_fixed_points}
    Let $f\colon K\to K$ be a computable function on an effectively closed subset of a computable metric space $X$. Then the set
    \[\{x\in K : f(x)=x\}\]
    is effectively closed.
\end{proposition}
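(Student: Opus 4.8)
The plan is to show that the complement of the fixed point set is effectively open. Write $F=\{x\in K : f(x)=x\}$. Since
\[ X\setminus F = (X\setminus K)\,\cup\,\{x\in K : f(x)\neq x\}, \]
and $X\setminus K$ is effectively open by hypothesis, it suffices to produce an effectively open set $W\subset X$ with $W\cap K=\{x\in K : f(x)\neq x\}$. Indeed, the union of two effectively open sets is effectively open, and $(X\setminus K)\cup W = X\setminus F$ because $W\cap(X\setminus K)\subset X\setminus K$, so $(X\setminus K)\cup W = (X\setminus K)\cup(W\cap K)$. Thus the whole task reduces to showing that the \emph{non-fixed} set is effectively open relative to $K$.

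The guiding idea is that $f(x)\neq x$ is exactly the condition that $x$ and $f(x)$ can be separated by two disjoint basic balls. Call a pair of indices $(n,m)$ \emph{separated} if $d(s^{(n)},s^{(m)})>r^{(n)}+r^{(m)}$, and let $R$ be the set of all separated pairs. Since the distances $d(s^{(n)},s^{(m)})$ are uniformly computable and $r^{(n)},r^{(m)}$ are rational, this strict inequality is semi-decidable, so $R$ is recursively enumerable. Because $f$ is computable on $K$, for each $m$ I can enumerate a set $I_m$ with $f^{-1}(B_m)=U_{I_m}\cap K$, uniformly in $m$. I then set
\[ W = \bigcup_{(n,m)\in R} \big(B_n\cap U_{I_m}\big). \]
As $R$ is recursively enumerable and each $B_n\cap U_{I_m}$ is effectively open uniformly in $(n,m)$, the set $W$ is effectively open. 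Conceptually this is just the statement that the complement of the diagonal is effectively open in $X\times X$, pulled back along the computable map $x\mapsto(x,f(x))$ defined on $K$.

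It remains to verify that $W\cap K=\{x\in K: f(x)\neq x\}$, which is where two triangle-inequality estimates enter. If $x\in K$ lies in $W$, then $x\in B_n$ and $f(x)\in B_m$ for some separated pair, so
\[ d(x,f(x)) \geq d(s^{(n)},s^{(m)}) - d(s^{(n)},x) - d(f(x),s^{(m)}) > d(s^{(n)},s^{(m)}) - r^{(n)} - r^{(m)} > 0, \]
hence $f(x)\neq x$. Conversely, if $f(x)\neq x$, set $\delta=d(x,f(x))>0$; using the density of $\mathcal{S}$ and the availability of arbitrarily small rational radii, I can choose basic balls $B_n\ni x$ and $B_m\ni f(x)$ of radius less than $\delta/4$, and then the triangle inequality forces $d(s^{(n)},s^{(m)})>\delta/2>r^{(n)}+r^{(m)}$, so $(n,m)\in R$ and $x\in W$. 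Combining this with the reduction of the first paragraph shows that $X\setminus F$ is effectively open, i.e. $F$ is effectively closed. The only genuinely delicate points are bookkeeping ones: confirming that $R$ is recursively enumerable (semi-decidability of a strict inequality between a computable real and a rational) and that the converse direction can always locate separating balls, which rests only on density and on rational radii being arbitrarily small. Note that $X$ is not assumed recursively compact, and indeed the hypothesis on $X$ is used only through the effective closedness of $K$.
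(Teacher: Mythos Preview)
Your proof is correct and follows essentially the same idea as the paper: show that the complement of the fixed-point set decomposes as $(X\setminus K)$ together with the set where $f(x)\neq x$, and verify the latter is effectively open relative to $K$. The paper packages this last step more compactly by observing that $x\mapsto d(x,f(x))$ is computable on $K$ and that the non-fixed set is the preimage of $(0,\infty)$; your argument unpacks exactly this into explicit separated pairs of basic balls, which is the same mechanism one level lower in abstraction.
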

\begin{proof} Simply note that the function $F(x) = d(x,f(x))$ is computable on $K$, and therefore 
    \[
    X\setminus \{x\in K : f(x)=x\} = (X\setminus K) \cup F^{-1}(\{r\in \RR: r>0\}) 
    \]
    is an effectively open set. 
\end{proof}

\subsection{Computability for zero-dimensional spaces}

Another interesting application of \Cref{prop.equivalence.comp.closed.subset} is the construction of a basis of clopen sets for any given recursively compact zero-dimensional subset of a computable metric space. 

\begin{proposition}\label{prop.clopens}
    Let $K$ be a zero-dimensional recursively compact subset of a computable metric space $X$. Then one can uniformly compute a collection $(C_n)_{n \in \NN}$ of finite unions of basic balls in $X$ such that $(C_n\cap K)_{n \in \NN}$ forms a basis of clopen sets for $K$. 
\end{proposition}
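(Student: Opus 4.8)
The plan is to computably enumerate \emph{all} finite unions of basic balls of $X$ and to keep exactly those whose topological boundary does not meet $K$; these will trace clopen sets on $K$, and they will turn out to be cofinal enough to form a basis. Concretely, the basic balls $(B_n)_{n\in\NN}$ are computably enumerated, so the finite unions $C_F=\bigcup_{n\in F}B_n$ indexed by finite sets $F\subset\NN$ form a computable list. For such a $C=C_F$ I would write $\overline{C}=\bigcup_{n\in F}\overline{B}(s^{(n)},r^{(n)})$ for the union of the corresponding closed balls; since the radii are rational, this is a finite union of effectively closed sets (\Cref{ex:closed rational ball}), hence effectively closed, uniformly in $F$. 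The selection criterion is the condition $K\cap(\overline{C}\setminus C)=\varnothing$.

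First I would check that this criterion is semi-decidable uniformly in $F$. Note that $K\cap(\overline{C}\setminus C)=\varnothing$ is equivalent to $K\subseteq C\cup(X\setminus\overline{C})$, and the right-hand side is an effectively open set (a finite union of basic balls together with the effectively open complement of $\overline{C}$), obtained uniformly from $F$. By recursive compactness of $K$ this inclusion is semi-decidable (see the remark following \Cref{def:recursive compactness}), so running all these semi-decision procedures in parallel produces the desired computable sequence $(C_n)_{n\in\NN}$ of finite unions of basic balls passing the test. Moreover, each retained $C$ does trace a clopen set: $C\cap K$ is open in $K$, while the criterion gives $\overline{C}\cap K\subseteq C$, whence $\overline{C}\cap K= C\cap K$ is also closed in $K$, so $C\cap K$ is clopen.

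The substantive point is that the retained sets form a \emph{basis}, i.e.\ that clopen sets of this special form are cofinal. Fix $x\in K$ and $\epsilon>0$. Using that $K$ is zero-dimensional I would pick a set $D$ that is clopen in $K$ with $x\in D\subseteq B(x,\epsilon)\cap K$; since $D$ and $K\setminus D$ are disjoint compacta, $\delta\isdef d(D,K\setminus D)>0$. Now cover the compact set $D$ by finitely many basic balls $B(s_j,r_j)$, each of radius $r_j<\delta/4$ and each containing a point $z_j\in D$, and set $C\isdef\bigcup_j B(s_j,r_j)$. Then $D\subseteq C$, while every $y\in\overline{C}$ satisfies $d(y,z_j)\leq d(y,s_j)+d(s_j,z_j)<2r_j<\delta/2$ for some $j$, so $d(y,D)<\delta/2$; hence $\overline{C}\cap K\subseteq D$, because points of $K\setminus D$ are at distance at least $\delta$ from $D$. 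This forces $\overline{C}\cap K=C\cap K=D$, so $K\cap(\overline{C}\setminus C)=\varnothing$ and $C$ is one of the enumerated sets, with $C\cap K=D$ a clopen neighbourhood of $x$ contained in $B(x,\epsilon)$. Thus $(C_n\cap K)_{n\in\NN}$ is a basis of clopen sets for $K$.

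The main obstacle is precisely this last cofinality argument: translating the abstract clopen set $D$ supplied by zero-dimensionality into a finite union of basic balls whose boundary avoids $K$, which is what makes $D$ detectable by the semi-decision procedure. I would emphasize that only the recursive compactness of $K$ is used (not that of the ambient $X$), through the two facts that inclusions of $K$ into effectively open sets are semi-decidable and that finite unions of closed basic balls are effectively closed.
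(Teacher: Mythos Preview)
Your proof is correct and follows essentially the same route as the paper: enumerate all finite unions $C$ of basic balls and use recursive compactness of $K$ to semi-decide the condition $K\subseteq C\cup(X\setminus\overline{C})$ (equivalently $\overline{C}\cap K\subseteq C$), which guarantees that $C\cap K$ is clopen. Your cofinality argument via $\delta=d(D,K\setminus D)$ is in fact more explicit than the paper's treatment, since the semi-decided condition is strictly stronger than ``$V\cap K$ is clopen'' and so one really does need to verify---as you do---that every clopen $D\subseteq K$ arises from some $C$ passing the test.
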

\begin{proof}
    Since $K$ is zero-dimensional, it has a basis of clopen sets. Let $C$ be clopen in $K$. Since $C$ is open in $K$, there exists a set $U$ open in $X$ such that $C = U\cap K$. Since $C$ is also closed and $K$ is compact, $C$ is compact as well, and thus the set $U$ can be taken to be a finite union of basic balls. Now consider the collection $V_1, V_2, \dots$ of all finite unions of basic balls. By what we have just shown we know that $(V_i\cap K)_{i \in \NN}$ contains a basis of clopen sets for $K$. We claim that we can uniformly semi-decide, given a finite union $V$, whether $C = V \cap K$ is clopen in $K$. Indeed, note that if $C$ is a clopen set in $K$, then one has  
    \[ C = \overline{C} = \overline{V}\cap K\]
where $\overline{V}$ is the union of the closures of the finitely many balls defining $V$. This is equivalent to have 
\[\left(K\setminus C\right)  \subset X\setminus \overline{V}.\] 

But $K\setminus C = K\setminus V$ is effectively closed, and therefore recursively compact by \Cref{prop.equivalence.comp.closed.subset}. Since $X\setminus\overline{V}$ is an effectively open set, it follows that this last relation is semi-decidable.  
\end{proof}

It is a well known result of Brouwer that every zero-dimensional compact space without isolated points is homeomorphic to the Cantor space $\{\symb{0},\symb{1}\}^\N$. Here we will prove an effective version of this result. To state it we need the following notion of computability for closed sets. 

\begin{definition}
    A closed set $X$ is \define{recursively enumerable (r.e.)} if one can uniformly compute a sequence $(x_i)_{i \in \NN} \subset X$ of points that is dense in $X$. If $X$ is both effectively closed and r.e. then we say it is \define{computably closed}. 
\end{definition}

\begin{remark}\label{rem:comp-funct-send-re-to-re}
    We note that since computable functions send uniformly computable sequences of points to uniform computable sequences of points, it follows that the image of a closed r.e. set by a computable function is also closed r.e. 
\end{remark}

 \begin{theorem}\label{thm:compacto-y-cero-dimensional-tiene-copia-en-el-cantor} 
        Let $X$ be a nonempty recursively compact zero-dimensional subset of a computable metric space. Then $X$ is computably homeomorphic to an effectively closed subset $E$ of $\{\symb{0},\symb{1}\}^\N$. Moreover, 
        \begin{itemize}
            \item if $X$ is computably closed, then $E$ can be taken to be computably closed,
            \item if $X$ is computably closed and has no isolated points, then $E$ can be taken to be $\{\symb{0},\symb{1}\}^\N$.
        \end{itemize} 
    \end{theorem}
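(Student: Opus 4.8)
The plan is to realize the homeomorphism through the clopen basis produced by \Cref{prop.clopens}. Fix a computable enumeration $(C_n)_{n\in\NN}$ of finite unions of basic balls such that $(C_n\cap X)_{n\in\NN}$ is a clopen basis of $X$, and define $\phi\colon X\to\{0,1\}^{\NN}$ by setting $\phi(x)_n=1$ when $x\in C_n$ and $\phi(x)_n=0$ otherwise; then put $E=\phi(X)$. First I would check that $\phi$ is computable. For this it suffices to verify that both $C_n\cap X$ and its complement $X\setminus(C_n\cap X)$ are effectively open in $X$, uniformly in $n$: the former is a finite union of basic balls intersected with $X$, and for the latter I would use, exactly as in the proof of \Cref{prop.clopens}, that clopenness gives $C_n\cap X=\overline{C_n}\cap X$ with $\overline{C_n}$ the union of the closed balls defining $C_n$, so that $X\setminus(C_n\cap X)=X\cap(X\setminus\overline{C_n})$ is effectively open by \Cref{ex:closed rational ball}. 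Since the preimage under $\phi$ of any cylinder is a finite intersection of sets of these two kinds, it is uniformly effectively open, and $\phi$ is computable. Injectivity is immediate from Hausdorffness together with the basis property: distinct $x\neq y$ are separated by some $C_n\cap X$, whence $\phi(x)_n\neq\phi(y)_n$.

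Next I would identify $E$ and invert $\phi$. Since $\phi$ is computable and $X$ is recursively compact, the image $E=\phi(X)$ is recursively compact in the recursively compact space $\{0,1\}^{\NN}$, hence effectively closed by \Cref{prop.equivalence.comp.closed.subset}. Injectivity and recursive compactness of $X$ then let me apply \Cref{prop.inverse.of-computable-function-on-compatc-space} to conclude that $\phi^{-1}\colon E\to X$ is computable; thus $\phi$ is a computable homeomorphism onto the effectively closed set $E$, which proves the first assertion. For the first bullet, if $X$ is computably closed it comes with a uniformly computable dense sequence $(x_i)_{i\in\NN}$; then $(\phi(x_i))_{i\in\NN}$ is a uniformly computable sequence, dense in $E$ by continuity and surjectivity of $\phi\colon X\to E$, so $E$ is r.e.\ and therefore computably closed.

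The hard part will be the second bullet, the effective version of Brouwer's theorem. Here I would exploit that computable closedness together with the absence of isolated points makes emptiness of clopen sets \emph{decidable}, not merely semi-decidable: for a clopen set of the form $D\cap X$ with $D$ a finite Boolean combination of the $C_n$, recursive compactness semi-decides emptiness via \Cref{rem_emptiness}, while the dense sequence semi-decides nonemptiness by searching for some $x_i\in D$, and running both in parallel decides it. With this decision procedure I would build, by recursion on $u\in\{0,1\}^{*}$, a family of nonempty clopen sets $K_u$ with $K_\varepsilon=X$, $K_u=K_{u0}\sqcup K_{u1}$, and $\operatorname{diam}(K_u)\to 0$: to split a nonempty clopen $K_u$ I search for the least $n$ with $K_u\cap C_n$ and $K_u\setminus C_n$ both nonempty, such $n$ existing because $K_u$, having no isolated points, contains two points separated by a basis element, and to force diameters to zero I interleave these splittings with refinements against a precomputed sequence of finite clopen partitions of mesh tending to $0$, obtained from small-radius basic balls and recursive compactness. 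The map sending $x$ to its address in this binary tree is then a computable homeomorphism, and it is onto all of $\{0,1\}^{\NN}$ precisely because every $K_u$ is nonempty, so every branch has nonempty (hence singleton, by shrinking diameter) intersection.

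I expect the only real subtlety to be the simultaneous bookkeeping that keeps the tree binary, keeps every node nonempty, and drives the mesh to zero at once; everything else reduces to the decidability of emptiness established above and to the computability and inversion arguments already used in the first part.
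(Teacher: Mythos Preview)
Your argument is correct and takes a genuinely different route from the paper's, particularly in the first part. The paper builds a refining sequence of clopen partitions $(\mathcal{P}^n)_n$, encodes $X$ into a tree $Y\subset\prod_n B_n$ of nested partition elements, and then separately constructs a computable homeomorphism $\prod_n B_n\to\{\symb{0},\symb{1}\}^{\NN}$ via a prefix-code trick. Your characteristic-function map $\phi(x)_n=\mathbb{1}[x\in C_n]$ relative to the clopen basis is more direct: it collapses these two stages into one, and the appeal to \Cref{prop.inverse.of-computable-function-on-compatc-space} replaces the paper's explicit description of the inverse. This is shorter and cleaner; the paper's partition-tree construction, on the other hand, is more structured and feeds naturally into the perfect case.

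For the perfect case, both proofs rest on the same observation---that computable closedness upgrades semi-decidable emptiness of clopens to decidable emptiness---but exploit it differently. The paper prunes the partition sequence so that every element meets $X$ and splits into at least two children, making the tree of nonempty nodes itself computable; the prefix encoding then automatically surjects onto $\{\symb{0},\symb{1}\}^{\NN}$. You instead grow a binary tree of nonempty clopens node by node. Your approach is sound, and you correctly flag the one place requiring care: guaranteeing diameters shrink while keeping every split genuinely binary. One clean way to do this (closer to the paper's spirit) is to process level $n$ by first intersecting each $K_u$ with the atoms of a fixed clopen partition of mesh $\le 2^{-n}$, discarding empty pieces, and then recursively bisecting until each node has exactly two nonempty children---all steps being decidable. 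Either way the construction goes through.
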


    \begin{proof}
      If $X$ is a singleton, then there is nothing to prove. We assume therefore that $X$ contains at least two points. We claim that one can uniformly compute a sequence $(\mathcal P^n)_{n\in\N}$ of partitions of $X$ made by effectively closed clopen sets, where each $\mathcal P^n$ is indexed as $\{P^n_0,\dots,P^n_{k_n}\}$, such that each set in $\mathcal P^n$ has diameter at most $2^{-n}$, and such that the elements of $\mathcal P^n$ are unions of elements in $\mathcal P^{n+1}$. Indeed, it suffices to observe that, given any $n\in\NN$, by using \Cref{prop.clopens}, one can uniformly compute a basis of clopen sets whose diameter is less than $2^{-n}$ (recall that from the index of a basic ball one can compute its radius). Now, given any clopen and effectively closed set, call it $C$, we can use recursive compactness of $C$ (see \Cref{prop.equivalence.comp.closed.subset}) to find a finite collection of basic clopen sets of small diameter that covers $C$. By iteratively applying this observation, we can compute the sequence of partitions  $(\mathcal P^n)_{n\in\N}$ as needed. Moreover, by our assumption on $X$ we can assume that $\mathcal{P}^0$ has more than one element. Now, for each $n\in\N$ we let $B_n=\{0,\dots,k_n\}$. It follows that $B_n$ has at least two elements. As $(k_n)_{n\in\N}$ is a computable sequence of natural numbers, it follows that $\prod_{n \in \NN} B_n$ is an effectively closed and recursively compact subset of $\N^\N$ by~\Cref{thm:Tychonoff_computable}. We now consider the following subset of $\prod_{n \in \NN} B_n$:
        \[Y=\{y\in\prod_{n \in \NN} B_n : \text{ for all } n\in\N, P^n_{y(n)}\cap X\neq\varnothing \text{ and } P^n_{y(n)}\supset P^{n+1}_{y(n+1)}\}. \] 
        
        Observe that $Y$ is effectively closed. Indeed, we can semi-decide whether $P^n_{y(n)}\cap X=\varnothing$ by the recursive compactness of $X$. The condition $P^k_{y(k)}\supset P^{k+1}_{y(k+1)}$ is decidable by construction. As $\prod_{n \in \NN} B_n$ is a recursively compact subset of $\N^\N$, it follows that $Y$ is recursively compact as well by \Cref{prop.equivalence.comp.closed.subset}. 

        We now observe that $Y$ and $X$ are computably homeomorphic. We define the function  $\psi\colon Y\to X$ by the following equality:
        \[\{\psi (y)\}=\bigcap_{n\in\N} P^n_{y(n)}.\]
        This function is well defined, as the intersection of a nested sequence of compact sets with diameters tending to zero must be a singleton. It is clear from the expression above that $\psi$ is continuous, computable and bijective. It follows from \Cref{prop.inverse.of-computable-function-on-compatc-space} that the inverse of $\psi$ is computable, and thus it is a computable homeomorphism.

        We shall now prove that $\prod_{n \in \NN} B_n$ is recursively homeomorphic to $\{\symb{0},\symb{1}\}^\N$. For this we take $T\subset\N^*$ as the set of words $\{\varepsilon\} \cup \bigcup_{n\in\N}B_0\times\dots \times B_n$. Then $T$ is an infinite tree (a set of words closed by prefix) with the property that each word can be extended by at least two different words on the right. Furthermore, $T$ is a decidable subset of $\N^*$ and $\prod_{n \in \NN} B_n$ can be identified with the set of infinite rooted paths in this tree. 

        
        We define a function $h\colon T\to \{\symb{0},\symb{1}\}^\ast$ in the following recursive manner. First, $h(\epsilon)=\epsilon$. Now let us assume that $h(w)$ has been defined for some $w\in T$. Then we compute the set of words $\{w_0,\dots,w_m\}$ which have $w$ as prefix, and whose length is equal to the length of $w$ plus one, labeled in lexicographical order. As each word in $T$ can be extended in two ways on the right, this set is nonempty and $m \geq 1$. Let
        \[v_0=\symb{0},v_1=\symb{10},v_2=\symb{110},\dots,v_{m-1} = \symb{1}^{m-1}\symb{0}, v_m=\symb{1}^{m},\]
        and define $h(w_i)=h(w)v_i$, for $0\leq i\leq m$. These conditions define $h(w)$ for every $w\in T$. From our construction it is clear that $h$ is a computable function, it is monotone for the prefix order, and the length of $h(w)$ tends to infinity with the length of $w$. Thus $h$ induces a function
        \[H\colon \prod_{n \in \NN} B_n\to \{\symb{0},\symb{1}\}^\N\]
        Given by $H(y)(n)=h(y_0\dots y_k)(n)$, the $n$-th element in the word $h(y_0\dots y_k)$, for some $k$ big enough and computable from $n$. A routine verification shows that $H$ is a computable homeomorphism between $\prod_{n \in \NN} B_n$ and $\{\symb{0},\symb{1}\}^\N$.

        The fact that $X$ is computably homeomorphic to an effectively closed subset $E$ of $\{\symb{0},\symb{1}\}^\N$ follows by composing the computable homeomorphism between $X$ and $Y\subset \prod_{n \in \NN} B_n$, and the computable homeomorphism between $\prod_{n \in \NN} B_n$ and  $\{\symb{0},\symb{1}\}^\N$. Computable homeomorphisms preserve the property of being effectively closed, so $E$ will be effectively closed when $X$ is effectively closed. The same holds for the property of being r.e. closed by~\Cref{rem:comp-funct-send-re-to-re}. 

        We now prove that if $X$ is computably closed, nonempty, and has no isolated points, then it is computably homeomorphic to $\{\symb{0},\symb{1}\}^\N$. For this purpose we make some modifications to the proof given above. We must add some conditions to the sequence $(\mathcal P^n)_{n\in\N}$. The first extra condition is that for every $n\in\N$, every element in $\mathcal P^n$ intersects $X$. As $X$ is computably closed, we can decide which elements from $\mathcal P^n$ have empty intersection with $X$, and discard those elements. The second extra condition is that for every $n\in\N$ every element in $\mathcal P^n$ contains at least two elements in $\mathcal{P}^{n+1}$. This can be obtained by computably taking a sub-sequence of the original sequence of partitions. The existence being assured by the fact that $X$ has no isolated points.

        After these considerations, the set $Y$ is defined in the same manner. Observe that being a computable homeomorphic image of a computably closed set $X$, $Y$ is computably closed. Now let $T$ be the set of words (including the empty word) $w\in \N^\ast$ such that the cylinder $[w]\subset\N^\N$ has nonempty intersection with $Y$. As $Y$ is computably closed, is follows that $T$ is a computable tree. Observe that from our choice of $\mathcal{P}^n$ every element in the tree has at least two children. Then we just repeat the construction of $h$ given above on the tree $T$. The function $H$ constructed before induces now a computable homeomorphism between $Y$ and $\{\symb{0},\symb{1}\}^\N$.\end{proof}

\subsection{Effective subshifts}\label{computability on shift spaces}
Let us recall that a subshift $X\subset A^\Gamma$ is a closed subset which is invariant by the shift action. A subshift $X\subset A^{\ZZ}$ is said to be effective when it is an effectively closed subset of $A^\Z$, that is, when the set of all words not appearing in the subshift is recursively enumerable. This notion has been extensively used in the literature  \cite{Cenzer2008,Hochman2009b,AubrunSablik2010,DurandRomashchenkoShen2010}. Our goal in this section is to generalize this notion from $\Z$ to an arbitrary finitely generated group $\Gamma$. When $\Gamma$ has decidable word problem, there is a canonical way to endow the full $\Gamma$-shift $A^\Gamma$ with a computable metric structure such that one can speak about effectively closed sets (and therefore effective subshifts). However, as we will see, for more general groups the task is less straightforward.


\subsubsection{Computable structure on $A^\Gamma$ for a group with decidable word problem}

Let $\Gamma$ be a finitely generated group with decidable word problem. Then there is a bijection (see~\Cref{rem:decidablewp_isnamed}) $\nu\colon\N\to \Gamma$ for which the pullback of the group operation to $\N^2\to \N$ becomes a computable function. We now introduce a computable metric space structure on $A^\Gamma$. For this purpose we consider the function  
\[\psi\colon A^\N\to A^\Gamma\]
\[(x_i)_{i\in\N}\mapsto (x_{\nu^{-1}(g)})_{g\in\Gamma}\]
which is in fact an homeomorphism, and let $\mathcal S=\{\psi(w\symb{a}^\infty) : w\in A^\ast, \symb{a}\in A \}$ to be the collection of basic points in $A^\Gamma$. The distance $d$ on $A^\Gamma$ obtained by declaring $\psi$ to be an isometry generates the prodiscrete topology. In this way, $\mathcal{S}$ becomes a dense subset of $A^\Gamma$ on which the distance $d$ is clearly computable. 

\begin{remark}
We note that this computable metric structure for $A^\Gamma$ makes the shift action $\Gamma\curvearrowright A^\Gamma$ computable. In order to see this, let $g\in \Gamma$, and let $f_g\colon A^\Gamma\to A^\Gamma$ be defined by $f_g(x)=gx$. To show that $f_g$ is computable, it is enough to note that $\psi^{-1} f_g\circ \psi\colon A^\N \to A^\N$ is computable. Indeed, this function can be written as $(x_i)_{i\in\N}\to (x_{h(i)})_{i\in\N}$, where $h$ is a computable bijection of $\N$ which comes from the computability of the group operation. 
\end{remark}

\begin{remark}
The computable metric space structure produced in this way for $A^\Gamma$ is inherent to $\Gamma$ and does not depend on the choice of $\nu$. Indeed, let  $\nu'$ be another bijection  $\N\to \Gamma$ as before, and let us denote by $(A^\Gamma,d,\mathcal S)$, $(A^\Gamma,d',\mathcal S')$ the computable metric space structures associated to $\nu$ and $\nu'$. Then $\operatorname{id}\colon (A^\Gamma,d,\mathcal S)\to (A^\Gamma,d',\mathcal S')$ is a computable homeomorphism. This can be proved by noting that  $\nu ^{-1} \circ \nu' $ is a computable bijection of $\N$. For details, the reader is referred to \cite[Section 2.4 and Section 5]{Carrasco_nicanor_subgroup_membership_2023}).
\end{remark}

Once $A^\Gamma$ has been endowed with its canonical computable structure, one can naturally extend the definition of effective subshift for $A^\Z$ to make sense in $A^\Gamma$ by simply requiring the subshift to be effectively closed. Note that, by definition, a set $X\subset A^\Gamma$ is effectively closed if and only if the set of patterns $p$  such that $[p]\cap X=\varnothing$ is recursively enumerable. 

We now present a construction that allows us to have a useful notion of effective subshift when $\Gamma$ is an arbitrary finitely generated group.


\subsubsection{Computable structure on $A^\Gamma$, for arbitrary finitely generated groups}

We shall prove later that for some groups, the space $A^\Gamma$ can \emph{not} be endowed with a computable metric space structure for which the action by translations $\Gamma\curvearrowright A^\Gamma$ is computable. However, the set $A^\Gamma$ can always be identified with a closed subset of $A^{F}$ for a suitable free group $F$. The idea is to use the fact that finitely generated free groups have decidable word problem, which as we have just seen allows us to endow $A^F$ with a canonical computable structure. Now to the details. 

Let $\Gamma$ be a finitely generated group, which is not assumed to have decidable word problem. Let $S$ be a finite set of generators of the group $\Gamma$. Consider the free group $F(S)$ generated by $S$, and consider the canonical epimorphism $\phi\colon F(S)\to \Gamma$, where every reduced word on $S$ is mapped to its corresponding element in $\Gamma$. Let $\widehat{\phi} \colon A^\Gamma \to A^{F(S)}$ be the map given by $\widehat{\phi}(x)(w) = x(\phi(w))$ for every $w \in F(S)$. 

Every subshift $X\subset A^{\Gamma}$ induces a \define{pullback subshift} $\widehat{X}\subset A^{F(S)}$ given by
\[\widehat{X} = \{ \widehat{\phi}(x) \in A^{F(S)} : x \in A^{\Gamma}\}.\]


\begin{definition}\label{def:effective_subshift_through_free_group}
    We say that a subshift $X\subset A^{\Gamma}$ is \define{effective} if for some finite set $S$ of generators of $\Gamma$ the pullback subshift $\widehat{X}\subset A^{F(S)}$ is an effectively closed subset of $A^{F(S)}$.
\end{definition}

It can be seen that this definition does not depend on the set of generators $S$. Indeed, let $S'$ be another set of generators and denote by $\widehat{X}'$ its pullback in $A^{F(S')}$. For each $s \in S$, let $\psi(s)$ be a word in $S'$ such that $s$ is equal to $\psi(s)$ in $\Gamma$. Then $\psi$ extends to a computable injective homomorphism from $F(S)$ to $F(S')$. It follows that the map $\Psi\colon \widehat{X}\to \widehat{X}'$ given by $\Psi(x)(w) = x(\psi(w))$ is a computable homeomorphism. In particular, $\widehat{X}'$ is also effectively closed. A similar argument shows that for a group with decidable word problem, $\widehat{X}\subset A^{F(S)}$ is effectively closed exactly when  $X\subset A^\Gamma$ is effectively closed, as $\widehat{A^{\Gamma}}$ is then effectively closed in $A^{F(S)}$.

\begin{remark}
In the literature~\cite{ABS2017} there is a different computability notion for shift spaces (\Cref{def:ECP}). While both notions coincide for recursively presented groups (\Cref{cor:ECP_plus_RP_implies_EDS}), it turns out that for general finitely generated groups the two definitions are not equivalent. In~\Cref{sec:computability-on-shift-spaces} we shall provide a characterization of this other notion and clarify the relation with ours. 
\end{remark}

\begin{example}\label{example:fullshift_RP_is_effective}
    If $\Gamma$ is recursively presented, then the full shift $A^{\Gamma}$ is effective. Indeed, as the word problem of $\Gamma$ is recursively enumerable, the set of patterns of the form $p:\{1_{F(S)},w\}\to A $ satisfying $\underline{w}=1_{\Gamma}$ and $p(1_{F(S)})\ne p(w)$ is recursively enumerable. The union of cylinders associated to these patterns in $A^{F(S)}$ is equal to the complement of $\widehat{A^\Gamma}$.
    \end{example}

\begin{remark}\label{rem.effective.subshift.is.EDS}
    Observe that the map $\widehat{\phi}$ is a homeomorphism between $X$ and $\widehat X$. Moreover, we can define an action $\Gamma\curvearrowright \widehat {X}$ by setting $gx=wx$, where $w\in F(S)$ is any element satisfying $\phi(w)=g$. In this manner the actions $\Gamma\curvearrowright \widehat{X}$ and $\Gamma\curvearrowright X$  are topologically conjugate. In particular, we have that if $X$ is an effective subshift, then $\Gamma\curvearrowright X$ is topologically conjugate to a computable action $\Gamma \curvearrowright \widehat{X}$ where $\widehat{X}$ is an recursively compact subset of a computable metric space. This is the property that will allow us to generalize the notion of effective subshift to general dynamical systems. 
\end{remark}

\subsection{Effective dynamical systems}\label{sec.eds}

Let $X$ be a recursively compact subset of a computable metric space. Similarly to the zero-dimensional case, we say that $\Gamma \curvearrowright X$ is a \define{computable action} if for some finite set of generators $S$, we have that for each $s\in S$ the group action map $f_s \colon X \to X$ given by $f_s(x) = sx$ is a computable function. Notice that in this case, $f_g\colon X \to X$ is in fact uniformly computable for all $g\in\Gamma.$ In particular, if an action is computable, it will satisfy the definition with respect to every finite set of generators.

We are interested in the behavior of actions as topological dynamical systems, and thus will consider them up to topological conjugacy. Our focus will be on those conjugacy classes that contain some computable representative.   

\begin{definition} 
    Let $X$ be a compact metrizable space and $\Gamma$ a finitely generated group. We say that an action $\Gamma \curvearrowright X$ is an \define{effective dynamical system (EDS)} if it is topologically conjugate to a computable action $\Gamma \curvearrowright \widehat{X}$ for some recursively compact subset $\widehat{X}$ of a computable metric space.
    \end{definition}

We insist in that our definition of effective dynamical system \emph{does not} require the space $X$ to have any computable structure or the topological conjugacy to be computable. Given an EDS $\Gamma \curvearrowright X$, we will refer to a topologically conjugate instance of a computable action $\Gamma \curvearrowright \widehat{X}$ over a recursively compact subset $\widehat{X}$ of a computable metric space as a \define{computable representative}.


\begin{example}\label{example:effective_subshift_is_EDS}
      An effective subshift $X\subset A^{\Gamma}$ is an EDS. Indeed, if $A^{\Gamma}$ is an effective subshift, then $\Gamma\curvearrowright X$ is topologically conjugate to the action $\Gamma\curvearrowright \widehat{X}$, where $\widehat{X}$ is the subshift from \Cref{def:effective_subshift_through_free_group} (see also \Cref{rem.effective.subshift.is.EDS}). We will see later that, conversely, the only subshifts which are EDS are the effective ones, thus justifying the name (\Cref{prop:EDS_and_effective_subshift_are_equivalent}). We remark that the above holds for any finitely generated group $\Gamma$, even if it is not recursively presented.
\end{example}


The following example shows that the conjugacy class of an effective dynamical system can contain uncountably many systems. In particular, it contains representatives that are not computably homeomorphic.

\begin{example}
Fix some computable angle $\alpha \in [0,2\pi]$ and consider $C_r=\{x\in \RR^2: \|x\|=r\}$ to be the circle of radius $r>0$. Note that  the space $C_r$ is a recursively compact subset of $\RR^2$ only for computable $r$. Despite this, the conjugacy class of the action $\ZZ \curvearrowright C_r$ induced by the rotation by $\alpha$ has a computable representative, namely  $\ZZ \curvearrowright C_1$, and is therefore an EDS regardless of the value of $r$. 
\end{example}

\begin{example}\label{example:torus_matrix_multiplication}
The action $\operatorname{GL}_n(\ZZ) \curvearrowright \RR^n/\ZZ^n$ of the general linear group on the $n$-dimensional torus by left matrix multiplication is an EDS.
\end{example}

\begin{example}
    Recall the Thompson's groups $F$ and $T$ from~\Cref{ex:thompson}. These groups are more often defined in the literature as the space of piecewise linear homeomorphisms of $[0,1]$ (in the case of $T$, of $\RR/\ZZ$) that preserve orientation and whose non-differentiable points are dyadic rationals and whose slopes are all powers of 2. These maps are clearly computable and thus it follows that the natural actions $F\curvearrowright [0,1]$ and $T \curvearrowright \RR/\ZZ$ are EDS.
\end{example}

    Next we are going to show that the class of EDS is closed under computable topological factor maps. We stress the fact that in this result we need both actions to be represented as computable actions in computable metric spaces and the factor to be computable as well. In~\Cref{sec:factors} we will later show that without strong assumptions that enforce computability, the class of EDS is in general not closed under topological factor maps.

    \begin{proposition}\label{prop:computable_factor_EDS_is_EDS}
        Let $\Gamma$ be a finitely generated group and $X$, $Y$ be computable metric spaces. Let $X'\subset X$ be a recursively compact set and $\Gamma\curvearrowright X'$ be a computable action. Let $\Gamma\curvearrowright Y' \subset Y$ be a topological factor of $\Gamma\curvearrowright X'$ by a computable function $f\colon X'\to Y'$. Then $Y'$ is a recursively compact set and $\Gamma\curvearrowright Y'$ is a computable group action.    
    \end{proposition}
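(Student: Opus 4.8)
The plan is to establish two things: that $Y'$ is recursively compact, and that the induced action $\Gamma \curvearrowright Y'$ is computable. Both facts should flow directly from the machinery developed earlier in the excerpt, since all the relevant objects are computable. For the recursive compactness of $Y'$, I would use the observation stated just before \Cref{prop.inverse.of-computable-function-on-compatc-space}, namely that the image of a recursively compact set under a computable function is recursively compact, uniformly in the description of the set. Since $X'$ is recursively compact and $f\colon X'\to Y$ is computable with $Y' = f(X')$, this immediately yields that $Y'$ is recursively compact in $Y$.

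The main content is verifying that the factor action on $Y'$ is computable. Fix a finite generating set $S$ of $\Gamma$; for each $s\in S$ I must show the map $h_s\colon Y'\to Y'$ given by $h_s(y)=sy$ is computable. The key is the factoring relation $f(sx)=s f(x)$ for all $x\in X'$, which says that $h_s\circ f = f\circ g_s$, where $g_s\colon X'\to X'$ is the (computable) action of $s$ on $X'$. Intuitively, to compute $h_s(y)$ one wants to pull $y$ back through $f$ to some preimage $x\in f^{-1}(y)$, apply $g_s$, and push forward again by $f$; the factoring relation guarantees the result is independent of the chosen preimage. I expect the obstacle to be that $f$ need not be injective, so there is no computable inverse to lean on directly, and one cannot simply invoke \Cref{prop.inverse.of-computable-function-on-compatc-space}.

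To make this precise I would argue at the level of effectively open sets. Let $(B'_m)_m$ enumerate the basic balls of $Y$. I need to enumerate, for each $m$, an effectively open $V_m\subset Y$ with $h_s^{-1}(B'_m)\cap Y' = V_m\cap Y'$. The natural candidate is to set $V_m = Y\setminus f\!\left(X'\setminus (f\circ g_s)^{-1}(B'_m)\right)$. Here $(f\circ g_s)^{-1}(B'_m)$ is effectively open in $X'$ because $f\circ g_s$ is a composition of computable maps; its complement in $X'$ is effectively closed, hence recursively compact by \Cref{prop.equivalence.comp.closed.subset}; its image under the computable map $f$ is then recursively compact, hence effectively closed by the same proposition, so $V_m$ is effectively open, uniformly in $m$. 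It remains to check the set equality $h_s^{-1}(B'_m)\cap Y' = V_m\cap Y'$, which is where the factoring relation and the surjectivity of $f$ onto $Y'$ are used: a point $y\in Y'$ lies in $V_m$ iff no preimage of $y$ lands in $X'\setminus(f\circ g_s)^{-1}(B'_m)$, iff every preimage $x$ satisfies $f(g_s(x))\in B'_m$, and by the factoring relation $f(g_s(x))=s f(x)=sy=h_s(y)$; since $f$ is surjective onto $Y'$ there is at least one such preimage, so this holds exactly when $h_s(y)\in B'_m$. This verifies that each $h_s$ is computable, and since $S$ generates $\Gamma$ and computable maps compose, the whole action $\Gamma\curvearrowright Y'$ is computable, completing the proof.
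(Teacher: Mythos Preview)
Your proof is correct and essentially identical to the paper's: both obtain recursive compactness of $Y'$ as the computable image of $X'$, and both show each $h_s$ is computable via the identity $h_s^{-1}(B'_m)\cap Y' = f\bigl(g_s^{-1}f^{-1}(B'_m)\bigr)$ together with the push-forward-the-closed-complement technique. The only difference is packaging: the paper decomposes $g_s^{-1}f^{-1}(B'_m)$ into basic balls and cites the first assertion of \Cref{prop.inverse.of-computable-function-on-compatc-space} (which does not require injectivity, so your worry about invoking it is slightly misplaced), whereas you apply the same complement argument directly to the whole preimage---which is in fact a bit cleaner, since you exploit that this particular open set is $f$-saturated.
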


    \begin{proof}
         As $X'$ is recursively compact and $f$ is computable, it follows that $Y'$ is recursively compact. Let $(B^X_i)_{i \in \NN}$ and $(B^Y_i)_{i \in \NN}$ be recursive enumerations of the basic balls in $X$ and $Y$ respectively. Let $s \in \Gamma$ and let $i \in \NN$. As $f$ is computable it follows that, uniformly in $i$, there is a recursively enumerable set $I_i \subset \NN$ such that $f^{-1}(B^Y_i) = X' \cap \bigcup_{j \in I_i} B^{X}_j$. As the action is computable, uniformly for each $j \in \NN$ there is a recursively enumerable set $I'_j\subset \NN$ such that \[s^{-1}f^{-1}(B^Y_i) = X' \cap \bigcup_{j \in I_i}\bigcup_{k \in I'_j} B_k^X.\]
        Finally, as $f$ is computable and $X'$ recursively compact, it follows by~\Cref{prop.inverse.of-computable-function-on-compatc-space} that uniformly for $k \in \NN$ there is a recursively enumerable set $I''_k\subset \NN$ such that \[ s^{-1}(B_i^Y\cap Y') = f(s^{-1}f^{-1}(B_i^Y)) = \bigcup_{j \in I_i}\bigcup_{k \in I'_j}\bigcup_{m \in I''_k} B_m^Y.  \]
        Where the first equality holds because $f$ is $\Gamma$-equivariant. It follows the map $y \mapsto sy$ is computable and as $s$ is arbitrary, we obtain that $\Gamma \curvearrowright Y'$ is computable.
    \end{proof}


\section{EDS as factors of computable actions on 0-dimensional effectively closed sets} \label{sec:resultado}
In this section we prove~\Cref{thm:zero_dim_effective_extension}. Let us briefly recall the standard procedure to build a zero-dimensional topological extension of a dynamical system $\Gamma\curvearrowright X$.
\begin{enumerate}
    \item First we extract a sequence $(\mathcal P_n)_{n\in\N}$ of open covers of $X$ with diameters tending to zero, and associate to each one of them a subshift $Y_n$. 
    \item Define the sequence of subshifts $(Z_n)_{n\in\N}$ where $Z_n = \prod_{k \leq n} Y_k$ with the coordinate-wise shift, and consider the projection factor maps $\pi_{n+1}\colon Z_{n+1}\to Z_n$.
    \item Define the inverse limit $Z= \varprojlim Z_n$, this is a topological extension of $\Gamma\curvearrowright X$.
\end{enumerate}
In our proof we effectivize this construction. For this purpose, we first need to prove effective versions of some of the steps. This includes infinite products, inverse limits, and subshifts associated to covers.


\subsection{Effective versions of some classical constructions.}
 
 In this section we prove effective versions of a few classical constructions commonly used in dynamical systems, and which may therefore be of independent interest.


\subsubsection{Products of computable actions and inverse limit constructions}

Now we show that the operations of countable product and inverse limits are computable under mild computability assumptions. For the next two results, $(Y_n)_{n \in \NN}$ will be a sequence of uniformly computable metric spaces as in~\Cref{product} and we will consider $\prod_{n\in\N} Y_n$ with its product computable metric space structure. 
    
    \begin{proposition}\label{prop:countable-product-of-EDS}
        Let $X_n \subset Y_n$ be a sequence of subsets and $\Gamma\curvearrowright X_n$ be a uniform sequence of computable actions of the finitely generated group $\Gamma$. Then the component-wise action
        \[ \Gamma\curvearrowright \prod_{n\in\N} X_n  \subset \prod_{n\in\N} Y_n \]
        is computable. 
    \end{proposition}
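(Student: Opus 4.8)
The plan is to show directly that each generator acts by a computable map on the product space. The key observation is that the product metric space structure is set up (as in \Cref{product}) so that the projection onto each coordinate, and more generally onto any finite collection of coordinates, is a computable function, and conversely a function into the product is computable as soon as all its compositions with coordinate projections are computable uniformly.

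First I would fix a finite generating set $S$ of $\Gamma$ and a generator $s\in S$. Since each action $\Gamma\curvearrowright X_n$ is computable \emph{uniformly} in $n$, the family of maps $f^{(n)}_s\colon X_n\to X_n$ given by $f^{(n)}_s(x)=sx$ is uniformly computable: there is a single algorithm which, on input $(n,m)$ where $m$ indexes a basic ball $B^{(n)}_m$ of $Y_n$, enumerates a set $I_{n,m}\subset\NN$ with $(f^{(n)}_s)^{-1}(B^{(n)}_m)=U_{I_{n,m}}\cap X_n$. The component-wise action map $F_s\colon \prod_n X_n\to\prod_n X_n$ is then simply $F_s\big((x_n)_n\big)=(f^{(n)}_s(x_n))_n$, i.e. it acts coordinate-wise by the uniformly computable family $(f^{(n)}_s)_n$.

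The main step is to verify that such a coordinate-wise map between product spaces is computable. Here I would use the standard description of basic balls in the product: a basic ball in $\prod_n Y_n$ is, up to the usual approximation coming from the weighting $2^{-n}$ in the product metric, determined by finitely many coordinate constraints, and checking membership of $F_s(x)$ in such a ball reduces to checking membership of finitely many coordinates $x_{n_1},\dots,x_{n_k}$ in corresponding effectively open sets of $Y_{n_1},\dots,Y_{n_k}$. Concretely, to compute $F_s^{-1}(B)$ for a basic ball $B$ of the product, I would first compute from the radius of $B$ a threshold $N$ beyond which the constraint imposed by $B$ on coordinates $n>N$ is vacuous (since those coordinates contribute at most $\sum_{n>N}2^{-n}$ to the distance); then $F_s^{-1}(B)$ is obtained by intersecting finitely many sets of the form $\mathrm{proj}_n^{-1}\big((f^{(n)}_s)^{-1}(B^{(n)}_m)\big)$, each of which is effectively open uniformly by the uniform computability of the $f^{(n)}_s$ together with the computability of the projections. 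A finite intersection of uniformly effectively open sets is effectively open, and the whole procedure is uniform in the index of $B$, so $F_s$ is computable.

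I expect the main obstacle to be purely the bookkeeping of the product metric: turning the condition ``$F_s(x)\in B$'' into a finite conjunction of coordinate conditions requires care because the product metric mixes all coordinates with the weights $2^{-n}$, so a single basic ball of the product does not factor cleanly as a finite product of coordinate balls. The clean way around this is to observe that it suffices to produce, for each basic ball $B$, an effectively open preimage, and one is free to use the finitely many coordinates that matter together with the recursive compactness available from \Cref{thm:Tychonoff_computable}; one does not need an exact tensor decomposition of $B$, only an effectively open set equal to $F_s^{-1}(B)\cap\prod_n X_n$. Once this is done for a single generator, the conclusion follows since computability for one generating set implies uniform computability for all group elements, and hence $\Gamma\curvearrowright\prod_n X_n$ is a computable action.
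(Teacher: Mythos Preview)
Your proposal is correct and follows essentially the same idea as the paper: fix a generator, reduce the preimage computation to finitely many coordinates, and invoke the uniform computability of the coordinate actions. The only difference is presentational: the paper sidesteps your bookkeeping worry about product metric balls by working directly with cylinder sets of the form $U_0\times\dots\times U_n\times Y_{n+1}\times Y_{n+2}\times\cdots$ (which form an effective sub-basis for the product), so the preimage is simply the product of the coordinate preimages and no threshold computation from the radius is needed.
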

    \begin{proof}
        Let us fix $g\in\Gamma$. Let $n\in\N$ and let $U_i$ be an effectively open subset of $X_i$ for $i\in\{0,\dots,n\}$. We verify that the preimage by $g$ of  \[U=U_0\times\dots \times U_n\times Y_{n+1}\times Y_{n+2}\times\dots\] is effectively open in $\prod_{n\in\N}X_n$ and that this process is uniform in $n$ and the $U_i$. Indeed, \[g^{-1}(U) = g^{-1}(U_0)\times\dots \times g^{-1}(U_n)\times X_{n+1}\times X_{n+2}\times\dots\] This set is effectively open in $\prod_{n\in\N}X_n$, and it can be uniformly computed from $n \in \NN$ and a description of $U_0,\dots,U_n$ because each of the $g^{-1}(U_i)$ is effectively open in $X_i$, and can be uniformly computed from a description of $U_i\subset Y_i$.
    \end{proof}

    \begin{proposition}\label{prop:inverse-limit-of-EDS}
        Let $\Gamma\curvearrowright X_n$ be as in the previous statement, where each $X_n$ is now assumed to be an effectively closed subset of $Y_n$. Let $(\pi_n)_{n\geq 1}$ be a sequence of uniformly computable functions, where each $\pi_{n+1}\colon X_{n+1}\to X_n$ is a topological factor map from $\Gamma\curvearrowright X_{n+1}$ to  $\Gamma\curvearrowright X_{n}$. Then the inverse limit
        \[\varprojlim X_n=\{(x_n)\in\prod_{n \in \NN} X_n\mid\pi_{n+1}(x_{n+1})=x_n, n\geq 1\}\]    
        is an effectively closed subset of $\prod_{n \in \NN} Y_n$. 
    \end{proposition}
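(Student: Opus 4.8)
The plan is to realize $\varprojlim X_n$ as the fixed-point set of a single computable self-map of the product, and then invoke~\Cref{prop:computability_of_fixed_points}.

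First I would check that the ambient product $\prod_{n\in\N} X_n$ is itself an effectively closed subset of $\prod_{n\in\N} Y_n$; this is the prerequisite that lets us apply the fixed-point proposition with $K=\prod_{n}X_n$. The argument is direct: writing the complement as
\[ \left(\prod_{n\in\N} Y_n\right)\setminus \prod_{n\in\N} X_n \;=\; \bigcup_{n\in\N}\proj_n^{-1}\big(Y_n\setminus X_n\big), \]
each $Y_n\setminus X_n$ is effectively open uniformly in $n$ (since the $X_n$ are uniformly effectively closed), and each coordinate projection $\proj_n\colon \prod_{m}Y_m\to Y_n$ is computable, so every term on the right is uniformly effectively open. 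A uniform union of effectively open sets is effectively open, hence $\prod_{n}X_n$ is effectively closed.

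Next I would define the self-map $\Pi\colon \prod_{n\in\N} X_n\to \prod_{n\in\N} X_n$ whose $n$-th coordinate is $\Pi(x)_n=\pi_{n+1}(x_{n+1})$ for every $n\geq 1$ (and the identity on any unconstrained coordinate), so that $x$ is fixed by $\Pi$ precisely when $\pi_{n+1}(x_{n+1})=x_n$ for all $n$, i.e. precisely when $x\in\varprojlim X_n$. The map $\Pi$ takes values in $\prod_n X_n$ because $\pi_{n+1}(X_{n+1})\subset X_n$, and it is computable: its $n$-th coordinate is the composition of the computable projection $x\mapsto x_{n+1}$ with the uniformly computable map $\pi_{n+1}$, hence is uniformly computable in $n$, and a map into a product computable metric space is computable as soon as all of its coordinates are uniformly computable. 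With these two facts in hand, $\Pi$ is a computable self-map of the effectively closed set $\prod_n X_n$, so~\Cref{prop:computability_of_fixed_points} immediately yields that its fixed-point set $\varprojlim X_n$ is effectively closed in $\prod_n Y_n$, as desired.

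The single point that genuinely needs care, and the main obstacle, is that the connecting maps $\pi_{n+1}$ are defined only on the $X_{n+1}$, not on all of $Y_{n+1}$; this is exactly why $\Pi$ must be set up on $\prod_n X_n$ rather than on the full product, and why the preliminary step showing $\prod_n X_n$ is effectively closed is not optional. If one prefers to avoid the fixed-point proposition, an equivalent route is to observe that $F_n(x)=d_n\big(\pi_{n+1}(x_{n+1}),x_n\big)$ is computable and nonnegative on $\prod_n X_n$, uniformly in $n$, so that $\{F_n>0\}=\prod_n X_n\cap U_n$ for uniformly effectively open sets $U_n\subset\prod_n Y_n$; the complement of the inverse limit then equals $\big(\prod_n Y_n\setminus\prod_n X_n\big)\cup\bigcup_{n}U_n$, which is effectively open. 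Either way, the essential hypotheses being used are the uniform computability of the connecting maps and of the coordinate projections.
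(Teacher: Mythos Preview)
Your proposal is correct and follows essentially the same route as the paper: define the shift-type self-map $(x_n)\mapsto(\pi_{n+1}(x_{n+1}))$ on $\prod_n X_n$, verify its computability, and then invoke \Cref{prop:computability_of_fixed_points} to conclude that the fixed-point set $\varprojlim X_n$ is effectively closed. You are in fact a bit more explicit than the paper in spelling out why $\prod_n X_n$ is effectively closed in $\prod_n Y_n$, a hypothesis the paper uses silently when applying the fixed-point proposition.
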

    \begin{proof}
        Let $f\colon \prod_{n \in \NN} X_n\to \prod_{n \in \NN} X_n$ be the function defined by $(x_n)_{n\in\N}\mapsto (\pi_{n+1}(x_{n+1}))_{n\in\N}$. We claim that $f$ is computable.  We verify that the preimage by $f$ of a set of the form  
        \[U=U_0\times\dots \times U_n\times X_{n+1}\times X_{n+2}\times\dots\]
        is effectively open in $\prod_{n \in \NN} Y_n$, uniformly on $n$ and the $U_i$. Indeed, the preimage $f^{-1}$ can be written as 
        \[ f^{-1}(U)=\pi_1^{-1}(U_0)\times\dots \times \pi_{n+1}^{-1}(U_n)\times X_{n+1}\times X_{n+2}\times\dots\]
        This set is effectively open uniformly on the $U_i$ because  $(\pi_n)_{n\geq 1}$ is a sequence of uniformly computable functions. On the other hand, it is clear that $\varprojlim X_n$ equals the set of fixed points of $f$ in $\prod_{n \in \NN} X_n$. Then it follows from \Cref{prop:computability_of_fixed_points} that  $\varprojlim X_n$ is effectively closed. 
    \end{proof}


	\subsubsection{Effective covers and partitions, and their associated subshifts}\label{subsec:subshift-covers}

    In this subsection we fix a recursively compact set $X$ which lies in a computable metric space $\mathcal{X}$. We start by reviewing some standard terminology regarding covers and the construction of a subshift from a cover. A \textbf{cover} $\mathcal P$ of $X$ is a finite collection of sets whose union equals $X$. A cover $\mathcal P$ is said to be open (resp.~closed) if it consists on sets which are open in $X$ (resp.~closed in $X$). We say that the cover $\mathcal P'$ \textbf{refines} $\mathcal P$ if every element in $\mathcal P'$ is contained in some element of $\mathcal P$. The join of two covers $\mathcal P \lor \mathcal P'$ is the cover $\{P\cap P'\mid P\in \mathcal P, P'\in\mathcal P'\}$. Given a group action $\Gamma\curvearrowright X$ and a finite subset $F\subset \Gamma$, we write $\bigvee _{g\in F} g^{-1}\mathcal P$ for the join of all $g^{-1}\mathcal{P}$ for $g\in F$. The \textbf{diameter} of the cover $\mathcal P$ is the maximum of the diameters of its elements. A cover $\mathcal P$ of $X$  is said to be \textbf{generating} for the group action $\Gamma\curvearrowright X$ if for each $\varepsilon >0$ there is a finite subset $F\subset \Gamma$ such that $\bigvee _{g\in F} g^{-1}\mathcal P$ has diameter at most $\varepsilon$. 
	
    Given a cover of $X$ labeled as $\mathcal{P}=\{P_0,\dots,P_n\}$, we define the \textbf{subshift cover} $Y(\Gamma\curvearrowright X, \mathcal{P})$ by \[ Y(\Gamma\curvearrowright X, \mathcal{P}) = \{ y \in \{0,\dots,n\}^{\Gamma}  : \mbox{ there exists } x \in X \mbox{ such that for every } g \in \Gamma, g^{-1}x \in \overline{P_{y(g)}}\}.   \] 
    
	The idea behind this definition is that every configuration in the subshift labels the orbit of some element $x\in X$ under the action $\Gamma\curvearrowright X$ by indicating the elements of $\mathcal{P}$ it hits. A compactness argument shows that a configuration $y$ lies in $Y(\Gamma \curvearrowright X, \mathcal P)$ if and only if for every finite $F\subset \Gamma$ and pattern $p\colon F\to \{0,\dots,n\}$ which occurs in $y$, the set
	  \[ D(p):=\bigcap_{g\in F} g^{-1}(\overline{P_{p(g)}}). \]
	is nonempty. In particular, $Y(\Gamma\curvearrowright X, \mathcal{P})$ is indeed a subshift. Now we prove that this well known construction is computable when the cover is made by effectively closed sets.
     \begin{definition}
        A cover $\mathcal P$ of $X$ is called \textbf{effective} if its elements are effectively closed sets.
    \end{definition}
	\begin{proposition}\label{prop:effective-covers-give-effective-subshifts}
        Let $\Gamma \curvearrowright X$ be a computable action of a finitely generated group, $S$ be a finite generating set of $\Gamma$, and let $F(S)\curvearrowright X$ be the induced action. Then the associated subshift $Y(F(S) \curvearrowright X, \mathcal{P})$ is effective uniformly for all effective covers $\mathcal{P}$.
	\end{proposition}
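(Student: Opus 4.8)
The plan is to show that the language of forbidden patterns of $Y \isdef Y(F(S)\curvearrowright X,\mathcal P)$ is recursively enumerable, uniformly in a description of the effective cover $\mathcal P=\{P_0,\dots,P_n\}$, and then to convert this into effective closedness of $Y$ inside $A^{F(S)}$. Recall the compactness characterization stated just above: $y\in Y$ if and only if no pattern $p$ occurring in $y$ satisfies $D(p)=\varnothing$, so that $Y=X_{\mathcal F}$ with $\mathcal F=\{p : D(p)=\varnothing\}$. Thus it suffices to semi-decide the emptiness of $D(p)$ uniformly in $p$, and then to package $\mathcal F$ into an effective description of $Y$.

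First I would observe that the induced action $F(S)\curvearrowright X$ is computable: its generators act by the same computable maps as the generators of $\Gamma\curvearrowright X$, and since compositions of computable maps are uniformly computable, the maps $x\mapsto g^{-1}x$ are uniformly computable over all $g\in F(S)$ presented as reduced words. Because $\mathcal P$ is effective, each $\overline{P_i}\cap X=P_i$ is effectively closed, hence recursively compact, uniformly in $i$ and in the description of $\mathcal P$ (\Cref{prop.equivalence.comp.closed.subset}). Consequently each set $g^{-1}(\overline{P_{p(g)}}\cap X)$, being the image of a recursively compact set under a computable map, is recursively compact, hence effectively closed, uniformly in $g$ and $p(g)$. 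Taking the finite intersection over the $g$ in the support of $p$, and using that finite intersections of uniformly effectively closed sets are effectively closed, we obtain that $D(p)=\bigcap_{g}g^{-1}(\overline{P_{p(g)}}\cap X)$ is effectively closed, uniformly in $p$.

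Next, since $D(p)\subset X$ and $X$ is recursively compact, emptiness of $D(p)$ can be semi-decided: writing $X\setminus D(p)=X\cap U$ with $U$ effectively open uniformly in $p$, we have $D(p)=\varnothing$ if and only if $X\subset U$, which is semi-decidable by the recursive compactness of $X$ (\Cref{rem_emptiness}). Running this test in parallel over all patterns $p$ enumerates $\mathcal F$, so $\mathcal F$ is recursively enumerable, uniformly in the description of $\mathcal P$. Finally, because $F(S)$ is free it has decidable word problem, so $A^{F(S)}$ carries its canonical computable structure and the complement of $Y=X_{\mathcal F}$ equals $\bigcup_{g\in F(S),\,p\in\mathcal F} g^{-1}[p]$, a uniformly enumerable union of cylinders; hence $Y$ is effectively closed, i.e.\ an effective subshift, uniformly over all effective covers $\mathcal P$. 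The step that requires the most care is the uniform effective closedness of $D(p)$: it rests on the image of recursively compact sets under the uniformly computable action maps being recursively compact, together with closure of effectively closed sets under finite intersection; the remaining steps are routine bookkeeping of uniformity.
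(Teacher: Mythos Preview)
Your proof is correct and follows essentially the same approach as the paper: show that $D(p)$ is uniformly effectively closed, then use recursive compactness to semi-decide emptiness. The only cosmetic difference is that you obtain effective closedness of $g^{-1}(\overline{P_{p(g)}}\cap X)$ by viewing it as the \emph{image} of a recursively compact set under the computable homeomorphism $x\mapsto g^{-1}x$, whereas the paper views it as the \emph{preimage} of an effectively closed set under the computable map $x\mapsto gx$; since the action is by computable bijections, both are valid and equally direct.
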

    \begin{proof} It suffices to show that we can semi-decide, given an effective cover $\mathcal{P}$ and a pattern $p \colon W \to F(S)$ with $W\subset F(S)$ finite, whether $D(p)=\varnothing$. Observe that $D(p)$ is an effectively closed subset of $X$, uniformly in $p$ and $\mathcal{P}$. Indeed, each $P_i$  is an effectively closed subset of $X$, the preimage of an effectively closed set by a computable function is effectively closed, and the finite intersection of effectively closed sets is effectively closed. Thus, we can uniformly semi-decide whether ${D(p)}$ is empty using the recursive compactness of $X$ (see \Cref{def:recursive compactness} and \Cref{rem_emptiness}).
    \end{proof}    
     
    \begin{remark}
    In what follows we focus on the construction of effective covers. These will be obtained intersecting $X$ with unions of closures of basic balls in $\mathcal{X}$. Note that $X$ is not assumed to be a computable metric space, and this is the reason why we appeal to the space $\mathcal X$.   
    \end{remark}
    \begin{proposition}\label{prop:Covers-from-basic-balls-are-effective}
        A cover of $X$  whose elements are finite unions of elements of the form $\overline{B}\cap X$, where $B$ are basic balls in $\mathcal{X}$, is effective.
    \end{proposition}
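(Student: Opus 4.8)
The plan is to reduce the statement to three facts already available in the preliminaries: that the closure of a basic ball is effectively closed, that $X$ itself is effectively closed, and that the class of effectively closed sets is stable under the finite Boolean operations used to assemble the cover elements. Concretely, an element of such a cover has the shape $\bigcup_{i=1}^{k}(\overline{B_i}\cap X)$ for finitely many basic balls $B_i$, so it suffices to show each $\overline{B_i}\cap X$ is effectively closed in $\mathcal{X}$ and then close off under finite unions.

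First I would record that each basic ball $B=B(s,r)$ has computable center $s\in\mathcal S$ and rational radius $r$; since every rational is upper semi-computable, \Cref{ex:closed rational ball} gives that $\overline{B}$ is effectively closed in $\mathcal{X}$, and this description is uniform in the index of $B$. Next I would argue that $X$ is effectively closed in $\mathcal{X}$: this is exactly the ``recursively compact implies effectively closed'' direction of \Cref{prop.equivalence.comp.closed.subset}, whose proof enumerates the complements of closures of covering unions of basic balls and uses only the recursive compactness of $X$ (not of $\mathcal{X}$). With these two inputs in hand, $\overline{B}\cap X$ is a finite intersection of effectively closed sets, hence effectively closed, uniformly in $B$; and a cover element $\bigcup_{i=1}^{k}(\overline{B_i}\cap X)$ is then a finite union of effectively closed sets, hence again effectively closed. (Both facts are the complementary form of the statement that effectively open sets are closed under finite intersection and countable union.) Since every element of the cover is effectively closed in $\mathcal{X}$, the cover is effective by definition.

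The only point requiring genuine care — and the nearest thing to an obstacle — is making sure ``$\overline{B}$'' enters as an \emph{effectively} closed set rather than merely a closed one: the decisive input is that the radius is rational, hence upper semi-computable, which is precisely the hypothesis under which \Cref{ex:closed rational ball} applies. Everything else is the routine stability of effectively closed sets under finite unions and finite intersections, together with the effective closedness of $X$ coming from recursive compactness; no equivariance, no further compactness argument, and no uniformity beyond what is already packaged in these cited results is needed here.
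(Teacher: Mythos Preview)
Your proposal is correct and follows essentially the same route as the paper: the paper's proof simply records that $X$ is effectively closed and that the closure of a basic ball is effectively closed (citing \Cref{ex:closed rational ball}), leaving the finite union/intersection closure properties implicit. Your write-up adds the useful extra care of explaining why $X$ is effectively closed even though the ambient space $\mathcal{X}$ is not assumed recursively compact, by isolating the relevant direction of \Cref{prop.equivalence.comp.closed.subset}; this is a genuine clarification the paper glosses over.
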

    \begin{proof}
        This follows from the fact that $X$ is effectively closed, and that the topological closure of a basic ball in $\mathcal{X}$ is effectively closed (see \Cref{ex:closed rational ball}). 
    \end{proof}

   The following construction provides a uniform sequence of refined effective covers at any desired resolution. 
   
	\begin{lemma}\label{lem:lemma-cubiertas-efectivas}
        There is an algorithm which on input $n\in\N$ computes an effective cover $\mathcal{P}_n$ of $X$ with diameter at most  $2^{-n}$, and such that for every $n\in \N$, $\mathcal P_ {n+1}$ refines $\mathcal{P}_n$ and the inclusion of elements in $\mathcal P_ {n+1}$ to elements in $\mathcal P _n$ is decidable uniformly on $n$. Moreover, if $X$ is a zero dimensional set, we can add the extra condition that $\mathcal P$ is a partition of $X$.
	\end{lemma}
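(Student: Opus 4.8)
The plan is to construct the covers inductively, using recursive compactness to obtain base covers of prescribed diameter and the join operation to force refinement together with a computable containment relation. For the base step, I would first produce, uniformly in $n$, an auxiliary effective cover $\mathcal{Q}_n$ of $X$ of diameter at most $2^{-n}$. Consider the family of all basic balls $B$ of $\mathcal{X}$ of radius at most $2^{-(n+1)}$: their closures have diameter at most $2^{-n}$, and by density of $\mathcal{S}$ they cover $X$. Since $X$ is recursively compact, one can enumerate finite subfamilies and semi-decide the inclusion $X\subseteq\bigcup_i B_i$ (\Cref{def:recursive compactness}, \Cref{prop.equivalence.comp.closed.subset}) until a finite subcover $B_1,\dots,B_m$ is found; its closures still cover $X$. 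Setting $\mathcal{Q}_n=\{\overline{B_1}\cap X,\dots,\overline{B_m}\cap X\}$ then yields an effective cover by \Cref{ex:closed rational ball} and \Cref{prop:Covers-from-basic-balls-are-effective}, of diameter at most $2^{-n}$, computed uniformly in $n$.

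Next I would force refinement by iterated joins. Set $\mathcal{P}_0=\mathcal{Q}_0$ and $\mathcal{P}_{n+1}=\mathcal{P}_n\vee\mathcal{Q}_{n+1}$, indexing the elements of $\mathcal{P}_{n+1}$ by pairs $(i,j)$ so that the $(i,j)$-th element is $P_i\cap Q_j$ with $P_i\in\mathcal{P}_n$ and $Q_j\in\mathcal{Q}_{n+1}$. Each such element is a finite intersection of finite unions of sets of the form $\overline{B}\cap X$, hence effectively closed uniformly in $n$, so $\mathcal{P}_{n+1}$ is again an effective cover; its union is $X$ because the join of two covers is a cover. Since $\mathcal{P}_{n+1}$ refines $\mathcal{Q}_{n+1}$ (each $P_i\cap Q_j\subseteq Q_j$), its diameter is at most $2^{-(n+1)}$. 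Finally, the containment $P_i\cap Q_j\subseteq P_i$ holds by construction, so the map $(i,j)\mapsto i$ exhibiting, for each element of $\mathcal{P}_{n+1}$, a containing element of $\mathcal{P}_n$ is read off directly from the indexing; this is the decidable refinement relation required by the statement.

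For the zero-dimensional addendum I would replace covers by clopen partitions. Using \Cref{prop.clopens} one computes, at resolution $2^{-n}$, a clopen cover of $X$ by sets of the form $C\cap X$ with $C$ a finite union of basic balls, and then disjointifies it via $D_k=C_k\setminus(C_1\cup\dots\cup C_{k-1})$ to obtain a clopen partition whose pieces are effectively closed and have diameter at most $2^{-n}$. Running the same join construction, now with joins of clopen partitions (which are again clopen partitions, since clopen sets are closed under intersection and the join of two partitions is a partition), produces partitions $\mathcal{P}_n$ with all the desired properties. This is precisely the sequence of clopen partitions already built in the proof of \Cref{thm:compacto-y-cero-dimensional-tiene-copia-en-el-cantor}, which I would simply invoke.

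The main obstacle, which the construction is designed to avoid rather than to solve, is the decidability of the containment relation: for two arbitrary effectively closed sets $A,B$ one cannot in general decide whether $A\subseteq B$, since with the tools at hand neither the inclusion nor its negation is semi-decidable (semi-deciding $A\subseteq U$ requires $U$ to be effectively open). The join indexing circumvents this entirely by building the witnessing containment into the data of $\mathcal{P}_{n+1}$, so the refinement relation is available by definition and never tested a posteriori. The remainder is bookkeeping: carrying along the basic balls and the boolean structure defining each element so that effective closedness, the diameter bounds, and the covering property all hold uniformly in $n$.
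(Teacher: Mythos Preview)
Your argument is correct and proceeds along a genuinely different route from the paper. The paper builds $\mathcal{P}_{n+1}$ by exhaustive search among finite families $\{V_i\}$ of finite unions of basic balls, semi-deciding at each step the conditions ``small diameter'', ``$\overline{V_i}$ contained in the interior of some member of $\mathcal{P}_n$'', ``covers $X$'', and (in the zero-dimensional case) ``pairwise disjoint on $X$''; the refinement witness is recorded at the moment it is found. You instead manufacture refinement syntactically via the join $\mathcal{P}_n\vee\mathcal{Q}_{n+1}$, so that $P_i\cap Q_j\subseteq P_i$ is read off the index and no search beyond finding the base covers $\mathcal{Q}_n$ is needed. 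Your route is more elementary and sidesteps the semi-decision of containment in an interior; the paper's route keeps every $\mathcal{P}_n$ as a finite list of closures of finite unions of basic balls (matching \Cref{prop:Covers-from-basic-balls-are-effective} on the nose) rather than iterated intersections, and avoids the multiplicative growth in the number of (possibly empty) elements that repeated joins produce. Both constructions deliver only a \emph{recorded} containment witness rather than a decision procedure for arbitrary inclusions between effectively closed sets---precisely what the application in \Cref{prop:main-result} needs, and exactly the point you isolate in your final paragraph.
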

	\begin{proof} Using the recursive compactness of $X$, one can compute a cover $\mathcal{P}_0$ of $X$ made by the intersection of $X$ with (the closure of) basic balls of radius one. The algorithm now proceeds inductively on $n \in \NN$. After it has computed $\mathcal P_0,\dots,\mathcal P_n$ with the desired properties, it computes $\mathcal P_{n+1}$ as follows. Let $\{V_i\}_{i \in \NN}$ be a computable enumeration of all finite unions of basic balls. First observe that there is a finite subset $I$ of $\N$ such that the set  $\{V_i : i\in I\}$ satisfies the following conditions:
        \begin{enumerate}
            \item $V_i$ has diameter at most $2^{-(n+1)}$. 
            \item $\overline{V}_i$ is contained in the interior of some of the elements of $\mathcal P_n$
            \item The union of $V_i$ for $i\in I$ contains $X$
            \item If $X$ is zero dimensional, then the sets $\overline{V_i}\cap X$ are disjoint.
        \end{enumerate}
        The existence of such a finite set of basic balls satisfying the first three conditions follows from the fact that $X$ is compact and that the set $\{V_i : i\in \N\}$ is a basis for the topology. If $X$ is zero dimensional, then the last condition can be added because by \Cref{prop.clopens} we can take the basic sets to be clopen. We now observe that it is semi-decidable whether a finite subset $I$ of $\N$ satisfies the conditions above:
        \begin{enumerate}
        \item Finite unions of (the closure of) basic balls are computable sets, so the first condition is routine,
        \item the semi-decidability of the second follows from the fact that $\overline{V}_i\cap X$ is recursively compact and the interior of the elements in the partition are effectively open,
        \item the semi-decidability of the third condition follows directly from the recursive compactness of $X$,
        \item finally, for the fourth condition it suffices to note that in a recursively compact space it is semi-decidable whether two effectively closed sets have empty intersection (recall that in a recursively compact space we can semi-decide whether an effectively closed subset is empty, the intersection of two effectively closed sets is effectively closed, and this is a computable operation on the descriptions of the sets). 
        \end{enumerate}
        Thus we can compute a set $I$ as desired by an exhaustive search, and finally set $\mathcal {P}_{n+1}=\{V_i\cap X : i\in I\}$.  
	\end{proof}
    \begin{proposition}\label{prop:generating-covers-can-be-taken-effective}
        If $\Gamma\curvearrowright X$ admits a generating open cover, then it admits a generating effective cover.
    \end{proposition}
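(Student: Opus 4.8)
The plan is to refine the given generating open cover by an effective cover built from closures of small basic balls, and then to observe that any cover refining a generating cover is itself generating. It therefore suffices to produce an effective refinement of the given generating open cover.

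First I would isolate the refinement principle. Suppose $\mathcal U$ is a generating open cover and $\mathcal P$ is any finite cover \emph{refining} $\mathcal U$, in the sense that every element of $\mathcal P$ is contained in some element of $\mathcal U$. Fix $\varepsilon>0$ and take, by the generating property, a finite $F\subset\Gamma$ for which $\bigvee_{g\in F}g^{-1}\mathcal U$ has diameter at most $\varepsilon$. Every element of $\bigvee_{g\in F}g^{-1}\mathcal P$ has the form $\bigcap_{g\in F}g^{-1}P_g$ with $P_g\in\mathcal P$; choosing for each $g$ an element $U_g\in\mathcal U$ with $P_g\subseteq U_g$ gives $\bigcap_{g\in F}g^{-1}P_g\subseteq\bigcap_{g\in F}g^{-1}U_g$, and the right-hand side is an element of $\bigvee_{g\in F}g^{-1}\mathcal U$, hence of diameter at most $\varepsilon$. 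Since the diameter of a set is monotone under inclusion, $\bigvee_{g\in F}g^{-1}\mathcal P$ also has diameter at most $\varepsilon$. Thus $\mathcal P$ is generating. Note this argument uses only set inclusions and the monotonicity of diameter, so it is insensitive to the precise convention for $g^{-1}P$ and does not require the elements of $\mathcal P$ to be open.

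Next I would construct the effective refinement. Let $\mathcal U$ be a generating open cover. For each $x\in X$, pick $U\in\mathcal U$ with $x\in U$, and, using that the basic balls of $\mathcal X$ form a basis and $U$ is open, choose a basic ball $B$ with $x\in B$ and $\overline{B}\subseteq U$. The balls so obtained form an open cover of the compact set $X$, so finitely many $B_1,\dots,B_m$ already cover $X$, each $\overline{B_j}$ being contained in some element of $\mathcal U$. Then $\mathcal P=\{\overline{B_1}\cap X,\dots,\overline{B_m}\cap X\}$ is a cover of $X$ that refines $\mathcal U$, and it is effective by~\Cref{prop:Covers-from-basic-balls-are-effective}, each $\overline{B_j}\cap X$ being effectively closed as the intersection of the effectively closed sets $\overline{B_j}$ (see~\Cref{ex:closed rational ball}) and $X$. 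Combining this with the refinement principle shows that $\mathcal P$ is a generating effective cover, as desired.

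I do not expect any genuine obstacle here, as the construction is routine. The only point deserving care is terminological: the notion of ``generating'' was phrased for open covers, whereas $\mathcal P$ consists of effectively closed sets. This causes no difficulty because the defining condition concerns only the diameters of the joins $\bigvee_{g\in F}g^{-1}\mathcal P$, which is a purely metric condition meaningful for any finite cover; I would simply make this explicit so that the statement ``generating effective cover'' is unambiguous.
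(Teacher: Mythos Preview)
Your proof is correct and follows essentially the same route as the paper: refine the generating open cover by finitely many basic balls using compactness, take closures to obtain an effective cover via \Cref{prop:Covers-from-basic-balls-are-effective}, and invoke the refinement principle to conclude the resulting cover is generating. The only (harmless) difference is that you arrange $\overline{B}\subseteq U$ from the outset so that the \emph{closed} cover itself refines $\mathcal U$, whereas the paper first argues the open refinement is generating and then passes to closures; your ordering makes the refinement step slightly cleaner.
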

    \begin{proof}
        Let $\mathcal P$ be a generating open cover of $\Gamma\curvearrowright X$. We construct a new cover $\mathcal P'$ as follows. Let $\mathcal J$ be the collection of all sets of the form $B_i\cap X$ which are contained in some element of $\mathcal P$, where $B_i$ is a basic ball in $\mathcal X$. The fact that basic balls constitute a basis for a topology of $\mathcal X$, together with the compactness of $X$, imply that there is a finite subset $\mathcal J'\subset \mathcal J$ which is an open cover of $X$.  As $\mathcal J'$ refines $\mathcal P$, it follows that it is also a generating cover of $\Gamma\curvearrowright X$.  
        Finally we define $\mathcal P'$ as the closed cover obtained by taking the topological closure of all elements in $\mathcal J'$. By construction, $\mathcal P'$ is a closed cover of $X$ which is generating for the action $\Gamma\curvearrowright X$, and it follows from  \Cref{prop:Covers-from-basic-balls-are-effective} that $\mathcal P'$ is an effective cover. 
    \end{proof}

We remark that in the previous proposition we do not claim the cover $\mathcal P'$ to be computable uniformly from a description of $\Gamma\curvearrowright X$, just its existence. 
    
    \begin{proposition}\label{prop:generating-covers-can-be-taken-effective-and-partition-in-zero-dimension}
        If  $\Gamma\curvearrowright X$ admits a generating open cover and $X$ is zero dimensional, then $\Gamma\curvearrowright X$  admits an effective generating cover which is also a partition. 
    \end{proposition}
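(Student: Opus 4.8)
The plan is to mimic the proof of \Cref{prop:generating-covers-can-be-taken-effective}, but to replace the cover by closed balls with a cover by clopen sets furnished by the zero-dimensionality of $X$, and then to disjointify it into a partition. The starting point is a generating open cover $\mathcal P$ of $\Gamma \curvearrowright X$. By \Cref{prop.clopens}, since $X$ is a zero-dimensional recursively compact subset of the computable metric space $\mathcal X$, there is a collection $(C_n)_{n\in\N}$ of finite unions of basic balls such that $(C_n\cap X)_{n\in\N}$ is a basis of clopen sets for $X$. As $\mathcal P$ is open and the $C_n\cap X$ form a basis, the subfamily $\mathcal J$ of those $C_n\cap X$ contained in some element of $\mathcal P$ still covers $X$; by compactness I extract a finite subcover $\mathcal J' = \{C_{i_1}\cap X,\dots,C_{i_m}\cap X\}$. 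Since $\mathcal J'$ refines $\mathcal P$ and any refinement of a generating cover is generating (for any finite $F\subset\Gamma$ the join $\bigvee_{g\in F} g^{-1}\mathcal J'$ refines $\bigvee_{g\in F} g^{-1}\mathcal P$, hence has no larger diameter), $\mathcal J'$ is a generating clopen cover.

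Next I disjointify $\mathcal J'$ in the usual way, setting
\[ Q_k = (C_{i_k}\cap X)\setminus\bigcup_{l<k}(C_{i_l}\cap X) \quad\text{for } 1\leq k\leq m, \]
and discarding those $Q_k$ that are empty. The resulting family is pairwise disjoint and covers $X$, so it is a partition of $X$; and since each $Q_k\subset C_{i_k}\cap X$ lies inside an element of $\mathcal P$, this partition refines $\mathcal P$ and is therefore again generating.

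It remains to check that the partition is effective, i.e. that each $Q_k$ is effectively closed. This is the main point, and it rests on two observations about the clopen sets coming from \Cref{prop.clopens}. First, the proof of that proposition produces its clopen sets in the form $C_n\cap X = \overline{C_n}\cap X$, where $\overline{C_n}$ is the finite union of the closures of the basic balls defining $C_n$; as the closure of a basic ball is effectively closed (\Cref{ex:closed rational ball}) and $X$ is effectively closed, each $C_{i_k}\cap X$ is effectively closed. Second, each complement $X\setminus(C_{i_l}\cap X) = X\setminus C_{i_l}$ is effectively closed because $C_{i_l}$ is a finite union of basic balls, hence effectively open, so its complement intersected with the effectively closed set $X$ is effectively closed. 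Consequently $Q_k = (C_{i_k}\cap X)\cap\bigcap_{l<k}(X\setminus C_{i_l})$ is a finite intersection of effectively closed sets and is therefore effectively closed. This yields an effective generating cover of $\Gamma\curvearrowright X$ which is also a partition, as desired. As in \Cref{prop:generating-covers-can-be-taken-effective}, I only claim existence and make no uniformity claim in a description of the action; note in particular that discarding the empty pieces is harmless, since it does not affect the union nor the refinement property.
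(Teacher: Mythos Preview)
Your proof is correct and takes a genuinely different route from the paper's. The paper first refines the given generating open cover $\mathcal P$ to a clopen \emph{partition} of $X$ (using zero-dimensionality), then approximates each piece by finitely many sets of the form $B\cap X$ with $B$ a basic ball, and finally argues that the resulting partition elements, being closed in $X$, coincide with unions of sets $\overline{B}\cap X$ and are hence effective via \Cref{prop:Covers-from-basic-balls-are-effective}. You instead invoke \Cref{prop.clopens} to obtain clopen sets $C_n\cap X$ that already satisfy $C_n\cap X=\overline{C_n}\cap X$, extract a finite subcover refining $\mathcal P$, and then disjointify.

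What your approach buys is a cleaner verification of effectiveness: each $Q_k$ is visibly a finite intersection of sets of the form $\overline{C}\cap X$ and $X\cap(\mathcal X\setminus C)$, both effectively closed (the latter since $C$ is a finite union of basic balls and $X$, being recursively compact, is effectively closed). The paper's approach, by contrast, must justify that $\overline{P'_i}=\bigcup_{m\in M}\overline{B_m}\cap X$, a step that requires the balls $B_m$ to be chosen with closures disjoint from the other pieces of the partition---a point your use of \Cref{prop.clopens} handles automatically. On the other hand, the paper's route avoids appealing to \Cref{prop.clopens} and works directly with basic balls, which keeps the argument closer to that of \Cref{prop:generating-covers-can-be-taken-effective}.
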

    \begin{proof}
        
    It follows from the hypotheses that $X$ admits a generating open cover $\mathcal P$  which is a partition of $X$. Indeed, let $\mathcal P_0$ be an open cover for $X$  which is generating for $\Gamma\curvearrowright X$. As $X$ is zero-dimensional, there is an open cover $\mathcal P$ of $X$ which refines $\mathcal P_0$ and constitutes a partition of $X$. Observe that the cover $\mathcal P$ is also generating for $\Gamma\curvearrowright X$ because it refines $\mathcal P_0$.

  Repeating the construction in the proof of~\Cref{prop:generating-covers-can-be-taken-effective} with the cover $\mathcal P=\{P_0,\dots,P_n\}$, we obtain a finite open cover $\mathcal J'$ of $X$ which refines $\mathcal P$, and such that every element in $\mathcal J'$ has the form $B\cap X$, where $B$ is a basic ball in $\mathcal X$.
    We define a new cover $\mathcal P'=\{P'_0,\dots,P'_n\}$ by the following condition. For every $i \in \{0,\dots,n\}$, $P'_i$ is the union of all elements in $\mathcal J'$ which are contained in $P_i$. The fact that $\mathcal P$  is a partition of $X$ implies that the same holds for $\mathcal P'$. It is also clear that $\mathcal P'$ is a generating cover. We now verify that $\mathcal P'$  is an effective cover. Indeed, the fact that $\mathcal P'$ is a partition implies that the elements of $\mathcal P'$ are closed in $X$. In particular we can write \[P'_i=\overline{P'_i}=\bigcup_{m\in M} \overline{B_m}\cap X\] where $M$ is a finite subset of $\N$ and $B_m$  are rational balls. Here topological closures can be taken in $X$ and in $\mathcal X$. Both topological closures coincide because $X$ is a closed subset of $\mathcal X$ and we consider $X$ with the subspace topology. It follows from the previous set equality and~\Cref{prop:Covers-from-basic-balls-are-effective} that $\mathcal P'$ is also an effective cover. 
        \end{proof}




	\subsection{Proof of main results}

We start with the following observation for computable actions on zero dimensional spaces.  

    \begin{proposition}\label{cor:cantor-representative}
        Let $\Gamma\curvearrowright X$ be a computable action, where $X$ is zero-dimensional subset of a computable metric space. Then  $\Gamma\curvearrowright X$ is topologically conjugate to a computable action $\Gamma\curvearrowright Y$, where $Y$ is an effectively closed subset of $\{\symb{0},\symb{1}\}^\N$. 
    \end{proposition}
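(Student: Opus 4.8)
The plan is to apply \Cref{thm:compacto-y-cero-dimensional-tiene-copia-en-el-cantor} to transport both the space and the action to the Cantor space. First I would observe that since $\Gamma \curvearrowright X$ is a computable action, the set $X$ is by definition a recursively compact subset of a computable metric space; combined with the hypothesis that $X$ is zero-dimensional, this places us exactly in the setting of \Cref{thm:compacto-y-cero-dimensional-tiene-copia-en-el-cantor}. (If $X=\varnothing$ there is nothing to prove, as $\varnothing$ is effectively closed in $\{\symb{0},\symb{1}\}^\N$; so we may assume $X$ nonempty.) Applying that theorem yields an effectively closed set $Y\subset\{\symb{0},\symb{1}\}^\N$ and a computable homeomorphism $h\colon X\to Y$ whose inverse $h^{-1}\colon Y\to X$ is also computable.

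Next I would transport the action along $h$, defining $\Gamma\curvearrowright Y$ by $g\cdot y = h(g\cdot h^{-1}(y))$ for $g\in\Gamma$ and $y\in Y$. Since $h$ is a bijection, this is a genuine group action, and by construction $h$ is a topological conjugacy between $\Gamma\curvearrowright X$ and $\Gamma\curvearrowright Y$: it is a homeomorphism that intertwines the two actions. It remains only to verify that $\Gamma\curvearrowright Y$ is computable. For this, fix a finite generating set $S$ of $\Gamma$ witnessing the computability of $\Gamma\curvearrowright X$. For each $s\in S$, the transported map $y\mapsto s\cdot y$ equals the composition $h\circ f_s\circ h^{-1}$, where $f_s\colon X\to X$, $f_s(x)=sx$, is computable by hypothesis. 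Since $h$ and $h^{-1}$ are computable and computable functions are closed under composition, each of these maps is computable; as this holds for every $s\in S$, the action $\Gamma\curvearrowright Y$ is computable, and setting $Y$ as above completes the argument.

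This proposition is essentially a corollary of \Cref{thm:compacto-y-cero-dimensional-tiene-copia-en-el-cantor}; the only content beyond that theorem is the elementary principle that conjugating a computable action by a \emph{computable} homeomorphism again produces a computable action. Accordingly, the step I would flag as the genuine crux is not in this proof at all but is inherited from the theorem, namely the computability of the inverse $h^{-1}$, which rests on \Cref{prop.inverse.of-computable-function-on-compatc-space} (computability of the inverse of an injective computable map on a recursively compact domain). Without the computability of $h^{-1}$ the conjugated generators $h\circ f_s\circ h^{-1}$ could fail to be computable, so this is the place where recursive compactness of $X$ is doing the essential work.
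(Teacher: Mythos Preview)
Your proposal is correct and follows exactly the same approach as the paper: apply \Cref{thm:compacto-y-cero-dimensional-tiene-copia-en-el-cantor} to obtain a computable homeomorphism to an effectively closed subset of $\{\symb{0},\symb{1}\}^\N$, then conjugate the action through it. The paper's proof is a single sentence to this effect, and your version simply spells out the conjugation step and the closure of computable maps under composition in more detail.
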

    \begin{proof}
        This follows by conjugating the action $\Gamma\curvearrowright X$ with a computable homeomorphism between $X$ and an effectively closed subset of $\{\symb{0},\symb{1}\}^\N$, whose existence is guaranteed by \Cref{thm:compacto-y-cero-dimensional-tiene-copia-en-el-cantor}.
    \end{proof}

   We are now ready to present the proof of our main result. 
    
    \begin{theorem}[\Cref{thm:zero_dim_effective_extension}]\label{prop:main-result}
		Let $\Gamma\curvearrowright X$ be an EDS, where $\Gamma$ is a finitely generated and recursively presented group. Then $\Gamma\curvearrowright X$ is the topological factor of a computable action of an effectively closed subset of $\{\symb{0},\symb{1}\}^{\NN}$. 
	\end{theorem}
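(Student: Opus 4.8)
The plan is to effectivize the classical zero-dimensional extension recalled above, carrying it out over a free group where a canonical computable structure is available, and to use recursive presentedness of $\Gamma$ precisely at the point where the construction must be descended from the free group back to $\Gamma$. Since being a topological factor is invariant under topological conjugacy, I would first replace $\Gamma\curvearrowright X$ by a computable representative, so that $X$ is a recursively compact subset of a computable metric space $\mathcal X$ and $\Gamma\curvearrowright X$ is computable. Then I fix a finite symmetric generating set $S$, let $F(S)$ be the free group on $S$ with canonical epimorphism $\phi\colon F(S)\to\Gamma$, and consider the induced action $F(S)\curvearrowright X$.

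Next I would use \Cref{lem:lemma-cubiertas-efectivas} to produce a uniformly computable sequence $(\mathcal P_n)_{n\in\NN}$ of effective covers of $X$ of diameter at most $2^{-n}$, with $\mathcal P_{n+1}$ refining $\mathcal P_n$ through a decidable inclusion relation. To each I associate the subshift cover $Y_n=Y(F(S)\curvearrowright X,\mathcal P_n)\subset A_n^{F(S)}$ with $A_n=\{0,\dots,k_n\}$, which is effective uniformly in $n$ by \Cref{prop:effective-covers-give-effective-subshifts}. These subshifts allow labelings that are inconsistent across the fibers of $\phi$, so they carry no $\Gamma$-action yet; to fix this I would intersect with the pullback $\widehat{A_n^\Gamma}$ of the full $\Gamma$-shift, setting $W_n=Y_n\cap\widehat{A_n^\Gamma}$. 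This is the crucial step and the main obstacle: by \Cref{example:fullshift_RP_is_effective} the coherence set $\widehat{A_n^\Gamma}$ is effectively closed, because the constraint $z(w)=z(w')$ whenever $\phi(w)=\phi(w')$ is recursively enumerable exactly when the word problem of $\Gamma$ is, that is, when $\Gamma$ is recursively presented. Hence each $W_n$ is effectively closed uniformly in $n$, and being a pullback subshift it equals $\widehat{Y_n^\Gamma}$ for the genuine $\Gamma$-subshift cover $Y_n^\Gamma=Y(\Gamma\curvearrowright X,\mathcal P_n)$, so the component-wise $F(S)$-shift descends to a computable $\Gamma$-action on $W_n$.

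The refinement relation then induces uniformly computable, $\Gamma$-equivariant relabeling factor maps $\rho_{n+1}\colon W_{n+1}\to W_n$. Invoking \Cref{prop:countable-product-of-EDS} for the component-wise action on $\prod_n W_n$ and \Cref{prop:inverse-limit-of-EDS} for these factor maps, the inverse limit $Z=\varprojlim W_n$ is an effectively closed, hence recursively compact, zero-dimensional subset of $\prod_n A_n^{F(S)}$ carrying a computable $\Gamma$-action. I would then define the factor map $\Phi\colon Z\to X$ by
\[\{\Phi(z)\}=\bigcap_{n\in\NN}\overline{P^n_{y_n(1_{F(S)})}},\qquad z=(y_n)_{n\in\NN},\]
noting that the refinement condition makes these closures nested with diameters tending to zero, so the intersection is a single point. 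A routine verification gives that $\Phi$ is continuous, $\Gamma$-equivariant, and surjective, the last point by lifting each $x\in X$ to a coherent tower of cover elements containing its orbit.

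Finally, since $\Gamma\curvearrowright Z$ is a computable action on a zero-dimensional recursively compact subset of a computable metric space, \Cref{cor:cantor-representative} yields a topologically conjugate computable action $\Gamma\curvearrowright\widetilde X$ on an effectively closed $\widetilde X\subset\{\symb{0},\symb{1}\}^{\NN}$, say via a conjugacy $h\colon\widetilde X\to Z$. Then $\Phi\circ h$ exhibits $\Gamma\curvearrowright X$ as a topological factor of $\Gamma\curvearrowright\widetilde X$, as required. I expect essentially all the effort to concentrate in the second paragraph: the subshift covers live naturally over $F(S)$, and the whole content of the theorem is that recursive presentedness is exactly what allows one to impose $\phi$-coherence effectively, turning an $F(S)$-extension into a computable $\Gamma$-extension.
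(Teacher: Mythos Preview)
Your proposal follows essentially the same strategy as the paper: pass to a computable representative, work over $F(S)$, build subshift covers from the effective cover sequence of \Cref{lem:lemma-cubiertas-efectivas}, intersect with $\widehat{A_n^\Gamma}$ (this is exactly where recursive presentedness enters), take an inverse limit, and transport to $\{\symb{0},\symb{1}\}^{\NN}$ via \Cref{cor:cantor-representative}. The only difference is cosmetic: the paper packages the inverse system as $Z_n\subset Y_0\times\dots\times Y_n$ subject to the nesting condition $P^n_{e_n}\subset\dots\subset P^0_{e_0}$ and uses the projection that drops the last coordinate, whereas you go directly between the $W_n$ via a single relabeling.

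One small point to tighten: you call the $\rho_{n+1}$ ``factor maps'', but surjectivity is not automatic. Refinement only says each $P^{n+1}_i$ lies in \emph{some} $P^n_j$; a given $P^n_j$ need not contain any $P^{n+1}_i$ whose \emph{chosen} parent is $P^n_j$, so a configuration in $W_n$ using such an index may have no $\rho_{n+1}$-preimage. This does not break your argument: the proof of \Cref{prop:inverse-limit-of-EDS} only uses uniform computability of the connecting maps, not surjectivity, so $Z$ is still effectively closed; and you establish surjectivity of $\Phi\colon Z\to X$ directly by lifting each $x\in X$ to a coherent tower (a K\"onig argument on the tree of cover elements whose closures contain $g^{-1}x$ supplies the infinite branch). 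Simply drop the word ``factor'' when describing $\rho_{n+1}$, or remark that \Cref{prop:inverse-limit-of-EDS} applies verbatim with merely computable connecting maps.
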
 
    \begin{proof}
        We start by replacing $\Gamma \curvearrowright X$ by a computable representative, as defined in~\Cref{sec.eds}.  As explained at the beginning of this section, we will construct a topological extension of $\Gamma\curvearrowright X$ as the inverse limit of a sequence of subshifts. 
        
        Let $S$ be a finite set of generators of $\Gamma$ and $F(S)$ be the free group generated by $S$. Let $F(S)\curvearrowright X$ be the action induced from $\Gamma \curvearrowright X$ and notice it is computable. We apply the tools from~\Cref{subsec:subshift-covers} to the action $F(S)\curvearrowright X$. Let $(\mathcal P ^n)_{n\in\N}$ be a uniform sequence of effective covers as in  \Cref{lem:lemma-cubiertas-efectivas}. For each $\mathcal P^n=\{P^n_0,\dots,P^n _{m_n}\}$ we set $A_n$ to be the alphabet $\{0,\dots,m_n\}$, and we let $Y(F(S)\curvearrowright X, \mathcal P ^n)$ be the subshift cover associated to the action $F(S)\curvearrowright X$ and the cover $\mathcal P ^n$. By \Cref{prop:effective-covers-give-effective-subshifts}, each $Y(F(S)\curvearrowright X, \mathcal P ^n)$ is an effective subshift, and the sequence $(Y(F(S)\curvearrowright X, \mathcal P ^n))_{n\in\N}$ is uniformly effective.

        It is possible that some elements in $F(S)$ which are mapped to the identity element in $\Gamma$ (and thus act trivially on $X$), act non-trivially on $Y(F(S)\curvearrowright X, \mathcal P ^n)$. Indeed, if $w\in F(S)$ is mapped to the identity element in $\Gamma$, and there are distinct $i,j$ such that $P^n_{i} \cap P^n_{j} \neq \varnothing$, we could associate distinct elements to $1_{F(S)}$ and $w$, and thus $w$ acts non-trivially on $Y(F(S)\curvearrowright X, \mathcal P ^n)$.
        This is solved as follows. For each $n$, we define $Y_n$ by  
        \[Y_n = Y(F(S)\curvearrowright X, \mathcal P ^n)\cap \widehat{A_n^\Gamma}\subset A_n^{F(S)},\]
        where $\widehat{A_n^\Gamma}$ is the pullback of $A_n^\Gamma$ to $F(S)$. As $\Gamma$ is recursively presented, the set $\widehat{A_n^\Gamma}$ is effectively closed (\Cref{example:fullshift_RP_is_effective}). As $Y_n$ is intersection of two effectively closed sets, then $Y_n$ is effectively closed. Furthermore, as $Y(F(S)\curvearrowright X, \mathcal P ^n)$ is uniform and the second set in the intersection ($\widehat{A_n^\Gamma}$) is the same for each $n$ it follows that $(Y_n)_{n \in \NN}$ is uniformly effectively closed. 

        We now build a zero-dimensional topological extension of $\Gamma \curvearrowright X$, and then we verify the computability of the construction. We define a $F(S)$-subshift $Z_n$ on alphabet $B_n=A_0\times\dots\times A_n$ by the following two conditions:
        \begin{enumerate}
            \item $Z_n \subset Y_0 \times \dots \times Y_n$. 
            \item For each $z\in Z_n$ and for each $w\in F(S)$ with $z(w)=(e_0,\dots,e_n)$ we have \[P^n_{e_n}\subset\dots\subset P^0_{e_0}.\]
        \end{enumerate}


        For each $n$, let $\pi_{n+1}\colon Z_{n+1}\to Z_n$ be the map which removes the last component of the tuple. Then let $Z$ be the inverse limit of dynamical systems 
                  \[Z=\varprojlim Z_n=\{(z_n)\in\prod_{n \in \NN} Z_n\mid\pi_{n+1}(x_{n+1})=x_n, n\geq 1\}\subset\prod_{n \in \NN} B_n^{F(S)}.\]

        We now go into some computability considerations.  First observe that for each $n\in \NN$, $Z_n$ is an effectively closed subset of $B_n^{F(S)}$. As $(Y_n)$ is a uniform sequence of effectively closed sets, we can semi-decide whether the first condition fails. Moreover the second defining condition of $Z_n$ is decidable, uniformly on $n$, by construction of the sequence $(\mathcal P^n)_{n\in\N}$ (\Cref{lem:lemma-cubiertas-efectivas}). Thus $(Z_n)_{n \in \NN}$ is a uniform sequence of effective $F(S)$-subshifts. It is also clear that $(\pi_n)_{n\geq 1}$ is a uniform sequence of computable functions, which are also factor maps. Now we claim that $Z$ is a recursively compact subset of $\prod_{n \in \NN} B_n^{F(S)}$. This follows from three facts:
        \begin{enumerate}
            \item $Z$ is an effectively closed subset of $\prod_{n \in \NN} B_n^{F(S)}$ by \Cref{prop:inverse-limit-of-EDS}.
            \item The product $\prod_{n \in \NN} B_n^{F(S)}$ is recursively compact  by \Cref{thm:Tychonoff_computable}. 
            \item An effectively closed subset of a recursively compact computable metric space is recursively compact (\Cref{prop.equivalence.comp.closed.subset}).
        \end{enumerate}
        The action $F(S)\curvearrowright \prod_{n \in \NN} B_n^{F(S)}$ is computable by \Cref{prop:countable-product-of-EDS}, and then the same holds for $F(S)\curvearrowright Z$. By construction, every element $w\in F(S)$ which corresponds to the identity element in $\Gamma$  acts trivially on $Z$, and thus we can define an action $\Gamma\curvearrowright Z$ by the expression $gz=wz$, where $w\in F(S)$ is any element with $\underline w = g$. It remains to verify that $\Gamma\curvearrowright Z$ is a topological extension of $\Gamma\curvearrowright X$. This is well known, but we sketch the argument for completeness. 
        
        We define a topological factor $\phi\colon Z\to X$ as follows. First, for each $n\in\N$ we define $\nu_n\colon Z\to Y_n$ as the function which sends $z=(z_n)_{n\in\N}$ to the configuration $\nu_n(z)$ given by $\nu_n(z)(w) = (z_n)(w)(n)$, the $n$-th component of $(z_n)(w)\in B_n$. We now define $\phi(z)$ by the expression
            \[\{\phi(z)\}=\bigcap_{n\in\N} P^n_{\nu_n(z)(1_{F(S)})}.\]

        The function $\phi$ is well defined, as a nested intersection of nonempty compact sets with diameter tending to zero must be a singleton. It is straightforward to verify that $\phi$ is continuous, surjective, and it commutes with the actions $\Gamma\curvearrowright X$ and $\Gamma\curvearrowright Z$.

        We have proved that $\Gamma\curvearrowright Z$ is a computable action on a recursively compact and zero-dimensional subset of a computable metric space, and that it is a topological extension of $\Gamma\curvearrowright X$. Now by   \Cref{cor:cantor-representative}, the action $\Gamma\curvearrowright Z$ is topologically conjugate to a computable action on an effectively closed subset of $\{\symb{0},\symb{1}\}^\N$. This finishes the proof. \end{proof}
We end this section by proving that when the action admits a generating partition, the extension can be taken as an effective subshift. In the zero-dimensional regime this is the computable version of the fact that subshifts are precisely the expansive actions on zero-dimensional sets. 
 
    \begin{theorem}\label{prop:expansive-actions-are-factors-of-effective-subshifts}
		Let $\Gamma\curvearrowright X$ be an EDS, where $\Gamma$ is a finitely generated and recursively presented group and the action admits a generating open cover. Then $\Gamma\curvearrowright X$ is a topological factor of an effective $\Gamma$-subshift. 

        Moreover, if $X$ is zero-dimensional, then $\Gamma \curvearrowright X$ is topologically conjugate to an effective $\Gamma$-subshift and the recursively presented hypothesis on $\Gamma$ can be removed. 
	\end{theorem}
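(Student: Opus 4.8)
The plan is to realize the desired extension as the subshift cover associated with a single well-chosen effective cover, rather than as the inverse limit used in \Cref{prop:main-result}. Throughout I fix a finite generating set $S$ of $\Gamma$, replace $\Gamma\curvearrowright X$ by a computable representative, and work with the induced computable action $F(S)\curvearrowright X$ as in \Cref{subsec:subshift-covers}.

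First I would record that an action admitting a generating open cover $\mathcal P$ is expansive: if $\lambda>0$ is a Lebesgue number of $\mathcal P$ and $x\neq y$, then $gx,gy$ cannot lie in a common element of $\mathcal P$ for \emph{every} $g$, since otherwise $x,y$ would belong to the same cell of every join $\bigvee_{g\in F}g^{-1}\mathcal P$ and the generating property would force $x=y$; hence $\sup_{g}d(gx,gy)\geq\lambda$ and $C\isdef\lambda$ is an expansive constant. Using \Cref{lem:lemma-cubiertas-efectivas} I would take an effective cover $\mathcal Q=\{Q_0,\dots,Q_m\}$ of diameter strictly less than $C$, so that $Y(F(S)\curvearrowright X,\mathcal Q)$ is an effective subshift by \Cref{prop:effective-covers-give-effective-subshifts}. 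Exactly as in \Cref{prop:main-result}, elements of $F(S)$ representing $1_\Gamma$ may act nontrivially on this subshift, so I would intersect with the pullback of the full shift on $A=\{0,\dots,m\}$ to obtain
\[ \widehat Y\isdef Y(F(S)\curvearrowright X,\mathcal Q)\cap\widehat{A^\Gamma}, \]
which is effectively closed because $\Gamma$ is recursively presented (\Cref{example:fullshift_RP_is_effective}). By construction every configuration of $\widehat Y$ factors through $\Gamma$, so $\widehat Y=\widehat{Y'}$ for an effective $\Gamma$-subshift $Y'$ in the sense of \Cref{def:effective_subshift_through_free_group}.

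It remains to produce the factor map $\phi\colon Y'\to X$, and here lies the main obstacle. For $y\in Y'$ set $K_y=\bigcap_{g\in\Gamma}g\overline{Q_{y(g)}}$, which is nonempty by definition of the subshift cover. If $x_1,x_2\in K_y$ then $g^{-1}x_1,g^{-1}x_2\in\overline{Q_{y(g)}}$ for every $g$, whence $d(g^{-1}x_1,g^{-1}x_2)\leq\operatorname{diam}\overline{Q_{y(g)}}<C$ for all $g$, and expansiveness forces $x_1=x_2$; thus $K_y$ is a singleton and $\phi(y)$ is well defined. The delicate point is precisely that the closures in the definition of the subshift cover do \emph{not} spoil this argument: both points are driven into the \emph{same} cover element $\overline{Q_{y(g)}}$ at every $g$, and this is what the expansive constant controls (a naive appeal to the diameters of the join cells would fail, since closed join cells can be large). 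A routine compactness argument shows $\phi$ is continuous, and it is clearly surjective (each $x$ admits a name) and $\Gamma$-equivariant, so $\Gamma\curvearrowright X$ is a topological factor of the effective subshift $\Gamma\curvearrowright Y'$.

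For the zero-dimensional case I would instead invoke \Cref{prop:generating-covers-can-be-taken-effective-and-partition-in-zero-dimension} to get a generating effective cover $\mathcal P=\{P_0,\dots,P_m\}$ whose elements restrict to a clopen partition of $X$. Since the $P_i\cap X$ are clopen and disjoint we have $\overline{P_i}\cap X=P_i\cap X$, so $x\mapsto y_x$, where $y_x(g)$ is the unique index with $g^{-1}x\in P_{y_x(g)}$, is a well-defined, continuous, $\Gamma$-equivariant map into $Y(\Gamma\curvearrowright X,\mathcal P)$; it is injective because the generating property makes the join cells $\bigcap_{g\in F}gP_{y_x(g)}$ shrink to points, and its image is all of the subshift cover by the same partition argument, hence it is a conjugacy between compact Hausdorff spaces. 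Finally, the clopen partition forces every configuration of the $F(S)$-subshift cover to factor through $\Gamma$ automatically, so $Y(F(S)\curvearrowright X,\mathcal P)$ is already the pullback of $Y(\Gamma\curvearrowright X,\mathcal P)$ and is effectively closed by \Cref{prop:effective-covers-give-effective-subshifts}; this is exactly why the intersection with $\widehat{A^\Gamma}$, and therefore the recursive presentation hypothesis, can be dropped.
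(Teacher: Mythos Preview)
Your argument is correct, and for the zero-dimensional part it is essentially the same as the paper's. The interesting difference is in the general (recursively presented) case.

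The paper does not pass through expansiveness at all. Instead it invokes \Cref{prop:generating-covers-can-be-taken-effective} to upgrade the given generating open cover to an \emph{effective generating} cover $\mathcal P'$ (obtained by refining the original cover by basic balls and taking closures). It then defines the factor map by $\{\phi(y)\}=\bigcap_n D(y|_{F_n})$ and argues that these sets have diameter tending to zero ``by definition of generating cover''. Your route is different: you first extract the expansive constant $C$ from the generating open cover, then use \Cref{lem:lemma-cubiertas-efectivas} to pick \emph{any} effective cover $\mathcal Q$ of diameter below $C$ (not generating a priori), and finally show $K_y$ is a singleton directly from expansiveness. This bypasses \Cref{prop:generating-covers-can-be-taken-effective} entirely and, as you correctly point out, avoids the delicate issue of checking that the generating property (defined for open covers) survives passage to closures---in the paper this step is asserted ``by construction'' but actually needs a compactness/expansiveness argument of the kind you give. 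On the other hand, the paper's approach keeps the proof uniform with the zero-dimensional case (both use a generating effective cover and the same $D(y)$ formula), whereas you use two slightly different mechanisms. Both arguments intersect with $\widehat{A^\Gamma}$ in the same way to descend from $F(S)$ to $\Gamma$, and your justification for dropping this intersection in the zero-dimensional case---that the clopen partition forces every $F(S)$-name to factor through $\Gamma$---is exactly the content of the paper's observation that the conjugacy preserves trivially-acting elements.
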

    \begin{proof}
        We first prove the result assuming that $\Gamma$ is recursively presented. By \Cref{prop:generating-covers-can-be-taken-effective}, $\Gamma\curvearrowright X$ admits an effective generating cover $\mathcal P$. Now let $S$ be a finite set of generators of $\Gamma$ and let $F(S)$ be the free group generated by $S$. Let $F(S)\curvearrowright X$ be the action induced by $\Gamma\curvearrowright X$, and consider the subshift $Y(F(S) \curvearrowright X, \mathcal P)$ defined in~\Cref{subsec:subshift-covers}. By \Cref{prop:effective-covers-give-effective-subshifts} we have that $Y(F(S) \curvearrowright X, \mathcal P)$ is effective. 
        
        Observe that some elements in $F(S)$ which are mapped to the identity element in $\Gamma$ and thus act trivially on $X$, have a non-trivial action on $Y(F(S) \curvearrowright X, \mathcal P)$. In order to solve this, we define $Y$ by  
        \[Y = Y(F(S) \curvearrowright X, \mathcal P)\cap \widehat{A^\Gamma},\]
        
        where $\widehat{A^\Gamma}$ is the pullback of $A^\Gamma$ in $F(S)$. As $\Gamma$ is recursively presented, the set $\widehat{A^\Gamma}$ is effectively closed (\Cref{example:fullshift_RP_is_effective}), and thus $Y$ is an effective subshift. 
        
        It is a standard fact that $X$ is a topological factor of $Y$ thanks to the fact that $\mathcal P$ is a generating cover. We sketch the argument for completeness.  Let $(F_n)_{n\in\mathbb{N}}$ be an increasing sequence of finite subsets of $F(S)$ whose union is $F(S)$. For each $y\in Y(F(S) \curvearrowright X, \mathcal P)$, we define $D(y)$ as the following subset of $X$: 
		\[ D(y) = \bigcap_{n\in\N } D(y|_{F_n}). \]
        Recall that $D(p)$, for a pattern $p$, was defined in \Cref{subsec:subshift-covers}. For each $y\in Y$ the set $D(y)$ is a singleton, being a nested intersection of nonempty compact sets with diameter tending to zero. Indeed, for each $n$ the set $D(y|_{F_n})$ is contained in the cover $\bigvee _{g\in F_n} g^{-1}\mathcal P$, and the diameters of these elements tend to zero by definition of generating cover. 

        We can now define a topological factor map $\phi\colon Y\to X$   by the expression
		\[\{\phi(y)\}=D(y).\]

        It is straightforward to verify that $\phi$ is continuous, surjective, and it commutes with the actions $\Gamma\curvearrowright X$ and $\Gamma\curvearrowright Y$.  

        We have proved the claim for recursively presented groups. We now prove that if $X$ is zero-dimensional, the hypothesis that $\Gamma$ is recursively presented can be removed. Note that if $X$ is zero-dimensional, then by \Cref{prop:generating-covers-can-be-taken-effective-and-partition-in-zero-dimension} we can assume that $\mathcal P$ is also a partition, which makes $\phi$ injective, and therefore a topological conjugacy. In particular, it preserves the set of group  elements which act trivially. More precisely, recall that $F(S)\curvearrowright X$ is the action induced by $\Gamma\curvearrowright X$. As $F(S)$ has decidable word problem, the previous construction shows that the action $F(S)\curvearrowright X$ is topologically conjugate to an effectively closed $F(S)$-subshift $Y$. Being a topological conjugacy, every element in $F(S)$ which maps to the trivial element of $\Gamma$ must act trivially on $Y$. Thus we can define a $\Gamma$-subshift $Z$ given by \[Z= \{z\in A^\Gamma: \mbox{ there exists } y \in Y \mbox{ where } y(w) = z(\underline{w}) \mbox{ for every } w \in F(S) \}.\] 
        It is straightforward that the pullback $\widehat{Z}$ to $F(S)$ is precisely $Y$, and thus $Z$ is an effective $\Gamma$-subshift as required. \end{proof}
 
	\section{Simulation by subshifts of finite type}\label{sec:simu}

    The main motivation behind~\Cref{thm:zero_dim_effective_extension} is that it can be used to enhance several results in the literature that characterize zero-dimensional effective dynamical systems as topological factors of subshifts of finite type. We shall provide a general definition that will help us to summarize most of these results.

    Consider two groups $\Gamma,H$ and suppose there exists an epimorphism $\psi\colon \Gamma \to H$, that is, that $H$ is a quotient of $\Gamma$. Given an action $H \curvearrowright X$, recall that the \define{induced action}  of $H \curvearrowright X$ to $\Gamma$ is the action $\Gamma \curvearrowright X$ which satisfies that $g\cdot x = \psi(g)\cdot x$ for every $g \in \Gamma$.

    \begin{definition} Let $\Gamma$ and $H$ be finitely generated groups and $\psi\colon \Gamma \to H$ be an epimorphism. We say $\Gamma$ \define{simulates} $H$ (through $\psi$) if for every effectively closed set $X\subset \{\symb{0},\symb{1}\}^{\NN}$ and every computable action $H \curvearrowright X$ there exists a $\Gamma$-SFT $Y$ such that the induced action $\Gamma \curvearrowright X$ is a topological factor of $\Gamma \curvearrowright Y$.
	\end{definition}

    If the map $\psi$ in the definition above is clear then we don't mention it (for instance, if $\Gamma$ is a direct product of $H$ with another group we assume that $\psi$ is the projection to $H$). Furthermore, if $\psi$ is an isomorphism, we say that $\Gamma$ is a \define{self-simulable} group. Let us briefly summarize a few of the simulation results available in the literature

    \begin{theorem}\label{thm:simulation_results}
        The following simulation results hold\footnote{In references~\cite{Hochman2009b,BS2018,Barbieri_2019_DA} the simulation results are stated differently, using subactions rather than an induced action. However, they also hold in our terminology (the result in~\cite{Barbieri_2019_DA} is stated only for expansive actions, but it can be generalized with a slight modification in the proof which will be covered in an upcoming article).}
		\begin{enumerate}
			\item \cite{Hochman2009b} $\Gamma = \ZZ^3$ simulates $\ZZ$.
			\item \cite{BS2018} Let $H$ be a finitely generated group, $d \geq 2$ and $\varphi \colon G \to \operatorname{GL}_d(\ZZ)$. Then the semidirect product $\Gamma = \ZZ^d \rtimes_{\varphi} H$ simulates $H$ (through the projection epimorphism).
			\item \cite{Barbieri_2019_DA} Let $H,G_1,G_2$ be finitely generated groups. Then $\Gamma = H \times G_1 \times G_2$ simulates $H$.
            \item \cite{barbieri2023soficity} Let $H,K$ be finitely generated groups, suppose $H$ is nonamenable and that $K$ has decidable word problem, then $\Gamma = H \times K$ simulates $H$.
            \item \cite{Barbieri_Sablik_Salo_2021} The following groups are self-simulable:
            \begin{itemize}
                \item The direct product of any two finitely generated nonamenable groups.
                \item The general linear group $\operatorname{GL}_n(\ZZ)$ and the special linear group $\operatorname{SL}_n(\ZZ)$ for $n \geq 5$.
        	    \item Thompson's group $V$ and the Brin-Thompson's groups $nV$ for $n \geq 1$.
        	    \item Finitely generated non-amenable Branch groups.
        	    \item The automorphism group $\operatorname{Aut}(F_n)$ and the outer automorphism group $\operatorname{Out}(F_n)$ of the free group $F_n$ on $n$ generators for $n \geq 5$.
        	    \item The braid groups $B_n$ for $n \geq 7$ strands.
            \end{itemize}
		\end{enumerate}
    \end{theorem}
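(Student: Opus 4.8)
The plan is to read the statement as a survey and to isolate the single piece of genuine content, namely the reconciliation of the two formulations of simulation that appear in the literature. Items (4) and (5) are already phrased in the induced-action language (``simulates'' and ``self-simulable'') in their sources \cite{barbieri2023soficity, Barbieri_Sablik_Salo_2021}, so for these nothing is required beyond the citation. The argument therefore concentrates on (1)--(3), whose sources \cite{Hochman2009b, BS2018, Barbieri_2019_DA} state their conclusions using subactions rather than induced actions, exactly as flagged in the footnote.

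First I would record the common algebraic picture. In each of (1)--(3) the acting group fits into a split short exact sequence $1 \to N \to \Gamma \xrightarrow{\psi} H \to 1$, so that $\Gamma = N \rtimes H_0$ for a section $H_0 \leq \Gamma$ on which $\psi$ restricts to an isomorphism onto $H$; here $N = \ker\psi$ equals $\{0\}\times\ZZ^2$ for (1), $\ZZ^d$ for (2), and $G_1 \times G_2$ for (3). The cited subaction results furnish a $\Gamma$-SFT $Y$ and a continuous surjection $\pi\colon Y \to X$ that intertwines the restriction of $\Gamma\curvearrowright Y$ to $H_0$ with the target action $H \curvearrowright X$.

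The key step is to upgrade $\pi$ to a map equivariant for the full induced action $\Gamma \curvearrowright X$, in which $N$ acts trivially and $\psi(g)$ governs the dynamics. Writing $g = n h$ with $n \in N$ and $h \in H_0$, it suffices to verify that $\pi$ is $N$-invariant: granting this one computes $\pi(g y) = \pi(n h y) = \pi(h y) = \psi(h)\,\pi(y) = \psi(g)\,\pi(y)$, using $H_0$-equivariance in the third equality and $\psi(n)=1_H$ in the last. I would establish $N$-invariance by inspecting the constructions themselves: in each case the coordinate directions spanned by $N$ serve only to run the verification of the effective constraints defining $X$ (the Robinson-tiling-and-Turing-machine layer for (1) and (2)) or to host the auxiliary product factors (for (3)), while the encoding of the target point lives on a base layer that the construction forces to be constant along $N$-orbits. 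Since $\pi$ reads off only this base layer, shifting a configuration within $N$ leaves $\pi$ unchanged, giving the required invariance without any modification of $Y$.

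The principal obstacle is exactly this inspection: it is not a formal manipulation but demands recalling enough of each construction to confirm that no ``output'' data is recorded in the $N$-direction, so that the base layer is genuinely $N$-constant. Once this is checked, continuity and surjectivity of the promoted map are inherited directly from $\pi$, and $\Gamma$-equivariance follows from the splitting, completing the passage from the subaction statements of \cite{Hochman2009b, BS2018, Barbieri_2019_DA} to the induced-action statements collected here.
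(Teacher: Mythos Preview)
The paper provides no proof of this theorem: it is a survey statement assembled from citations, and the only commentary is the footnote acknowledging that the sources for (1)--(3) use the subaction formulation rather than the induced-action one, with the translation and the non-expansive extension of (3) deferred to the literature and an ``upcoming article''. Your proposal correctly identifies this as the sole piece of content and gives a reasonable outline of how the translation goes, which is more than the paper itself offers.

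Your mechanism---writing $\Gamma = N \rtimes H_0$, checking that the factor map $\pi$ produced by the cited constructions is constant along $N$-orbits, and then promoting $H_0$-equivariance to equivariance for the induced $\Gamma$-action---is the right shape and is how one would actually carry out the verification. Two remarks are worth making. First, you rightly flag that the $N$-invariance of $\pi$ is not formal and requires opening each construction; this is honest, but it means your argument is really a pointer to three separate checks rather than a self-contained proof. Second, you do not address the specific caveat in the footnote about item (3): the source \cite{Barbieri_2019_DA} only treats expansive actions, and the extension to general computable actions on effectively closed subsets of $\{0,1\}^{\NN}$ is said to require a modification of the proof, not merely a reinterpretation of its output. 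Your $N$-invariance argument, even if it succeeds, would only recover the expansive case of (3) in the induced-action language; the full statement as written needs the additional input the footnote alludes to.
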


    We can enhance all of the above results using~\Cref{thm:zero_dim_effective_extension}. 

    \begin{theorem}[\Cref{thm:simulation-by-SFTs-enhanced-version-GOTY}]
        Let $\Gamma,H$ be finitely generated groups and $\psi\colon \Gamma \to H$ be an epimorphism. Suppose that $H$ is recursively presented, then $\Gamma$ simulates $H$ if and only if for every effective dynamical system $H \curvearrowright X$ the induced action of $\Gamma$ is a topological factor of a $\Gamma$-SFT.
\end{theorem}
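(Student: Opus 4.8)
The plan is to prove both directions of the equivalence, with the forward direction being essentially immediate and the backward direction being the substantive one that relies on \Cref{thm:zero_dim_effective_extension}.

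First I would handle the ``only if'' direction. Suppose $\Gamma$ simulates $H$, and let $H \curvearrowright X$ be an arbitrary effective dynamical system. Since $H$ is recursively presented and finitely generated, I can apply \Cref{thm:zero_dim_effective_extension} to the EDS $H \curvearrowright X$: this produces an effectively closed zero-dimensional set $\widetilde{X} \subset \{\symb{0},\symb{1}\}^{\NN}$ together with a computable action $H \curvearrowright \widetilde{X}$ such that $H \curvearrowright X$ is a topological factor of $H \curvearrowright \widetilde{X}$. Now I invoke the hypothesis that $\Gamma$ simulates $H$ through $\psi$: applied to the computable action $H \curvearrowright \widetilde{X}$ on the effectively closed set $\widetilde{X}$, it yields a $\Gamma$-SFT $Y$ such that the induced action $\Gamma \curvearrowright \widetilde{X}$ is a topological factor of $\Gamma \curvearrowright Y$. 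Composing the two factor maps (and noting that inducing through $\psi$ is functorial, so the induced action $\Gamma \curvearrowright X$ is a factor of the induced action $\Gamma \curvearrowright \widetilde{X}$) gives that the induced action $\Gamma \curvearrowright X$ is a topological factor of the $\Gamma$-SFT $Y$, as required. The one point to verify carefully here is that factor maps are preserved under the induction operation, which is immediate from the definition since inducing only changes which group elements act, not the underlying space or the commutation relation.

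For the ``if'' direction, I would argue almost trivially by specialization. Assume that for every EDS $H \curvearrowright X$ the induced action $\Gamma \curvearrowright X$ is a topological factor of a $\Gamma$-SFT. To show $\Gamma$ simulates $H$, take any effectively closed $X \subset \{\symb{0},\symb{1}\}^{\NN}$ and any computable action $H \curvearrowright X$. By \Cref{example:effective_subshift_is_EDS} and the surrounding discussion, such a computable action on a recursively compact (equivalently, effectively closed, by \Cref{prop.equivalence.comp.closed.subset}) subset of the Cantor space is in particular an EDS, since it is literally a computable representative of its own conjugacy class. Hence the hypothesis applies directly and yields the desired $\Gamma$-SFT factoring onto the induced action $\Gamma \curvearrowright X$, which is exactly the definition of $\Gamma$ simulating $H$.

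The main obstacle, and really the only nontrivial content, is the forward direction's reliance on \Cref{thm:zero_dim_effective_extension}, whose hypotheses must be met: this is precisely why the theorem assumes $H$ is recursively presented and finitely generated, so that the zero-dimensional computable extension exists. The backward direction requires no such assumption and works for arbitrary groups, since it only needs the elementary observation that computable actions on effectively closed Cantor subsets are themselves EDS. I expect the write-up to be short, with the only care needed being the bookkeeping of the two composed factor maps and the functoriality of induction through $\psi$.
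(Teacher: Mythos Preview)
Your proposal is correct and follows essentially the same approach as the paper: the ``if'' direction is the trivial specialization (a computable action on an effectively closed subset of Cantor space is its own computable representative, hence an EDS), and the ``only if'' direction applies \Cref{thm:zero_dim_effective_extension} to the EDS $H\curvearrowright X$, then uses simulation to get the $\Gamma$-SFT, then composes factor maps after inducing through $\psi$. The only cosmetic difference is that you spell out the functoriality of induction and the composition of factor maps more explicitly than the paper does.
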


    \begin{proof}
        Suppose first that for every EDS $H \curvearrowright X$ the induced action of $\Gamma$ is a topological factor of a $\Gamma$-SFT. As a computable action on an effectively closed zero-dimensional set is in particular an EDS, it follows that $\Gamma$ simulates $H$. 
        
        Conversely, let $H \curvearrowright X$ be an EDS. As $H$ is recursively presented,~\Cref{thm:zero_dim_effective_extension} implies that there exists an effectively closed set $\widetilde{X}\subset \{\symb{0},\symb{1}\}^{\NN}$ and a computable action $H \curvearrowright \widetilde{X}$ such that $H \curvearrowright X$ is a topological factor of $H \curvearrowright \widetilde{X}$. Taking the induced action of $\Gamma$ of these two actions it follows that $\Gamma \curvearrowright X$ is a topological factor of $\Gamma \curvearrowright\widetilde{X}$. As $\Gamma$ simulates $H$, there exists a $\Gamma$-SFT $Y$ which factors onto $\Gamma \curvearrowright \widetilde{X}$ and thus onto $\Gamma \curvearrowright X$. 
    \end{proof}

    Let us illustrate how~\Cref{thm:simulation-by-SFTs-enhanced-version-GOTY} can be applied with a few examples.
    
	\begin{example}
		 As mentioned in~\Cref{example:torus_matrix_multiplication}, the action $\operatorname{GL}_n(\ZZ) \curvearrowright \RR^n/\ZZ^n$ by left matrix multiplication is an EDS. For $n \geq 5$ the group $\operatorname{GL}_n(\ZZ)$ is both recursively presented and self-simulable, thus it follows by~\Cref{thm:simulation-by-SFTs-enhanced-version-GOTY} that there exists a $\operatorname{GL}_n(\ZZ)$-SFT which topologically factors onto $\operatorname{GL}_n(\ZZ) \curvearrowright \RR^n/\ZZ^n$. 
	\end{example}

    \begin{example}
        Let $\{\beta_i\}_{i=1}^4$ be four computable points in $\RR/\ZZ$. Consider $F_2 \times F_2 = \langle a_1,a_2 \rangle \times \langle a_3,a_4 \rangle$ and the action $F_2 \times F_2 \curvearrowright \RR/\ZZ$ given by $a_i\cdot x = x+\beta_i \bmod{\ZZ}$ for $i \in \{1,\dots,4\}$ and remark that this action is in fact the pullback of an action $\ZZ^4 \curvearrowright \RR/\ZZ$.  As $F_2\times F_2$ is recursively presented and self-simulable, it follows by~\Cref{thm:simulation-by-SFTs-enhanced-version-GOTY} that there exists an $(F_2\times F_2)$-SFT which topologically factors onto $F_2 \times F_2 \curvearrowright \RR/\ZZ$. 
    \end{example}

	\begin{example} Let $G$ be a group, $n \geq 2$ and consider $\Delta_n(G) \subset G^n$ the space of all $n$-tuples of elements of $G$ whose product is trivial, that is \[ \Delta_n(G) = \{ (x_1,\dots,x_n)\in G^n : x_1\cdots x_n = 1_{G}\}\]    
	The braid group $B_n$ on $n$ strands acts on $\Delta_n(G)$ by \[\sigma_i (x_1,\dots,x_{i-1},x_{i},x_{i+1},\dots,x_n) = (x_1,\dots,x_{i-1},x_{i+1}, x_{i+1}^{-1}x_ix_{i+1},x_{i+2},\dots,x_n). \]  
	Where $1 \leq i \leq n-1$ and $\sigma_i$ is the element of $B_n$ which crosses strands $i$ and $i+1$ (for the definition of braid group, refer to~\cite{Birman_Brendle_2005_braidgroupssurvey}).	
	If we let $G$ be any compact topological group which admits a computable structure such that group multiplication is a computable map, for instance the group of unitary complex matrices $G = U(k) \leqslant \operatorname{GL}_k(\CC)$ for some $k \geq 1$, then $\Delta_n(G)$ is an effectively closed set of $G^n$ and the action $B_n \curvearrowright \Delta_n(G)$ is computable. It follows that if $n \geq 7$ then there exists a $B_n$-SFT which topologically factors onto $B_n \curvearrowright \Delta_n(G)$.
	\end{example}

  In~\cite{Barbieri_Sablik_Salo_2021} it was proven that Thompson's group $F$ (see~\Cref{ex:thompson}) is self-simulable if and only if $F$ is non-amenable, which is a longstanding open question. We can therefore strengthen the characterization of the potential amenability of $F$ in the following way:

    \begin{corollary}
        Thompson's group $F$ is amenable if and only if there exists an EDS $F \curvearrowright X$ which is not the topological factor of an $F$-SFT.
    \end{corollary}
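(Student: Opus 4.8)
The plan is to deduce this corollary directly from~\Cref{thm:simulation-by-SFTs-enhanced-version-GOTY}, instantiated at $\Gamma = H = F$ with $\psi$ the identity automorphism, combined with the characterization established in~\cite{Barbieri_Sablik_Salo_2021} that $F$ is self-simulable if and only if $F$ is non-amenable. No new dynamical construction is needed; the argument is purely a chaining of two biconditionals.

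First I would verify the hypothesis of~\Cref{thm:simulation-by-SFTs-enhanced-version-GOTY}. Thompson's group $F$ is finitely presented, and hence recursively presented, so the theorem applies with $H = F$. Taking $\psi = \operatorname{id}$, the phrase ``$\Gamma$ simulates $H$'' becomes, by definition, the assertion that $F$ is self-simulable, and the induced action of $\Gamma = F$ coincides with the original action $F \curvearrowright X$. Thus~\Cref{thm:simulation-by-SFTs-enhanced-version-GOTY} specializes to the biconditional
\[
F \text{ is self-simulable} \iff \text{every EDS } F \curvearrowright X \text{ is a topological factor of an } F\text{-SFT}.
\]
Negating both sides yields that $F$ fails to be self-simulable exactly when there exists an EDS $F \curvearrowright X$ which is not a topological factor of an $F$-SFT.

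Finally I would invoke the result of~\cite{Barbieri_Sablik_Salo_2021}, which gives that $F$ is self-simulable if and only if $F$ is non-amenable, equivalently that $F$ is \emph{not} self-simulable if and only if $F$ is amenable. Chaining this equivalence with the one obtained above produces precisely the claimed statement: $F$ is amenable if and only if there is an EDS $F \curvearrowright X$ that is not the topological factor of an $F$-SFT. I do not anticipate a genuine obstacle here, since every ingredient is an already-established biconditional; the only points requiring care are confirming that $F$ meets the recursive-presentation hypothesis (which it does, being finitely presented) and correctly identifying ``$F$ simulates $F$ through the identity'' with self-simulability, which holds by definition.
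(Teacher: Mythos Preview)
Your proposal is correct and follows essentially the same approach as the paper: the corollary is stated immediately after the sentence recalling from~\cite{Barbieri_Sablik_Salo_2021} that $F$ is self-simulable iff $F$ is non-amenable, and the intended (implicit) proof is precisely to combine that biconditional with~\Cref{thm:simulation-by-SFTs-enhanced-version-GOTY} applied to $\Gamma=H=F$ via the identity. Your verification that $F$ is finitely (hence recursively) presented and your unpacking of ``$F$ simulates $F$'' as self-simulability are exactly the checks needed.
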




    In the literature, there are a few simulation results which apply exclusively to expansive maps. More precisely, let $\Gamma$ and $H$ be finitely generated groups and $\psi\colon \Gamma \to H$ be an epimorphism. We say $\Gamma$ \define{simulates expansive actions of} $H$ (through $\psi$) if for every effective subshift $X\subset A^{H}$ there exists a $\Gamma$-SFT $Y$ such that the pullback subshift $\Gamma \curvearrowright X$ is a topological factor of $\Gamma \curvearrowright Y$. The most famous results are due to Aubrun and Sablik~\cite{AubrunSablik2010} and Durand, Romaschenko and Shen~\cite{DurandRomashchenkoShen2010} which state that $\ZZ^2$ simulates expansive actions of $\ZZ$ (through the projection epimorphism).

    In light of~\Cref{prop:expansive-actions-are-factors-of-effective-subshifts}, we can improve the expansive simulation theorems in the following way

    \begin{theorem}
        Let $\Gamma,H$ be finitely generated groups and $\psi\colon \Gamma \to H$ be an epimorphism. Suppose that $H$ is recursively presented, then $\Gamma$ simulates expansive actions of $H$ if and only if for every effective dynamical system $H \curvearrowright X$ which admits a generating cover (in particular, every expansive EDS) the induced action of $\Gamma$ is a topological factor of a $\Gamma$-SFT.
    \end{theorem}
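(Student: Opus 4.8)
The plan is to mirror the proof of \Cref{thm:simulation-by-SFTs-enhanced-version-GOTY} almost verbatim, replacing the single appeal to \Cref{thm:zero_dim_effective_extension} by its expansive counterpart \Cref{prop:expansive-actions-are-factors-of-effective-subshifts}. The statement is an ``if and only if'', so I would treat the two implications separately, and in each case reduce to already-established facts rather than redoing any construction.

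For the backward implication, I would assume that every EDS $H \curvearrowright X$ admitting a generating open cover induces a $\Gamma$-action that is a topological factor of a $\Gamma$-SFT, and deduce that $\Gamma$ simulates expansive actions of $H$. Starting from an arbitrary effective subshift $X \subset A^H$, I note that it is an EDS by \Cref{example:effective_subshift_is_EDS}, and that it carries a generating open cover: the clopen partition $\{[a] : a \in A\}$ by the symbol read at the identity is generating since the shift is expansive, so the joins $\bigvee_{g\in F} g^{-1}\mathcal{P}$ over finite $F \Subset H$ separate points. The hypothesis then applies directly and yields a $\Gamma$-SFT factoring onto the induced action $\Gamma \curvearrowright X$, which is precisely the definition of $\Gamma$ simulating expansive actions of $H$.

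For the forward implication, I would take an arbitrary EDS $H \curvearrowright X$ admitting a generating open cover. Since $H$ is recursively presented, \Cref{prop:expansive-actions-are-factors-of-effective-subshifts} (applied with acting group $H$) produces an effective $H$-subshift $W \subset A^H$ together with an $H$-equivariant topological factor map $W \to X$. As $\Gamma$ simulates expansive actions of $H$, there is a $\Gamma$-SFT $Y$ whose induced $\Gamma$-action factors onto the induced action $\Gamma \curvearrowright W$. The factor map $W \to X$ is $H$-equivariant and hence $\Gamma$-equivariant for the induced actions, because both $\Gamma$-actions are defined by precomposition with $\psi$; composing $\Gamma \curvearrowright Y \twoheadrightarrow \Gamma \curvearrowright W \twoheadrightarrow \Gamma \curvearrowright X$ then exhibits $\Gamma \curvearrowright X$ as a topological factor of the $\Gamma$-SFT $Y$.

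I do not expect a genuine obstacle here: the argument is structurally routine once \Cref{prop:expansive-actions-are-factors-of-effective-subshifts} is available. The only points deserving care are verifying that an effective subshift genuinely admits a generating cover (immediate from expansiveness of the shift) and checking that $H$-equivariance of the intermediate map upgrades to $\Gamma$-equivariance of the induced actions. The recursive presentability of $H$ is used exactly once, to license the invocation of \Cref{prop:expansive-actions-are-factors-of-effective-subshifts}, exactly as recursive presentability of $H$ was used to invoke \Cref{thm:zero_dim_effective_extension} in the proof of \Cref{thm:simulation-by-SFTs-enhanced-version-GOTY}.
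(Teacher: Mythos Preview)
Your proposal is correct and matches the paper's intended argument: the paper does not spell out a proof for this theorem but introduces it with ``In light of \Cref{prop:expansive-actions-are-factors-of-effective-subshifts}, we can improve the expansive simulation theorems in the following way'', signalling exactly the substitution you describe---rerunning the proof of \Cref{thm:simulation-by-SFTs-enhanced-version-GOTY} with \Cref{prop:expansive-actions-are-factors-of-effective-subshifts} in place of \Cref{thm:zero_dim_effective_extension}. Your handling of both implications, including the observation that an effective subshift is an EDS with a generating cover and that $H$-equivariance passes to $\Gamma$-equivariance for induced actions, is precisely what is needed.
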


 \section{Algebraic actions as effective dynamical systems}\label{sec:algebraic actions}

In this section we will show that, under mild conditions, algebraic actions are effective dynamical systems and thus we can apply the results of the previous sections on them.

\begin{definition}
    Let $\Gamma$ be a countable group and $X$ be a compact metrizable abelian group. We say that $\Gamma \curvearrowright X$ is an \define{algebraic action} if $\Gamma$ acts by continuous automorphisms of $X$.
\end{definition}

Next we provide a few examples to give an idea of what algebraic actions look like. The reader can find a plethora of interesting examples in~\cite{Schmidt1995}.

\begin{example}
    Let $X = (\RR/\ZZ)^2$ be the torus (with pointwise addition). For every $A \in \operatorname{GL}_2(\ZZ)$ the action $\ZZ \curvearrowright X$ given by $n\cdot x = A^{n}x$ is algebraic.
\end{example}

\begin{example}\label{example:algebraic_action}
    Let $\Gamma$ be a countable group and consider $X = (\RR/\ZZ)^{\Gamma}$ with pointwise addition as the group operation. Then $X$ is a compact metrizable abelian group and the shift action $\Gamma \curvearrowright X$ given by $(g\cdot x)(h) = x(g^{-1}h)$ for every $x \in X$ and $g,h \in \Gamma$ is algebraic.
\end{example}

\begin{example}
    Let $e_1 = (1,0), e_2 = (0,1)$ in $\ZZ^2$ and consider \[X = \{ x \in (\RR/\ZZ)^{\ZZ^2} : \mbox{for every } u \in \ZZ^2, x(u)+x(u+e_1)+x(u+e_2) = 0 \bmod{\ZZ}\}.\] 
    Then $X$ is a compact metrizable abelian group (with pointwise addition) and the shift action $\ZZ^2 \curvearrowright X$ is algebraic.
\end{example}

An interesting property of algebraic dynamical systems is that they can all be represented in a canonical way through the use of Pontryagin duality. This identification is well established in the literature (it can be found for $\ZZ^d$ in~\cite{Schmidt1995} and for a general countable group in~\cite[Chapter 13]{KerrLiBook2016}), so we shall provide a relatively short account of the main facts. A reader who is already familiar with the basics of algebraic actions of countable groups can skip directly to the paragraph above the definition of recursively presented algebraic action (\Cref{def:recpresented_algebraic_action}).

Let $\Gamma$ be a countable group. The \define{integral group ring} $\ZZ[\Gamma]$ (usually denoted in the literature as $\ZZ\Gamma$ to simplify the notation) is the set of all finitely supported maps $p \colon \Gamma \to \ZZ$ endowed with pointwise addition and with polynomial multiplication. That is, for $p,q \in \ZZ[\Gamma]$ and $g\in \Gamma$ then $(p+q)(g) = p(g)+q(g)$ and $p\cdot q (g) = \sum_{h \in \Gamma} p(gh^{-1})q(h)$. For this reason, we usually denote elements $p \in \ZZ[\Gamma]$ as finite formal sums $p = \sum_{i \in I} c_ig_i$ with $c_i \in \ZZ, g_i \in \Gamma$. We also endow $\ZZ[\Gamma]$ with the $*$-operation given by $p^* = \sum_{i \in I}c_ig_i^{-1}$. We remark that for $x \in (\RR/\ZZ)^{\Gamma}$ and $p = \sum_{i \in I} c_ig_i \in \ZZ[\Gamma]$ the product $x p^*\in (\RR/\ZZ)^{\Gamma}$ is well defined and given for every $h \in \Gamma$ by \[ (x p^*)(h) = \sum_{i \in I} c_ix(hg_i) \bmod{\ZZ}.  \]

Let us recall that for a locally compact abelian group $X$, its \define{Pontryagin dual} is the space $\widehat{X}$ of all continuous homomorphisms from $X$ to $\RR/\ZZ$. Notice that if we endow $\widehat{X}$ with the topology of uniform converge on compact sets and with pointwise addition, then $\widehat{X}$ is also a locally compact abelian group whose elements are called \define{characters}. We recall two well-known properties, their proofs can be found for instance in~\cite{morris_1977}. First, that $X$ is a compact metrizable abelian group if and only if $\widehat{X}$ is countable and discrete. Second, that there exists an isomorphism (as topological groups, and through the canonical evaluation map) between $X$ and its double dual $\widehat{\widehat{\makebox{$X$}}}$.

Now, there is a natural correspondence between algebraic actions $\Gamma \curvearrowright X$ and actions $\Gamma \curvearrowright \widehat{X}$ by automorphisms by letting $(g\cdot \chi)(x) = \chi(g^{-1}x)$ for every $g \in \Gamma$, $x\in X$ and $\chi \in \widehat{X}$. Moreover, $\widehat{X}$, endowed with its corresponding action, has a natural structure of countable left $\ZZ[\Gamma]$-module where for $p = \sum_{i \in I} c_ig_i \in \ZZ[\Gamma]$ and $\chi \in \widehat{X}$ the operation is given by \[ p \cdot \chi = \sum_{i \in I} c_i (g_i\cdot \chi).   \]

Conversely, given a countable left $\ZZ[\Gamma]$-module $M$ endowed with the discrete topology, there is a natural action by (continuous) automorphisms $\Gamma \curvearrowright M$ given by left multiplication by the monomials $p_g = 1\cdot g$ using the module product operation, and as before, this induces an action by continuous automorphisms in $\widehat{M}$. In particular, this means that there is a one-to-one correspondence (up to isomorphism) between algebraic actions of $\Gamma$ and countable left $\ZZ[\Gamma]$-modules.

An algebraic action $\Gamma \curvearrowright X$ is called \define{finitely generated} if its associated $\ZZ[\Gamma]$-module is finitely generated. In this case we can identify the module up to isomorphism with $\ZZ[\Gamma]^n/J$ for some integer $n \geq 1$ and $J$ a left-submodule of the free module $\ZZ[\Gamma]^n$. Passing through the Pontryagin dual, this means that in this case, without loss of generality, we can identify \[ X = \{ (x_1,\dots,x_n) \in ((\RR/\ZZ)^{\Gamma})^n : \sum_{i=1}^n x_i p_i^* = 0 \mbox{ for every } (p_1,\dots,p_n) \in J\}. \]
With the action $\Gamma\curvearrowright \widehat{\widehat{\makebox{$X$}}}$ being the left shift as in~\Cref{example:algebraic_action}. An algebraic action $\Gamma \curvearrowright X$ is called \define{finitely presented} if the associated $\ZZ[\Gamma]$-module is finitely presented. In this case the module is isomorphic to $\ZZ[\Gamma]^n / \ZZ[\Gamma]^k P$ for some $n \in \NN$ and a $k \times n$ matrix $P \in \mathcal{M}_{k \times n}(\ZZ[\Gamma])$. Therefore in this case we can identify \[ X = \{ (x_1,\dots,x_n) \in ((\RR/\ZZ)^{\Gamma})^n : \sum_{i=1}^n x_i P_{i,j}^* = 0 \mbox{ for every } j \in \{1,\dots,k\}\}. \]

We remark that the three examples given at the start of this section are finitely presented algebraic actions.

In order to state our results in their full generality, we will introduce an intermediate notion for algebraic actions which we call being ``recursively presented''. Let $S$ be a finite set of generators for a group $\Gamma$ and for $w \in S^*$ denote by $\underline{w}$ its corresponding element of $\Gamma$. Let $\ZZ[S^*]$ be the space of finitely supported maps $p \colon S^* \to \ZZ$. To each map $p$ we can associate $\underline{p}\in \ZZ[\Gamma]$ by letting $\underline{p}= \sum_{w \in \supp(p)} p(w)\underline{w}$. 

\begin{definition}\label{def:recpresented_algebraic_action}
    Let $\Gamma$ be a finitely generated group and $S$ a finite set of generators. A finitely generated algebraic action $\Gamma \curvearrowright X$ is called \define{recursively presented} if there exists $n \in \NN$ and a computable total map $g \colon \NN \to \ZZ[S^*]^n$ such that up to isomorphism the action is the shift map on \[  X = \{ (x_1,\dots,x_n) \in ((\RR/\ZZ)^{\Gamma})^n : \sum_{i=1}^n x_i\underline{g(k)_i}^* = 0 \mbox{ for every } k \in \NN\}. \]
\end{definition}

Notice that finitely presented algebraic actions are recursively presented, as they can be represented by the maps $g\colon \NN \to \ZZ[S^*]^n$ which are eventually the zero of $\ZZ[S^*]^n$ and thus computable.

\begin{theorem}\label{theorem:algebraic_actions_are_EDS}
    Let $\Gamma$ be a finitely generated and recursively presented group. Every recursively presented algebraic action $\Gamma \curvearrowright X$ is an EDS.
\end{theorem}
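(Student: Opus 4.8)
The plan is to follow the pullback strategy used for effective subshifts in \Cref{def:effective_subshift_through_free_group}, adapting it to the algebraic setting. Fix the finite generating set $S$ and the canonical epimorphism $\phi\colon F(S)\to\Gamma$ from the free group, and let $\widehat\phi\colon ((\RR/\ZZ)^{\Gamma})^n\to ((\RR/\ZZ)^{F(S)})^n$ be the coordinate-wise map $\widehat\phi(x)_i(w)=x_i(\phi(w))$. Since $F(S)$ has decidable word problem, the index set $F(S)\times\{1,\dots,n\}$ is a named set and $((\RR/\ZZ)^{F(S)})^n$ is a computable metric space which, being a countable product of copies of the recursively compact circle $\RR/\ZZ$, is itself recursively compact by \Cref{thm:Tychonoff_computable}. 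I will show that $\widehat{X}\isdef \widehat\phi(X)$ is an effectively closed (hence, by \Cref{prop.equivalence.comp.closed.subset}, recursively compact) subset on which $\Gamma$ acts computably, and that $\widehat\phi$ is a $\Gamma$-equivariant homeomorphism; this exhibits a computable representative and so proves that $\Gamma\curvearrowright X$ is an EDS.

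The central step is to describe $\widehat{X}$ by two uniformly effective families of constraints. First, the \emph{consistency} constraints, which cut out the image of $\widehat\phi$: a point $\widehat x$ lies in $\widehat\phi(((\RR/\ZZ)^{\Gamma})^n)$ if and only if $\widehat x_i(w)=\widehat x_i(w')$ whenever $\phi(w)=\phi(w')$, for all $i$. Since $\Gamma$ is recursively presented, the set of pairs $(w,w')\in F(S)^2$ with $\phi(w)=\phi(w')$ (equivalently $\underline{w^{-1}w'}=1_\Gamma$) is recursively enumerable; for each such pair the set $\{\widehat x: \widehat x_i(w)\neq \widehat x_i(w')\}$ is effectively open, as the two coordinate projections are computable and the complement of the diagonal of $\RR/\ZZ\times\RR/\ZZ$ is effectively open. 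The union of all these sets is therefore effectively open, so the consistency locus is effectively closed. Second, the \emph{module relations}: for $\widehat x$ coming from a genuine $\Gamma$-configuration, the identity $\sum_i x_i\underline{g(k)_i}^*=0$ translates, using $\widehat x_i(wv)=x_i(\phi(w)\underline v)$, into the requirement that for every $k\in\NN$ and every $w\in F(S)$ one has $\sum_i\sum_{v\in\supp(g(k)_i)} g(k)_i(v)\,\widehat x_i(wv)=0 \bmod \ZZ$. Because $g$ is computable and $F(S)$ has decidable word problem, the finite list of coordinates and integer coefficients appearing in each relation $(k,w)$ can be computed, the map evaluating the corresponding finite $\ZZ$-combination into $\RR/\ZZ$ is computable, and hence the set of $\widehat x$ violating relation $(k,w)$ is effectively open uniformly in $(k,w)$. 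Thus the module-relation locus is effectively closed, and $\widehat{X}$, being the intersection of these two effectively closed sets, is effectively closed.

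It remains to handle the dynamics and assemble the conjugacy. The $F(S)$-shift action on $((\RR/\ZZ)^{F(S)})^n$ is computable, since each generator acts by a computable relabeling of coordinates, and it restricts to a computable action on $\widehat{X}$. The consistency constraints guarantee that every $w_0\in F(S)$ with $\phi(w_0)=1_\Gamma$ acts trivially on $\widehat{X}$, so the $F(S)$-action descends to a well-defined action $\Gamma\curvearrowright\widehat{X}$ whose generators act by the computable maps of the chosen lifts in $S$; hence $\Gamma\curvearrowright\widehat{X}$ is computable. Finally, $\widehat\phi\colon X\to\widehat{X}$ is a continuous bijection between compact Hausdorff spaces, hence a homeomorphism, and it is $\Gamma$-equivariant by construction, so $\Gamma\curvearrowright X$ is topologically conjugate to the computable action $\Gamma\curvearrowright\widehat X$. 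I expect the main obstacle to be precisely the consistency constraints: this is the only place where the recursive presentation of $\Gamma$ is genuinely used, and it is what allows the a priori uncomputable identifications induced by $\phi$ to be enumerated and thereby folded into an effectively closed description of $\widehat X$.
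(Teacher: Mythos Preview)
Your proposal is correct and follows essentially the same approach as the paper: pull back the configurations to a computable index set (you use $F(S)$, the paper uses $S^*$ with $S$ symmetric), show that the image is cut out by two uniformly effective families of closed constraints---consistency (using the recursive presentation of $\Gamma$) and the module relations (using the computability of $g$)---and then verify that the shift action descends to a computable $\Gamma$-action conjugate to the original one. The only cosmetic difference is your choice of $F(S)$ over $S^*$; both carry a decidable word problem and yield the same argument.
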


\begin{proof}
    Fix $S$ a finite set of generators of $\Gamma$. Consider $n \in \NN$ and a computable total map $g\colon \NN \to \ZZ[S^*]^n$ which define the recursively presented algebraic action $\Gamma \curvearrowright X$. As the space $(\RR/\ZZ)$ is recursively compact (with the structure given by the rationals and the euclidean metric), it follows by~\Cref{thm:Tychonoff_computable} that the product space $((\RR/\ZZ)^{S^*})^n$ is also recursively compact. For $x \in (\RR/\ZZ)^{S^*}$ and $p \in \ZZ[S^*]$ we formally define $xp^* \in (\RR/\ZZ)^{S^*}$ by $(xp^*)(w) = \sum_{u \in \supp(p)} x(wu)p(u)$. Notice that for $w \in S^*$ we have $(x\underline{p}^{*})(\underline{w}) = (xp^*)(w)$.
    
    Consider the space $Y$ of all $(x_1,\dots,x_n)\in  ((\RR/\ZZ)^{S^*})^n$ which satisfy the following two conditions:
    \begin{enumerate}
        \item For every $u,v \in S^*$ such that $\underline{u}=\underline{v}$, and for every $i \in \{1,\dots,n\}$ we have $x_i(u) = x_i(v)$.
        \item For every $k \in \NN$ we have that $\sum_{i=1}^n x_i g(k)_i^* = 0$.
    \end{enumerate}

    We claim that $Y$ is an effectively closed subset of $((\RR/\ZZ)^{S^*})^n$. Indeed, as $\Gamma$ is recursively presented, the set $\texttt{EQ}(\Gamma) = \{ (u,v)\in S^* \times S^* : \underline{u}=\underline{v}\}$ is recursively enumerable. For words $u,v$ consider the map $f_{u,v} \colon ((\RR/\ZZ)^{S^*})^n \to (\RR/\ZZ)^n$ given by \[f_{u,v}(x_1,\dots,x_n) = (x_1(u)-x_1(v), \dots, x_n(u)-x_n(v)).\]
    It is clear that each $f_{u,v}$ is computable (as projections are computable in the product space), and that the collection $\{f_{u,v}\}_{(u,v) \in \texttt{EQ}(\Gamma)}$ is uniformly computable. As $\{0\}$ is an effectively closed subset of $(\RR/\ZZ)^n$, it follows that $Y_{(1)} = \bigcap_{(u,v) \in \texttt{EQ}(\Gamma)} f_{u,v}^{-1}(\{0\})$ is an effectively closed set. Remark that this is precisely the set of elements of $((\RR/\ZZ)^{S^*})^n$ which satisfy condition (1).

    Given $k \in \NN$, and $v \in S^*$, consider the map $h_{k,v} \colon ((\RR/\ZZ)^{S^*})^n \to \RR/\ZZ$ given by \[ h_{k,v}(x_1,\dots,x_n) =  \left(\sum_{i=1}^n x_i g(k)_i^*\right)(v) = \sum_{i =1 }^n \sum_{u \in \supp(g(k)_i)} x_i(vu) g(k)_i(u).  \]
    It is clear from the fact that $g$ is a total computable function that the collection $(h_{k,v})_{k \in \NN, v \in S^*}$ is uniformly computable. It follows that $Y_{(2)} = \bigcap_{k \in \NN}\bigcap_{v \in S^*} h_{k,v}^{-1}(\{0\})$ is an effectively closed set as well. Remark that this is precisely the set of elements of $((\RR/\ZZ)^{S^*})^n$ which satisfy condition (2).

   As $Y = Y_{(1)}\cap Y_{(2)}$, it follows that $Y$ is an effectively closed subset of $((\RR/\ZZ)^{S^*})^n$.

   Let us define for each $s \in S$ the computable map $t_s \colon Y \to Y$ given by $(t_s(x_1,\dots,x_n))(u) = (x_1,\dots,x_n)(s^{-1}u)$. It is clear by condition (1) that these maps induce an action of $\Gamma$, thus $\Gamma \curvearrowright Y$ is a computable action on an effectively closed set. Finally, it follows from condition (2) that the map $\phi \colon X \to Y$ given by $(\phi(x_1,\dots,x_n))(w) = (x_1,\dots,x_n)(\underline{w})$ is a topological conjugacy and thus $\Gamma \curvearrowright X$ is an EDS.\end{proof}

   Putting together~\Cref{theorem:algebraic_actions_are_EDS} and \Cref{thm:simulation-by-SFTs-enhanced-version-GOTY}, we obtain the following corollary which in turn, implies~\Cref{thm:finitely_presented_algebraic_action} (as finitely presented algebraic actions are recursively presented).

   \begin{corollary}[Theorem C]
       Let $\Gamma$ be a recursively presented self-simulable group. Then every recursively presented algebraic action of $\Gamma$ is the topological factor of some $\Gamma$-subshift of finite type.
   \end{corollary}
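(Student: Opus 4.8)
The plan is to derive the statement by chaining together the two principal results already established, namely \Cref{theorem:algebraic_actions_are_EDS} and \Cref{thm:simulation-by-SFTs-enhanced-version-GOTY}. First I would record the setup: since $\Gamma$ is self-simulable, it is in particular finitely generated (self-simulability is only defined for finitely generated groups, with $\psi = \mathrm{id}_\Gamma$ an isomorphism, hence an epimorphism). Combined with the standing hypothesis that $\Gamma$ is recursively presented, this places us squarely in the hypotheses of \Cref{theorem:algebraic_actions_are_EDS}.

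Given a recursively presented algebraic action $\Gamma \curvearrowright X$, the first step is to apply \Cref{theorem:algebraic_actions_are_EDS} to conclude that $\Gamma \curvearrowright X$ is an EDS. This is where the substantive construction lives: the theorem exhibits a computable representative on an effectively closed subset $Y \subset ((\RR/\ZZ)^{S^*})^n$, using recursive enumerability of the word problem of $\Gamma$ to make the coherence condition effectively closed, and the computability of the presentation map $g$ to make the module relations effectively closed. The second step is to invoke \Cref{thm:simulation-by-SFTs-enhanced-version-GOTY} with $H = \Gamma$ and $\psi = \mathrm{id}_\Gamma$. Because $\Gamma$ is recursively presented (in the role of $H$) and is self-simulable (that is, $\Gamma$ simulates $\Gamma$ through the identity), the theorem yields that for every EDS $\Gamma \curvearrowright X$ the induced action---which here is just $\Gamma \curvearrowright X$ itself, as $\psi$ is the identity---is a topological factor of a $\Gamma$-SFT. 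Applying this to the EDS produced in the first step completes the argument.

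Since both heavy results are already in hand, I do not expect a genuine obstacle; the only point demanding care is verifying that the hypotheses of the two cited statements align exactly. Concretely, one must confirm that ``recursively presented'' as a property of the group $\Gamma$ is precisely what \Cref{thm:simulation-by-SFTs-enhanced-version-GOTY} requires of $H$, and that self-simulability supplies exactly the ``$\Gamma$ simulates $H$'' hypothesis in the special case $H = \Gamma$. Once these identifications are checked, the composition of the two theorems delivers the corollary with no further work.
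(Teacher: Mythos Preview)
Your proposal is correct and matches the paper's approach exactly: the corollary is obtained by combining \Cref{theorem:algebraic_actions_are_EDS} (to see that the algebraic action is an EDS) with \Cref{thm:simulation-by-SFTs-enhanced-version-GOTY} applied with $H=\Gamma$ and $\psi=\mathrm{id}_\Gamma$. The hypothesis checks you flag are precisely the ones that need to be made, and they all go through.
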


 \section{Other computability notions for shift spaces}\label{sec:computability-on-shift-spaces}

    The goal of this section is to compare the computability notions for subshifts we have introduced in this article with other notions of computability for shift spaces that can be found in the literature. We start by showing that a subshift is an EDS if and only if it is effective as per  \Cref{def:effective_subshift_through_free_group}. 
    
\begin{proposition}\label{prop:EDS_and_effective_subshift_are_equivalent}
        Let $\Gamma$ be a finitely generated group. A subshift $X\subset A^{\Gamma}$ is an EDS if and only if it is effective. 
    \end{proposition}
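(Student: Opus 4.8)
The plan is to prove the two implications separately, with all the work in the converse. The easy direction, that an effective subshift is an EDS, is exactly \Cref{example:effective_subshift_is_EDS}, so I would simply cite it. For the converse, suppose $X\subset A^\Gamma$ is a subshift which is an EDS. Being a subshift, $X$ is zero-dimensional, and the action $\Gamma\curvearrowright X$ admits the clopen time-zero cover $\mathcal P=\{[a]\cap X : a\in A\}$, where $[a]=\{x : x(1_\Gamma)=a\}$. This cover is generating: the atoms of $\bigvee_{g\in F}g^{-1}\mathcal P$ are determined by the values $x(g^{-1})$ for $g\in F$, so their diameters tend to zero as $F$ exhausts $\Gamma$. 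Hence $\Gamma\curvearrowright X$ is a zero-dimensional EDS admitting a generating open cover, and the zero-dimensional part of \Cref{prop:expansive-actions-are-factors-of-effective-subshifts} applies (note this part does not require $\Gamma$ to be recursively presented), yielding a topological conjugacy $\phi\colon W\to X$ where $W\subset B^\Gamma$ is an \emph{effective} $\Gamma$-subshift.

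It then remains to show that effectiveness transfers back across $\phi$, and this is the crux of the argument: the conjugacy produced is not assumed to be computable, so I cannot move computability through it directly. The resolution is to use the combinatorial rigidity of conjugacies of subshifts. By the Curtis--Hedlund--Lyndon theorem (valid over arbitrary groups), $\phi$ is a sliding block code: there is a finite $F\Subset\Gamma$ and a local rule $\mu\colon B^F\to A$ with $\phi(w)(g)=\mu\big((g^{-1}w)|_F\big)$. Fixing a finite generating set $S$, the free group $F(S)$, and the canonical projection $\pi\colon F(S)\to\Gamma$, I would choose a finite lift $\widehat F\Subset F(S)$ mapping bijectively onto $F$ via $\pi$ and define $\widehat\mu\colon B^{\widehat F}\to A$ by $\widehat\mu(p)=\mu\big(p\circ(\pi|_{\widehat F})^{-1}\big)$, together with the associated sliding block code $\widehat\phi\colon B^{F(S)}\to A^{F(S)}$, $\widehat\phi(y)(u)=\widehat\mu\big((u^{-1}y)|_{\widehat F}\big)$. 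Since $F(S)$ has decidable word problem, $\widehat\phi$ is a computable map on the full shift $B^{F(S)}$ with its canonical computable structure.

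A direct computation shows $\widehat\phi(w\circ\pi)=\phi(w)\circ\pi$ for every $w\in W$, and since $\phi$ is surjective this gives $\widehat\phi(\widehat W)=\widehat X$, where $\widehat W,\widehat X$ denote the pullbacks of $W,X$ as in \Cref{def:effective_subshift_through_free_group}. Now $W$ being effective means $\widehat W$ is effectively closed inside the recursively compact full shift $B^{F(S)}$, hence recursively compact by \Cref{prop.equivalence.comp.closed.subset}. The image of a recursively compact set under the computable map $\widehat\phi$ is recursively compact, so $\widehat X=\widehat\phi(\widehat W)$ is recursively compact, and therefore effectively closed in $A^{F(S)}$ by another application of \Cref{prop.equivalence.comp.closed.subset}. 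By definition this means $X$ is effective, completing the proof. The main obstacle is precisely the transfer of effectiveness across a possibly non-computable conjugacy; the key insight that unlocks it is that any such conjugacy between subshifts is a sliding block code, which becomes genuinely computable once lifted to the free group $F(S)$, where the word problem is decidable.
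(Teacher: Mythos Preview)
Your proof is correct, but it takes a genuinely different route from the paper's.

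The paper argues directly: it passes to a computable representative $\Gamma\curvearrowright Y$ with $Y\subset\{\symb 0,\symb 1\}^{\NN}$ effectively closed (via \Cref{cor:cantor-representative}), takes the conjugacy $\phi\colon Y\to\widehat X$, and hardcodes the finitely many clopen sets $C_a=\phi^{-1}([a])$ (each a finite union of cylinders). For a pattern $p\colon W\to A$ on $F(S)$, the set $C_p=\bigcap_{w\in W}w\cdot C_{p(w)}$ is effectively closed uniformly in $p$ because the action on $Y$ is computable, so emptiness of $C_p$ --- equivalently $[p]\cap\widehat X=\varnothing$ --- is semi-decidable. This enumerates the maximal forbidden set and shows $\widehat X$ is effectively closed.

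You instead first invoke the zero-dimensional clause of \Cref{prop:expansive-actions-are-factors-of-effective-subshifts} to obtain an effective $\Gamma$-subshift $W$ conjugate to $X$, then appeal to Curtis--Hedlund--Lyndon to realize the conjugacy as a sliding block code, lift it to a computable map on $B^{F(S)}$, and push forward recursive compactness from $\widehat W$ to $\widehat X$. Both proofs share the same core insight: the a priori non-computable conjugacy is pinned down by a finite amount of data (the clopen sets $C_a$ in the paper, the local rule $\mu$ in yours), and once that finite data is hardcoded everything becomes effective. Your route is conceptually clean and makes the role of CHL explicit, at the cost of relying on the comparatively heavy \Cref{prop:expansive-actions-are-factors-of-effective-subshifts}; the paper's route is more self-contained and also has the virtue of constructing the enumeration of forbidden patterns of $\widehat X$ directly.
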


    \begin{proof}
         Let us recall that for a finitely generated group $\Gamma$ a subshift $X\subset A^{\Gamma}$ is effective if its pullback into the full shift $A^{F(S)}$ is an effectively closed set. We already argued in~\Cref{example:effective_subshift_is_EDS} that every effective subshift is an EDS. Let us fix a finite set of generators $S$ of $\Gamma$ and denote by $\widehat{X}\subset A^{F(S)}$ the pullback subshift of $X$ on the free group $F(S)$.
        Suppose that $\Gamma \curvearrowright X$ is topologically conjugate to a computable action on an effectively closed subset $Y$ of a recursively compact metric space. By~\Cref{cor:cantor-representative}, we may assume without loss of generality that $Y$ is an effectively closed subset of $\{0,1\}^{\NN}$ with the canonical computable structure. As $\Gamma \curvearrowright X$ is topologically conjugate to $\Gamma \curvearrowright \widehat{X}$, there is a topological conjugacy $\phi \colon Y \to \widehat{X}$. For $a \in A$, let $[a] = \{ \widehat{x} \in \widehat{X} : \widehat{x}(1_{F(S)}) = a\}$, for a finite $W\subset F(S)$ and $p \in A^{W}$ we let \[ [p] = \bigcap_{w \in W} w[p(w)] = \{  \widehat{x} \in \widehat{X} : \widehat{x}(w) = p(w) \mbox{ for every } w \in W\}.     \] As $\phi$ is a homeomorphism, we have that $C_a = \phi^{-1}([a])$ is a finite union of cylinder sets in $Y$ and thus an effectively closed subset of $Y$.
    
    We claim that there exists an algorithm that given a finite $W\subset F(S)$ and a pattern $p \colon W \to A$, accepts if and only if $[p]\cap \widehat{X}=\varnothing$. Indeed, given such a $p$, we may compute a description of $C_p = \bigcap_{ w \in W} w \cdot C_{p(w)}$. As the action $\Gamma \curvearrowright Y$ is computable, it follows that each $w\cdot C_{p(w)}$ is an effectively closed set and thus, as the intersection of finitely many effectively closed sets is effectively closed, one may semi-decide whether $C_p = \varnothing$, which occurs precisely when $[p] \cap \widehat{X} =\varnothing$. It follows that the collection $\mathcal{F}_{\mathrm{max}} = \{ p \mbox{ is a pattern } : [p] \cap \widehat{X} = \varnothing\}$ is recursively enumerable. As $\mathcal{F}_{\mathrm{max}}$ is the maximal set of patterns which defines $\widehat{X}$, it follows that $\widehat{X}$ is an effectively closed set.
    \end{proof}
     
    

In the literature, there is also an intrinsic notion of computability for subshifts based on codings of forbidden patterns. This notion was originally defined in~\cite{ABS2017}.

Let $S$ be a finite set of generators of a group $\Gamma$ and $A$ be an alphabet. A partial function $c\colon S^* \to A$ with finite support is called a \define{pattern coding} and its domain is denoted $\operatorname{supp}(c)$. Recall that for $w \in S^*$ we denote by $\underline{w}$ the corresponding element of $\Gamma$. The cylinder set induced by a pattern coding $c$ is given by \[ [c] = \{ x \in A^{\Gamma} : x(\underline{w}) = c(w) \mbox{ for every } w \in \operatorname{supp}(c) \}.      \]

A pattern coding is \define{inconsistent} if there are $u,v \in \operatorname{supp}(c)$ such that $\underline{u}=\underline{v}$ but $c(u)\neq c(v)$. Notice that for inconsistent pattern codings we have $[c] = \varnothing$. The set of all pattern codings is a named set, and thus we can speak about computability of sets of pattern codings. Given a set $\mathcal{C}$ of pattern codings, we can define a subshift $X_{\mathcal{C}}\subset A^{\Gamma}$ by  \[ X_{\mathcal{C}} = A^{\Gamma} \setminus \bigcup_{g \in \Gamma, c \in \mathcal{C}} g[c].  \]

\begin{definition}\label{def:ECP}
    We say that a subshift $X \subset A^{\Gamma}$ is \define{effectively closed by patterns} (ECP) if there exists a recursively enumerable set of pattern codings $\mathcal{C}$ such that $X = X_{\mathcal{C}}$.
\end{definition}

\begin{example}
    For any finitely generated group $\Gamma$, every subshift of finite type $X\subset A^{\Gamma}$ is ECP, as it is given by a finite set of forbidden pattern codings.
\end{example}

In what follows we will show that for general finitely generated groups, being an ECP subshift is strictly weaker than being an EDS (or equivalently, an effective subshift), but that all notions coincide for recursively presented groups. For a subshift $X\subset A^{\Gamma}$ denote by $\mathcal{C}_{\mbox{max}}(X)$ the set of all pattern codings $c$ such that $[c]\cap X = \varnothing$.

\begin{proposition}\label{prop_effective_implies_max_forbidden_is_re}
    Let $\Gamma$ be a finitely generated group. A subshift $X\subset A^{\Gamma}$ is effective if and only if the set $\mathcal{C}_{\mbox{max}}(X)$ is recursively enumerable.
\end{proposition}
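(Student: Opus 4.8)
The plan is to exploit the homeomorphism $\widehat\phi\colon A^\Gamma \to \widehat{A^\Gamma}$ between the full shift on $\Gamma$ and its pullback on the free group, and to set up a \emph{computable dictionary} between pattern codings $c$ on $S^*$ and finite patterns on $F(S)$ (indexed by reduced words). Fix a finite symmetric generating set $S$; both the notion of effectiveness and the set $\mathcal{C}_{\mbox{max}}(X)$ are independent of this choice. Let $q\colon S^* \to F(S)$ be the computable reduction map sending a word to the reduced word representing it, so that $\phi(q(w)) = \underline w$ for every $w \in S^*$, and recall from \Cref{rem.effective.subshift.is.EDS} that $\widehat\phi$ is a homeomorphism of $A^\Gamma$ onto $\widehat{A^\Gamma}$.

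The core observation I would establish first is the following. Call a pattern coding $c$ \emph{reduction-consistent} if $q(u)=q(v)$ implies $c(u)=c(v)$ for all $u,v \in \operatorname{supp}(c)$. For such a $c$, the assignment $p_c(q(w)) = c(w)$ defines a finite pattern on $F(S)$ with support $q(\operatorname{supp}(c))$, and one has
\[ \widehat\phi([c]) = [p_c]\cap\widehat{A^\Gamma}, \qquad\text{hence}\qquad \widehat\phi([c]\cap X) = [p_c]\cap\widehat X. \]
Both identities follow from the computation $\widehat\phi(x)(q(w)) = x(\phi(q(w))) = x(\underline w)$ together with injectivity of $\widehat\phi$ (which gives $\widehat\phi(A\cap B)=\widehat\phi(A)\cap\widehat\phi(B)$ and $\widehat X = \widehat\phi(X)\subseteq\widehat{A^\Gamma}$). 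Since $\widehat\phi$ is a homeomorphism this yields the crucial equivalence $[c]\cap X = \varnothing \iff [p_c]\cap\widehat X = \varnothing$. Conversely, every finite pattern $p$ on $F(S)$ arises as $p = p_{c_p}$, where $c_p$ is the reduction-consistent pattern coding obtained by reading the reduced words of $\operatorname{supp}(p)$ as words in $S^*$.

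For the forward direction, assume $\widehat X$ is effectively closed, so that (as $F(S)$ has decidable word problem, recall the discussion preceding \Cref{def:effective_subshift_through_free_group}) the set of finite patterns $p$ on $F(S)$ with $[p]\cap\widehat X=\varnothing$ is recursively enumerable. To semi-decide $c \in \mathcal{C}_{\mbox{max}}(X)$ I first test reduction-consistency of $c$, which is \emph{decidable} since $q$ is computable: if $c$ fails it then $q(u)=q(v)$ forces $\underline u=\underline v$ while $c(u)\neq c(v)$, so $[c]=\varnothing$ and I accept; otherwise I form $p_c$ and run the enumerator for $[p_c]\cap\widehat X = \varnothing$, accepting exactly when it halts. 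By the equivalence this halts iff $[c]\cap X=\varnothing$, so $\mathcal{C}_{\mbox{max}}(X)$ is r.e. For the converse, assume $\mathcal{C}_{\mbox{max}}(X)$ is r.e.; given a finite pattern $p$ on $F(S)$ I compute $c_p$ and semi-decide $c_p \in \mathcal{C}_{\mbox{max}}(X)$, which by the equivalence halts iff $[p]\cap\widehat X=\varnothing$. Thus the forbidden patterns of $\widehat X$ are r.e., so $\widehat X$ is effectively closed and $X$ is effective.

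The point that deserves care, and which I expect to be the only genuine subtlety, is that one must \emph{not} attempt to decide when two words represent the same element of $\Gamma$, as the word problem of $\Gamma$ is not assumed decidable. The resolution is that all such identifications are already encoded inside $\widehat{A^\Gamma}$, and hence inside $\widehat X \subseteq \widehat{A^\Gamma}$: a pattern coding may satisfy $[c]=\varnothing$ either through a reduction-level collision (caught decidably via $q$) or through a genuine word-problem collision in $\Gamma$, but the latter is silently absorbed upon intersecting with $\widehat X$. Consequently the single semi-decision procedure for $[p_c]\cap\widehat X = \varnothing$ handles both cases uniformly, which is exactly what makes the equivalence work without any decidability hypothesis on $\Gamma$.
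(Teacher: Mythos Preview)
Your proof is correct and follows essentially the same approach as the paper: both arguments reduce the problem to the reduction-consistent (what the paper calls ``consistent on the free group'') pattern codings, use the decidability of reduction-consistency to discard the rest, and then identify the remaining codings with finite patterns on $F(S)$ whose cylinder misses $\widehat X$. Your write-up is more explicit about the dictionary $c \leftrightarrow p_c$ and the role of the word problem, but the underlying argument is the same.
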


\begin{proof}
    Let $\mathcal C_{\mbox{max}}'$ be the set of all pattern codings $c$ such that $[c]\cap X = \varnothing$ and $c$ is consistent on the free group $F(S)$. As the word problem in $F(S)$ is decidable, it follows that  $\mathcal C_{\mbox{max}}(X)$ is recursively enumerable if and only if $\mathcal C_{\mbox{max}}'$ is recursively enumerable. 
    
    Let us recall that $X$ is an effective subshift when $\widehat{X}\subset A^{F(S)}$ is effectively closed. We claim that $\mathcal C_{\mbox{max}}'$ is recursively enumerable if and only if $\widehat{X}\subset A^{F(S)}$ is effectively closed. This is just by definition of effectively closed set, as $\mathcal C_{\mbox{max}}'$ is exactly the set of pattern codings of patterns in the free group whose associated cylinder in $A^{F(S)}$ do not intersect $\widehat X$. \end{proof}

    In particular, as the set $\mathcal{C}_{\mbox{max}}(X)$ defines the subshift $X$, we obtain that every effective subshift is ECP.

    \begin{corollary}
        Let $\Gamma$ be a finitely generated group and $X\subset A^\Gamma$ be an effective subshift. Then $X$ is effectively closed by patterns.
    \end{corollary}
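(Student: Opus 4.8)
The plan is to take the recursively enumerable family $\mathcal{C}_{\mbox{max}}(X)$ supplied by \Cref{prop_effective_implies_max_forbidden_is_re} and verify that it defines $X$ as a set of forbidden pattern codings, i.e. that $X = X_{\mathcal{C}_{\mbox{max}}(X)}$. Since $X$ is effective, that proposition already guarantees that $\mathcal{C}_{\mbox{max}}(X)$ is recursively enumerable, so establishing this set equality immediately exhibits $X$ as effectively closed by patterns. Thus the entire content reduces to checking that the maximal family of patterns with empty intersection with $X$ is a valid defining family.

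First I would prove the inclusion $X \subseteq X_{\mathcal{C}_{\mbox{max}}(X)}$, which uses only shift-invariance. Suppose towards a contradiction that some $x \in X$ lies in $g[c]$ for some $g \in \Gamma$ and $c \in \mathcal{C}_{\mbox{max}}(X)$. Then $g^{-1}x \in [c]$, and since $X$ is $\Gamma$-invariant we also have $g^{-1}x \in X$, so $[c]\cap X \neq \varnothing$, contradicting $c \in \mathcal{C}_{\mbox{max}}(X)$. Hence $x$ avoids every such $g[c]$ and lies in $X_{\mathcal{C}_{\mbox{max}}(X)}$.

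For the reverse inclusion $X_{\mathcal{C}_{\mbox{max}}(X)} \subseteq X$, I would use that $X$ is closed. If $x \notin X$, then because the complement of $X$ is open in the prodiscrete topology there is a pattern $p\colon F \to A$ with $F \Subset \Gamma$ such that $x \in [p]$ and $[p]\cap X = \varnothing$. Using that $S$ generates $\Gamma$, I pick for each $f \in F$ a word $w_f \in S^*$ with $\underline{w_f} = f$ and define the pattern coding $c$ by $c(w_f) = p(f)$; then $[c] = [p]$, so $c \in \mathcal{C}_{\mbox{max}}(X)$ and $x \in [c] = 1_\Gamma[c]$, whence $x \notin X_{\mathcal{C}_{\mbox{max}}(X)}$. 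Combining the two inclusions yields $X = X_{\mathcal{C}_{\mbox{max}}(X)}$.

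I do not expect a genuine obstacle here: the statement is a direct consequence of \Cref{prop_effective_implies_max_forbidden_is_re}, and the only thing to verify is the routine fact that the maximal forbidden family defines the subshift, which rests on closedness of $X$ (to produce a forbidden pattern around each exterior point) and invariance of $X$ (for the first inclusion). The single bookkeeping point worth stating explicitly is the passage from patterns with support in $\Gamma$ to pattern codings with support in $S^*$, which is possible precisely because $S$ is a generating set of $\Gamma$.
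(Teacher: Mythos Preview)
Your proof is correct and follows exactly the paper's approach: the paper's proof is the single sentence ``as the set $\mathcal{C}_{\mbox{max}}(X)$ defines the subshift $X$, we obtain that every effective subshift is ECP,'' and you have simply spelled out the routine verification of that defining property which the paper leaves implicit.
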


    We shall see that the converse only holds in recursively presented groups. For this we shall need the following lemma. 
    
\begin{lemma}{\cite[Lemma 2.3]{ABS2017}}
    Let $\Gamma$ be a finitely generated and recursively presented group. For every ECP subshift $X\subset A^{\Gamma}$, the set $\mathcal{C}_{\mbox{max}}(X)$ is recursively enumerable.
\end{lemma}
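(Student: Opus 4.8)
The plan is to avoid the direct finite-cover/compactness argument and instead feed the statement into the machinery for effective subshifts already set up in the excerpt. By \Cref{prop_effective_implies_max_forbidden_is_re}, for a subshift $X\subset A^\Gamma$ the set $\mathcal{C}_{\mbox{max}}(X)$ is recursively enumerable if and only if $X$ is effective, i.e.\ if and only if its pullback $\widehat X\subset A^{F(S)}$ is an effectively closed subset of the full shift over the free group $F(S)$ generated by a fixed finite generating set $S$. Since $F(S)$ has decidable word problem, $A^{F(S)}$ carries its canonical computable structure and these notions are available. Thus I would reduce the lemma to the single claim that, for a recursively presented $\Gamma$, the pullback $\widehat X=\{\widehat\phi(x):x\in X\}$ of an ECP subshift $X=X_{\mathcal C}$ is effectively closed.

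The key step is a clean set-theoretic identity. Fix a recursively enumerable $\mathcal C$ with $X=X_{\mathcal C}$. For $v\in S^*$ and a pattern coding $c'\colon S^*\to A$, let $[v\cdot c']\subset A^{F(S)}$ denote the cylinder of all $y$ with $y(vw)=c'(w)$ for every $w\in\operatorname{supp}(c')$, where $vw$ is read as an element of $F(S)$; because $F(S)$ has decidable word problem, the finite support and defining pattern of $[v\cdot c']$ are computable from $(v,c')$. I would then establish
\[
  \widehat X \;=\; \widehat{A^\Gamma}\,\cap\,\Big(A^{F(S)}\setminus \bigcup_{v\in S^*,\ c'\in\mathcal C}[v\cdot c']\Big).
\]
To check this, recall that $\widehat{A^\Gamma}$ is exactly the set of $\phi$-consistent configurations, i.e.\ the image of $\widehat\phi$, and that for $y=\widehat\phi(x)$ one has $y(vw)=x(\underline{vw})=x(\underline v\,\underline w)$. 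Consequently, for such $y$ the condition $y\in[v\cdot c']$ is literally the condition $x\in\underline v[c']$, so requiring $y\notin[v\cdot c']$ for all $v\in S^*$, $c'\in\mathcal C$ is equivalent to requiring $x\notin g[c']$ for all $g\in\Gamma$, $c'\in\mathcal C$, which is exactly $x\in X_{\mathcal C}=X$.

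With the identity in hand the computability bookkeeping is routine and I would dispatch it quickly. The factor $\widehat{A^\Gamma}$ is effectively closed precisely because $\Gamma$ is recursively presented (\Cref{example:fullshift_RP_is_effective}). The union $\bigcup_{v,c'}[v\cdot c']$ is effectively open, since one enumerates the pairs $(v,c')$ with $v\in S^*$ and $c'$ drawn from a fixed enumeration of the r.e.\ set $\mathcal C$, and uniformly computes the defining pattern of each cylinder $[v\cdot c']$ in $A^{F(S)}$. Hence $\widehat X$ is an intersection of two effectively closed sets and is therefore effectively closed, so $X$ is effective and \Cref{prop_effective_implies_max_forbidden_is_re} yields that $\mathcal{C}_{\mbox{max}}(X)$ is recursively enumerable.

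The main obstacle, and the only place the hypothesis is spent, is concentrated in the effective closedness of $\widehat{A^\Gamma}$: the raw forbidden data $\mathcal C$ can be enumerated over $F(S)$ with no knowledge of the word problem of $\Gamma$, so for non-recursively-presented groups the failure is entirely the inability to recognize the consistency constraints $x(\underline u)=x(\underline v)$ forced by $\underline u=\underline v$ in $\Gamma$. The remaining subtlety to handle carefully in a full write-up is codings $c'$ that are consistent over $F(S)$ but inconsistent over $\Gamma$ (or vice versa); the point is that the single displayed relation $y(vw)=x(\underline v\,\underline w)$ makes the equivalence $y\in[v\cdot c']\iff x\in\underline v[c']$ hold for every $\phi$-consistent $y$ in all cases—both sides simply become unsatisfiable when $c'$ forces two cells that are equal in $\Gamma$ to carry different symbols—so no separate case analysis beyond the identity is required.
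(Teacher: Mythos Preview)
The paper does not prove this lemma; it is cited from~\cite{ABS2017} without argument. Your proof is correct, and in fact it is well adapted to the paper's own framework: rather than the direct compactness argument one would expect from the original source (semi-deciding $[c]\cap X=\varnothing$ by searching over larger and larger finite windows and using the recursively enumerable word problem to merge cells), you reduce everything to the displayed identity for $\widehat X$ and invoke \Cref{prop_effective_implies_max_forbidden_is_re} and \Cref{example:fullshift_RP_is_effective}. The identity is verified correctly, including the edge case of $\Gamma$-inconsistent codings $c'$, which is exactly where the intersection with $\widehat{A^\Gamma}$ does its work. The only cosmetic point is that your cylinders $[v\cdot c']$ are defined via words $vw\in S^*$ read in $F(S)$, so distinct $w$ in $\operatorname{supp}(c')$ might collapse in $F(S)$; since $F(S)$ has decidable word problem this is harmless and you note it, but in a polished write-up it would be cleanest to first reduce each $c'$ to a pattern on $F(S)$ before forming the cylinder.
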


    \begin{corollary}\label{cor:ECP_plus_RP_implies_EDS}
        On recursively presented groups, every ECP subshift is effective. Thus on recursively presented groups the three computability notions are equivalent. 
    \end{corollary}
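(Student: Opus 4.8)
The plan is to assemble the implications already established in this section into a cycle. Let $\Gamma$ be a finitely generated recursively presented group and suppose $X \subset A^{\Gamma}$ is an ECP subshift. First I would invoke the cited lemma \cite[Lemma 2.3]{ABS2017}, which guarantees that on a recursively presented group the maximal set of forbidden pattern codings $\mathcal{C}_{\mbox{max}}(X)$ is recursively enumerable. Next I would apply \Cref{prop_effective_implies_max_forbidden_is_re}, which asserts that a subshift is effective precisely when $\mathcal{C}_{\mbox{max}}(X)$ is recursively enumerable. Combining these two facts immediately yields that $X$ is effective, establishing the first assertion of the corollary.

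For the second assertion, that all three notions coincide, I would close the cycle of implications. By the corollary preceding the lemma, every effective subshift is ECP; combined with the implication just proved, this shows that on recursively presented groups ECP and effectiveness are equivalent. Finally, \Cref{prop:EDS_and_effective_subshift_are_equivalent} identifies effectiveness with being an EDS for every finitely generated group. Stringing these equivalences together gives that, on recursively presented groups, $X$ is an EDS $\iff$ $X$ is effective $\iff$ $X$ is ECP.

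Since every ingredient has already been proven, I expect essentially no obstacle here: the work is purely in ordering the implications correctly. The single point that deserves a moment of care is that \Cref{prop_effective_implies_max_forbidden_is_re} holds for \emph{arbitrary} finitely generated groups, so the recursive presentation hypothesis enters only through the cited lemma, whose role is precisely to secure the recursive enumerability of $\mathcal{C}_{\mbox{max}}(X)$. Dropping that hypothesis is exactly what breaks the implication ECP $\Rightarrow$ effective, in agreement with the earlier remark that ECP is strictly weaker than effectiveness for general groups.
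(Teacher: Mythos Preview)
Your proposal is correct and matches the paper's approach exactly: the corollary is stated without proof in the paper because it follows immediately by combining the cited lemma with \Cref{prop_effective_implies_max_forbidden_is_re}, and the equivalence of all three notions then follows from \Cref{prop:EDS_and_effective_subshift_are_equivalent} and the previous corollary, precisely as you describe.
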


    We remark that for an effective cover $\mathcal{P}$ of a recursively compact metric space $X$, the subshift $Y(\Gamma \curvearrowright X, \mathcal{P})$ from~\Cref{prop:effective-covers-give-effective-subshifts} is ECP but not necessarily effective. This is fundamentally the reason why we need to ask that $\Gamma$ is recursively presented in~\Cref{thm:zero_dim_effective_extension}.

    A useful consequence of \Cref{cor:ECP_plus_RP_implies_EDS} is that on recursively presented groups, SFTs are  effective subshifts. This follows from the fact that SFTs are ECP. 
\begin{corollary}\label{cor:on recursively presented groups SFTs are effective}
    Let $\Gamma$ be a recursively presented group. Then every $\Gamma$-SFT is effective.
\end{corollary}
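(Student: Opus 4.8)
The plan is to deduce this immediately from the two facts already in place: that subshifts of finite type are effectively closed by patterns, and that on recursively presented groups the ECP and effective notions coincide (\Cref{cor:ECP_plus_RP_implies_EDS}). The statement is really a corollary in the strict sense, so the work is just to assemble these ingredients.

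First I would unwind the definition of an SFT. Let $X = X_{\mathcal F} \subset A^{\Gamma}$ be a $\Gamma$-SFT, where $\mathcal F$ is a finite set of forbidden patterns, each a map $p \colon F \to A$ with $F \Subset \Gamma$. Fixing a finite generating set $S$, for each such $p$ and each $g \in F$ I would choose a word $w_g \in S^*$ with $\underline{w_g} = g$, and form the pattern coding $c_p \colon S^* \to A$ supported on $\{w_g : g \in F\}$ with $c_p(w_g) = p(g)$. The resulting finite set $\mathcal C = \{c_p : p \in \mathcal F\}$ satisfies $X = X_{\mathcal C}$, so $X$ is ECP because any finite set of pattern codings is trivially recursively enumerable. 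This is exactly the observation recorded in the example just above the statement. With $X$ thus exhibited as an ECP subshift, the conclusion follows by a single application of \Cref{cor:ECP_plus_RP_implies_EDS}, which guarantees that every ECP subshift on a recursively presented group is effective.

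There is no real obstacle here: the substantive content lies upstream, in \Cref{cor:ECP_plus_RP_implies_EDS} and the Aubrun--Barbieri--Sablik lemma it rests on, which is where recursive presentability is genuinely used to pass from an r.e.\ family of forbidden codings to the r.e.\ maximal family $\mathcal{C}_{\mbox{max}}(X)$. The only point in the present argument requiring a moment of care is the translation from forbidden patterns indexed by group elements to pattern codings indexed by words over $S$, namely checking that replacing each $g \in F$ by an arbitrary representative word $w_g$ does not alter the forbidden cylinder; this is immediate, since the cylinder $[c_p]$ in $A^{\Gamma}$ depends only on the values $c_p$ takes at the group elements $\underline{w_g} = g$, and not on the chosen words.
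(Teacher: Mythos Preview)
Your proof is correct and follows exactly the same route as the paper: observe that every SFT is ECP (since a finite set of forbidden pattern codings is trivially recursively enumerable), then apply \Cref{cor:ECP_plus_RP_implies_EDS}. The paper's proof is in fact just the single sentence preceding the corollary, so your version is if anything more detailed than necessary.
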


Let us show that for non recursively presented groups there are always ECP subshifts which are not effective. Let $A$ be a finite alphabet with $|A|\geq 2$ and $\Gamma$ a finitely generated group. Notice that the full $\Gamma$-shift $A^{\Gamma}$ is ECP as it is defined through the empty set of pattern codings.

\begin{proposition}
    The full $\Gamma$-shift is effective if and only if $\Gamma$ is recursively presented.
\end{proposition}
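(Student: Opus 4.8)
The plan is as follows. One implication is already recorded: \Cref{example:fullshift_RP_is_effective} shows that if $\Gamma$ is recursively presented then $A^{\Gamma}$ is effective. Thus I only need to establish the converse, namely that effectiveness of the full shift forces $\texttt{WP}_S(\Gamma)$ to be recursively enumerable.

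Assume $A^{\Gamma}$ is effective. By \Cref{prop_effective_implies_max_forbidden_is_re} this is equivalent to the set $\mathcal{C}_{\mathrm{max}}(A^{\Gamma})$ of pattern codings $c$ with $[c]\cap A^{\Gamma}=\varnothing$ being recursively enumerable. The first step is to identify this set explicitly. For the full shift, the cylinder $[c]$ is empty exactly when no configuration realizes $c$, which happens precisely when $c$ is \emph{inconsistent}: there exist $u,v\in\operatorname{supp}(c)$ with $\underline{u}=\underline{v}$ yet $c(u)\neq c(v)$. Indeed, if $c$ is consistent one may define a configuration on $\{\underline{w}:w\in\operatorname{supp}(c)\}$ and extend it arbitrarily, so $[c]\neq\varnothing$. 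Hence $\mathcal{C}_{\mathrm{max}}(A^{\Gamma})$ coincides with the set of inconsistent pattern codings.

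The key step is to read off the word problem from the enumerability of the inconsistent pattern codings. Fix two distinct symbols $a,b\in A$, which exist because $|A|\geq 2$, and for each nonempty word $w\in S^{*}$ let $c_{w}$ be the pattern coding with $\operatorname{supp}(c_{w})=\{\epsilon,w\}$ given by $c_{w}(\epsilon)=a$ and $c_{w}(w)=b$. Then $c_{w}$ is inconsistent if and only if $\underline{w}=\underline{\epsilon}=1_{\Gamma}$, that is, if and only if $w\in\texttt{WP}_S(\Gamma)$. Since the assignment $w\mapsto c_{w}$ is computable and $\mathcal{C}_{\mathrm{max}}(A^{\Gamma})$ is recursively enumerable, the set $\{w\in S^{*}\setminus\{\epsilon\}:c_{w}\in\mathcal{C}_{\mathrm{max}}(A^{\Gamma})\}$ is recursively enumerable; adjoining the single element $\epsilon$, which lies trivially in the word problem, shows that $\texttt{WP}_S(\Gamma)$ is recursively enumerable. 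Therefore $\Gamma$ is recursively presented, completing the converse.

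The argument is essentially immediate once $\mathcal{C}_{\mathrm{max}}(A^{\Gamma})$ is identified with the inconsistent pattern codings; the content of the proof is the observation that inconsistency of a two-point coding supported on $\{\epsilon,w\}$ encodes exactly the assertion $\underline{w}=1_{\Gamma}$. No genuine obstacle arises, and the only mild bookkeeping is the separate treatment of the empty word.
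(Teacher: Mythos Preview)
Your proof is correct and follows essentially the same approach as the paper. Both invoke \Cref{prop_effective_implies_max_forbidden_is_re}, then test membership of $w$ in $\texttt{WP}_S(\Gamma)$ via the two-point pattern (coding) supported on $\{\epsilon,w\}$ (respectively $\{1_{F(S)},\underline{w}\}$) with distinct values $a,b$; the only cosmetic difference is that you phrase things in terms of pattern codings on $S^*$ and explicitly identify $\mathcal{C}_{\mathrm{max}}(A^{\Gamma})$ with the inconsistent codings, whereas the paper works directly with patterns on $F(S)$.
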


\begin{proof}
    If $\Gamma$ is recursively presented then the full $\Gamma$-shift is effective by~\Cref{cor:on recursively presented groups SFTs are effective}. Conversely, if the full $\Gamma$-shift is effective then $\widehat{A^{\Gamma}}$ is an effectively closed set and thus by~\Cref{prop_effective_implies_max_forbidden_is_re} the maximal set of forbidden pattern codings is recursively enumerable.
    
    Let $S$ be a finite generating set of $\Gamma$ and consider the pullback $\widehat{A^{\Gamma}}\subset A^{F(S)}$. Fix two distinct symbols $a,b \in A$. Given $w \in S^*$, there is an algorithm that constructs its corresponding element $\underline{w}\in F(S)$ and the pattern $p_w \in A^{\{1_{F(S)},\underline{w}\}}$ with $p_w(1_{F(S)}) = a$ and $p_w(\underline{w})=b$. Clearly $[p_w]\cap \widehat{A^{\Gamma}}=\varnothing$ if and only if $w$ represents the identity in $\Gamma$ and furthermore $\widehat{A^{\Gamma}} = A^{F(S)}\setminus \bigcup_{g \in \Gamma, w \in \operatorname{\texttt{WP}(\Gamma)}} g[p_w]$.
    
     It follows that given $w \in S^*$ we can recursively enumerate the $w$ for which $[p_w]\cap \widehat{A^{\Gamma}}=\varnothing$ which are precisely the elements of $\texttt{WP}(\Gamma)$. Therefore $\Gamma$ is recursively presented.
\end{proof}
    
\section{Topological factors of effective dynamical systems}\label{sec:factors}

We begin by showing that the class of EDS is not closed by topological factors. In particular, it shows that subshifts of finite type on certain groups may have factors that are not EDS. However, we shall show that under certain conditions such as having topological zero dimension, we can still provide computational restrictions that said factors must satisfy.

Given an action $\Gamma \curvearrowright X$, we define its set of periods as \[ \mathtt{Per}(\Gamma \curvearrowright X) = \{ g \in \Gamma : \mbox{ there is } x \in X \mbox{ such that } gx = x\}\]  

The following lemma holds in general for any finitely generated group with decidable word problem. However, we shall only need it for the special case of $\ZZ$-actions.

\begin{lemma}
    Let $\ZZ \curvearrowright X$ be an EDS. Then $\mathtt{Per}(\ZZ\curvearrowright X)$ is a co-recursively enumerable set.
\end{lemma}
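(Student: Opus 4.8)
The plan is to pass to a computable representative and then reduce the problem to semi-deciding emptiness of the fixed-point sets of the powers of the generator. Being a period is a topological conjugacy invariant: if $\phi\colon \widehat{X}\to X$ is a conjugacy between a computable action $\ZZ\curvearrowright \widehat{X}$ and $\ZZ\curvearrowright X$, then $nx=x$ for some $x\in X$ if and only if $ny=y$ for $y=\phi^{-1}(x)$, since $\phi$ is an equivariant homeomorphism. Hence $\mathtt{Per}(\ZZ\curvearrowright X)=\mathtt{Per}(\ZZ\curvearrowright \widehat{X})$, and we may assume without loss of generality that $X$ is itself a recursively compact subset of a computable metric space $\mathcal{X}$ and that the action is computable.

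First I would observe that, since the action is computable, the maps $f_n\colon X\to X$ given by $f_n(x)=nx$ are uniformly computable in $n\in\ZZ$: one composes the generator map $f_1$ or its inverse $f_{-1}$ the appropriate number of times, and $f_0$ is the identity. By \Cref{prop:computability_of_fixed_points}, each fixed-point set $\mathrm{Fix}_n=\{x\in X : f_n(x)=x\}$ is effectively closed in $\mathcal{X}$; inspecting the proof (the function $x\mapsto d(x,f_n(x))$ is computable, and this holds uniformly once the $f_n$ are uniformly computable) shows that the $\mathrm{Fix}_n$ are effectively closed uniformly in $n$. By definition, $n\in\mathtt{Per}(\ZZ\curvearrowright X)$ precisely when $\mathrm{Fix}_n\neq\varnothing$.

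Next I would semi-decide membership in the complement. Writing $\mathcal{X}\setminus\mathrm{Fix}_n=U_{I_n}$ for a uniformly recursively enumerable index set $I_n$, and using that $\mathrm{Fix}_n\subseteq X$, we have $\mathrm{Fix}_n=\varnothing$ if and only if $X\subseteq U_{I_n}$. Because $X$ is recursively compact and $U_{I_n}$ is effectively open uniformly in $n$, the inclusion $X\subseteq U_{I_n}$ is semi-decidable uniformly in $n$ by the remark following \Cref{def:recursive compactness}. Running this procedure on input $n$ therefore halts exactly when $n\notin\mathtt{Per}(\ZZ\curvearrowright X)$, which exhibits $\mathtt{Per}(\ZZ\curvearrowright X)$ as a co-recursively enumerable subset of $\ZZ$.

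I do not expect a serious obstacle here; the argument is a direct assembly of \Cref{prop:computability_of_fixed_points}, \Cref{rem_emptiness}, and the semi-decidability of inclusion into effectively open sets afforded by recursive compactness. The only two points requiring care are the verification that periodicity transfers across the (possibly non-computable) conjugacy to the computable representative, and the uniformity in $n$ of the fixed-point construction, both of which are routine.
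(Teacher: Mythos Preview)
Your proof is correct and follows essentially the same approach as the paper: pass to a computable representative, exhibit the fixed-point sets of the iterates as uniformly effectively closed, and then use recursive compactness to semi-decide their emptiness. The only cosmetic difference is that the paper unfolds the fixed-point argument by hand via the diagonal $\Delta^2\subset X^2$ and the maps $(x,y)\mapsto(T^n x,y)$, whereas you invoke \Cref{prop:computability_of_fixed_points} directly; these are the same computation.
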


\begin{proof}
    We will prove that the complement of $\mathtt{Per}(\ZZ \curvearrowright X)$ is a recursively enumerable subset of $\ZZ$. Without loss of generality, replace $\ZZ \curvearrowright X$ by a computable representative. Then the product space $X^2$ is recursively compact and thus the diagonal $\Delta^2 = \{ (x,x) \in X^2 : x \in X\}$ is an effectively closed subset and thus also recursively compact. As $\ZZ\curvearrowright X$ is computable, it follows that the collection of diagonal maps $f_n\colon X^2 \to X^2$ given by $(x,y) \mapsto (T^n(x),y)$ for $n \in \ZZ$ is uniformly computable, and thus the collection of sets $Y_n = f_n^{-1}(\Delta^2)$ is uniformly recursively compact. It follows that we can recursively enumerate the integers $n$ such that $Y_n \cap \Delta^2 = \varnothing$. In other words, the set \[ \ZZ \setminus \mathtt{Per}(\ZZ \curvearrowright X) = \{ n \in \ZZ : Y_n \cap \Delta^2 = \varnothing\}, \]
    is recursively enumerable.
\end{proof}

The set of periods is an invariant of topological conjugacy, therefore any action $\ZZ\curvearrowright X$ for which the set of periods is not co-recursively enumerable cannot be topologically conjugate to an EDS.

\begin{proposition}
    The class of EDS is not closed under topological factor maps.
\end{proposition}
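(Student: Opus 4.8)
The plan is to combine the preceding lemma with the remark that the set of periods is a topological conjugacy invariant: it suffices to produce a single effective dynamical system $\ZZ \curvearrowright X$ admitting a topological factor $\ZZ \curvearrowright Y$ whose set of periods $\mathtt{Per}(\ZZ \curvearrowright Y)$ fails to be co-recursively enumerable, for then $Y$ cannot be an EDS. The mechanism that makes this possible is that factor maps can \emph{create} periodic points: if $y=\pi(x)$ and $T^n y = y$, the fibre $\pi^{-1}(y)$ is merely a $T^n$-invariant compact set, on which $T^n$ need not have a fixed point, so an aperiodic orbit upstairs can be collapsed onto a genuinely periodic orbit downstairs.

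Fix an enumeration $(q_k)_{k\geq 1}$ of the primes and a recursively enumerable set $W\subseteq \NN$ that is not co-recursively enumerable (for instance $W=\texttt{Halt}$). The target set of periods will be $P \isdef \{0\}\cup\bigcup_{k\in W} q_k\ZZ$. This set is not co-recursively enumerable: its complement consists of the nonzero integers divisible by no $q_k$ with $k\in W$, and since $q_k \mid q_j$ holds only for $k=j$, a prime $q_j$ lies in the complement exactly when $j\notin W$; an enumeration of the complement would therefore enumerate $\NN\setminus W$, contradicting the choice of $W$.

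To realize $P$ as the periods of a factor, I would build $X$ as a computable action on a recursively compact zero-dimensional set of the form $X = G_\infty \sqcup \bigsqcup_{k\geq 1} G_k$, where each $G_k$ is a uniformly computable aperiodic minimal system admitting a topological factor map $\rho_k\colon G_k \to \ZZ/q_k\ZZ$ onto the cyclic orbit of least period $q_k$, and $G_\infty$ is a fixed aperiodic minimal system onto which the pieces $G_k$ accumulate. Such uniformly computable families exist (e.g. suitable Toeplitz or odometer type systems), so that $X$ is an effective dynamical system; being assembled from aperiodic pieces, it satisfies $\mathtt{Per}(\ZZ\curvearrowright X)=\{0\}$. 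I then define $\pi\colon X\to Y$ to be the identity on $G_\infty$ and on every $G_k$ with $k\notin W$, and to equal $\rho_k$ on $G_k$ when $k\in W$, setting $Y=\pi(X)$. The only periodic points of $Y$ then lie in the collapsed finite orbits $\ZZ/q_k\ZZ$ with $k\in W$, whence $\mathtt{Per}(\ZZ\curvearrowright Y)=P$; by the preceding lemma $Y$ is not an EDS, while it is a topological factor of the EDS $X$.

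The main obstacle is verifying that $\pi$ is a genuine topological factor map, that is, that $Y$ is compact and $\pi$ continuous. Since infinitely many pieces must accumulate, the images $\rho_k(G_k)=\ZZ/q_k\ZZ$ of the collapsed pieces with $k\in W$ have to converge, as $k\to\infty$, to $G_\infty$ in a manner compatible with the dynamics; this forces a careful choice of the systems $G_k$, of the maps $\rho_k$, and of the metric realizing the accumulation $G_k\to G_\infty$ (choosing $G_\infty$ to be an odometer whose finite cyclic factors refine those carried by the $G_k$ makes the collapse uniformly continuous in the limit). One must also check that this limiting procedure introduces no spurious periodic points, in particular no fixed point, so that $\mathtt{Per}(\ZZ\curvearrowright Y)$ is exactly $P$; this is guaranteed by the aperiodicity and minimality of $G_\infty$, which forces every accumulation point of the collapsed finite orbits to be aperiodic.
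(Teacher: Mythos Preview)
Your overall strategy---exploit the lemma that the period set of an EDS is co-recursively enumerable, and build a factor whose period set fails this test---is exactly the paper's. The divergence is in the construction, and yours carries a genuine gap.

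You want the source $X$ to be \emph{aperiodic} and to \emph{create} periodic orbits downstairs by collapsing aperiodic pieces $G_k$ onto finite cycles $\ZZ/q_k\ZZ$. You correctly flag continuity of $\pi$ at $G_\infty$ as the obstacle, but your proposed fix (``choose $G_\infty$ to be an odometer whose finite cyclic factors refine those carried by the $G_k$'') does not settle it. Continuity at $G_\infty$ forces the \emph{fibers} of $\rho_k$ to have diameter tending to zero as $k\to\infty$; otherwise two sequences in $G_k$ that are $\rho_k$-equivalent but bounded apart will converge to distinct points of $G_\infty$ identified by $\pi$, and the quotient fails to be Hausdorff. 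For instance, if $G_k=G_\infty=\prod_j \ZZ/q_j\ZZ$ and $\rho_k$ is projection to the $k$-th coordinate, the fibers all have diameter one and the quotient is not a metrizable space. Forcing the fibers to shrink (say by projecting onto the first $k$ coordinates) changes the created periods from the primes $q_k$ to the products $q_1\cdots q_k$, and then the argument that the resulting period set is not co-r.e.\ no longer goes through as written. None of this is irreparable, but it is real work that your outline does not do.

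The paper sidesteps all of this by reversing the direction: instead of creating periods, it \emph{removes} them. Take $X=\{0\}\cup\bigcup_{n\geq 1}\{|z|=1/n\}\subset\CC$ with $T(z)=z\exp(2\pi i|z|)$, a rational rotation of period $n$ on each circle of radius $1/n$; this is visibly a computable map on a recursively compact set, and $\mathtt{Per}(\ZZ\curvearrowright X)=\ZZ$. For any infinite set $A$ of primes, the map that sends circles with $1/|z|\notin A$ to the fixed point $0$ and is the identity elsewhere is continuous (since $|f(z)|\leq|z|$), equivariant, and surjective onto $Y=\{0\}\cup\bigcup_{n\in A}\{|z|=1/n\}$. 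Now $\mathtt{Per}(\ZZ\curvearrowright Y)=\ZZ A$, and choosing $A$ to be a set of primes that is not co-r.e.\ finishes the proof. The conceptual economy is that a fixed point at the accumulation locus absorbs all the collapsed material without any continuity tension; you never have to match an aperiodic limit with approaching finite orbits.
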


\begin{proof}
    Let us consider \[X = \{0\} \cup \left\{ z \in \CC : |z| = \frac{1}{n} \mbox{ for some integer }n \geq 1 \right\}. \]
And let $T \colon X \to X$ be given by $T(z) = z\exp\left(2\pi i |z|\right)$.
It is clear that $X$ is a recursively compact subset of $\CC$ with the standard topology and that $T$ is a computable homeomorphism which thus induces a computable action $\ZZ \curvearrowright X$. The map $T$ consists of rational rotations on each circle where the period is given by the inverse of the radius, and thus it is easy to see that $\mathtt{Per}(\ZZ\curvearrowright X) = \ZZ$.

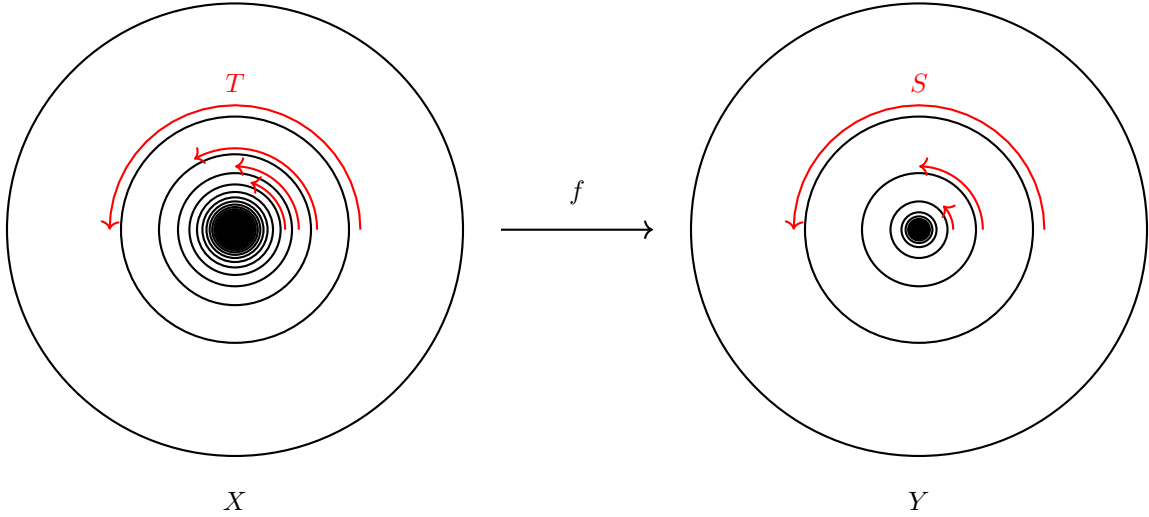
\begin{figure}[!ht]
    \centering
    \begin{tikzpicture}
    \begin{scope}[shift = {(0,0)}, scale = 3]
        \foreach \ii in {1,...,20}{
            \draw[thick] (0,0) circle (1/\ii);
        }
        \draw[thick, fill = black] (0,0) circle (1/21);
        \node[red] at (0,0.65) {$T$};
        \draw [red,thick,domain=0:180, ->] plot ({0.55*cos(\x)},{0.55*sin(\x)});
        \draw [red,thick,domain=0:120, ->] plot ({0.36*cos(\x)},{0.36*sin(\x)});
        \draw [red,thick,domain=0:90, ->] plot ({0.28*cos(\x)},{0.28*sin(\x)});
        \draw [red,thick,domain=0:72, ->] plot ({0.22*cos(\x)},{0.22*sin(\x)});
        \node at (0,-1.2) {$X$};
    \end{scope}
    \begin{scope}[shift = {(9,0)}, scale = 3]
        \foreach \ii in {1,2,4,8,13,17}{
            \draw[thick] (0,0) circle (1/\ii);
        }
        \draw[thick, fill = black] (0,0) circle (1/21);
        \node[red] at (0,0.65) {$S$};
        \draw [red,thick,domain=0:180, ->] plot ({0.55*cos(\x)},{0.55*sin(\x)});
        \draw [red,thick,domain=0:90, ->] plot ({0.28*cos(\x)},{0.28*sin(\x)});
        \draw [red,thick,domain=0:45, ->] plot ({0.15*cos(\x)},{0.15*sin(\x)});
        \node at (0,-1.2) {$Y$};
    \end{scope}
    \draw[thick, ->] (3.5,0) to (5.5,0);
    \node at (4.5,0.5) {$f$};
    \end{tikzpicture}
    \caption{A representation of the actions $\ZZ\curvearrowright X$ and $\ZZ \curvearrowright Y$.}
    \label{fig:circulito}
\end{figure}

Now let $A \subset \NN\setminus \{0\}$ be some infinite set and let $Y\subset \CC$ be given by \[Y = \{0\} \cup \left\{ z \in \CC : |z| = \frac{1}{n} \mbox{ for some } n \in A \right\}.\] Similarly, the map $S\colon Y\to Y$ given by $S(z) = z\exp\left(2\pi i |z|\right)$ is a homeomorphism which induces an action $\ZZ \curvearrowright Y$.

Let $f \colon X \to Y$ be the map given by \[ f(z)   = \begin{cases} z & \mbox{ if } |z| = \frac{1}{n} \mbox{ for some } n \in A,  \\0 & \mbox{ otherwise. }  \end{cases}\]

It is clear from the definition that $f$ is a continuous, surjective map such that $f\circ T = S \circ T$ and thus a topological factor map from $\ZZ \curvearrowright X$ to $\ZZ \curvearrowright Y$. However, notice that $\mathtt{Per}(\ZZ\curvearrowright Y) = \ZZ A$, that is, the set of integers which are integer multiples of some element of $A$. 

Notice that as the set of prime numbers is decidable, then a subset $A$ of prime numbers is co-recursively enumerable if and only if $\ZZ A$ is co-recursively enumerable. In particular, if we take $A$ which is not co-recursively enumerable we would have that $\mathtt{Per}(\ZZ\curvearrowright Y)$ is not co-recursively enumerable either and thus $\ZZ \curvearrowright Y$ cannot be an EDS.
\end{proof}

\begin{proposition}\label{prop:factor-of-eds-which-is-not-eds}
	The class of EDS is not closed under topological factor maps, even if we restrict to zero-dimensional spaces.
\end{proposition}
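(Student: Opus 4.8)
The plan is to transport the strategy of the preceding proposition into the zero-dimensional world, replacing the circles of radius $1/n$ (each carrying a rotation of period $n$) by genuine finite periodic orbits realised inside a Cantor space, and to detect non-effectiveness through the set of periods. The guiding principle is that a topological factor map $f\colon X\to Y$ can only \emph{enlarge} the set of periods: if $T^{n}x=x$ then $S^{n}f(x)=f(x)$, so $\mathtt{Per}(\ZZ\curvearrowright X)\subseteq\mathtt{Per}(\ZZ\curvearrowright Y)$, and collapsing a periodic orbit of size $n$ onto one of size $d\mid n$ creates the new periods $d\ZZ$. The key subtlety that I would address from the outset is that one must avoid producing a \emph{global fixed point} in $Y$: a fixed point forces $\mathtt{Per}(\ZZ\curvearrowright Y)=\ZZ$, which is trivially co-recursively enumerable and would render the invariant useless. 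For this reason the periodic orbits will be made to accumulate not onto a fixed point but onto an odometer, which is a minimal action with no periodic points at all.

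Concretely, let $(q_e)_{e\in\NN}$ be the computable enumeration of the primes and let $\Omega$ be a computable zero-dimensional odometer chosen large enough to receive every finite cyclic orbit as a quotient (for instance the universal adding machine $\varprojlim_{n}\ZZ/n!\,\ZZ$). I would build $X=\Omega\sqcup\bigsqcup_{e\in\NN}O_{e}$, where $O_{e}$ is a cyclic orbit of size $q_e^{2}$, encoded as an effectively closed subset of $\{\symb{0},\symb{1}\}^{\NN}$ so that the orbits $O_{e}$ converge to $\Omega$ as $e\to\infty$; here one addresses each point by a prefix recording its distance to $\Omega$ and a suffix recording its position within its cyclic orbit. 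With $\Omega$ acting as the odometer and $\ZZ$ rotating each $O_e$, this is a computable action on a (computably) closed subset of the Cantor space, hence a zero-dimensional EDS. Since the odometer contributes no periods, one computes $\mathtt{Per}(\ZZ\curvearrowright X)=\bigcup_{e}q_e^{2}\ZZ=\{m:m\text{ is not squarefree}\}$, which is decidable, consistent with $X$ being an EDS; in particular no prime $q_e$ lies in $\mathtt{Per}(\ZZ\curvearrowright X)$.

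Now fix $B=\texttt{Halt}$, which is recursively enumerable but not co-recursively enumerable, and define $f\colon X\to Y$ to be the identity on $\Omega$ and on each $O_{e}$ with $e\notin B$, and on each $O_{e}$ with $e\in B$ the equivariant quotient $\ZZ/q_e^{2}\ZZ\tto\ZZ/q_e\ZZ$ onto a cyclic orbit of size $q_e$. Acting only on the suffix coordinate and never on the prefix, $f$ preserves proximity to $\Omega$ and is therefore continuous; it is visibly surjective and $\ZZ$-equivariant, so $Y=f(X)$ is a zero-dimensional topological factor of $X$. A short divisibility computation gives $\mathtt{Per}(\ZZ\curvearrowright Y)=\bigcup_{e\notin B}q_e^{2}\ZZ\ \cup\ \bigcup_{e\in B}q_e\ZZ$, and since no prime other than $q_e$ itself divides $q_e$, one checks that $q_e\in\mathtt{Per}(\ZZ\curvearrowright Y)$ if and only if $e\in B$. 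Were $\mathtt{Per}(\ZZ\curvearrowright Y)$ co-recursively enumerable, the computable map $e\mapsto q_e$ would exhibit $B^{\complement}$ as recursively enumerable, contradicting the choice of $B$. Hence $\mathtt{Per}(\ZZ\curvearrowright Y)$ is not co-recursively enumerable, and by the lemma bounding the periods of an EDS, $Y$ is not an EDS.

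The genuinely delicate step is the encoding in the second paragraph: producing an explicit effectively closed presentation of $X$ in $\{\symb{0},\symb{1}\}^{\NN}$ (via \Cref{thm:compacto-y-cero-dimensional-tiene-copia-en-el-cantor} and \Cref{thm:Tychonoff_computable}) for which the $\ZZ$-action is computable, the orbits $O_{e}$ accumulate precisely onto $\Omega$, and the collapsing maps act only on coordinates irrelevant to proximity to $\Omega$ so that continuity of $f$ is automatic. Once this bookkeeping is fixed, the period computations and the reduction from $B$ are routine. I would also remark that $Y$ cannot be conjugate to a subshift, in agreement with the fact recovered later in this section that among subshifts the effective ones are closed under factors; the whole point is that $Y$ is a non-expansive zero-dimensional factor.
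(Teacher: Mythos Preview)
Your strategy coincides with the paper's: assemble finite periodic orbits together with an accumulation locus, collapse some of them, and detect non-effectiveness through $\mathtt{Per}$. Your warning about global fixed points is exactly right---and in fact the paper's own construction does not heed it: the all-zero sequence in the product $\prod_n X_n$ is a fixed point of every $s_n$, so $\mathtt{Per}(Y_A)=\ZZ$ rather than $\ZZ A$ as the paper asserts. Replacing the fixed accumulation point by an aperiodic minimal system $\Omega$ and the collapse-to-a-point by a partial collapse $q_e^{2}\to q_e$ is the natural repair, and once the space is actually built your period identity $q_e\in\mathtt{Per}(Y)\Leftrightarrow e\in B$ is valid.

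The genuine gap is the encoding step you flag as delicate; your sketch does not close it. With a marker prefix recording $e$ and a suffix recording the position $k\in\ZZ/q_e^{2}\ZZ$, the suffix action is ``add $1$ modulo $q_e^{2}$'', and at the wraparound $q_e^{2}-1\mapsto 0$ continuity with the limiting odometer would require $q_e^{2}\to 0$ in the profinite group underlying $\Omega$; since the $q_e^{2}$ are pairwise coprime this fails in every nontrivial odometer, so the action you describe is not continuous on the closure. (The paper's product construction sidesteps this continuity issue entirely---each coordinate acts independently---at the cost of the fixed point you rightly worry about.) To rescue your route you need a different realisation of $X$: for instance, work inside $\{0,1\}^{\ZZ}$ with $\Omega$ a Sturmian subshift of computable irrational slope and $O_e$ the periodic orbit coding a nearby rational rotation of denominator $q_e^{2}$, so that the shift action is continuous for free and only the effectiveness of $X=\Omega\cup\bigcup_e O_e$ and the continuity of the collapsing map remain to be verified. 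Either way, the encoding must be carried out, not merely promised.
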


\begin{proof}
    For each $n\in\N$, let $X_n=\{0,\dots,n\}$, and let $t_n = (1\ 2\ \dots\ n)$ be the permutation on $X_n$ which fixes $0$ and cyclically permutes $1\mapsto 2\mapsto 3 \mapsto\dots \mapsto n\mapsto 1$. Now define $Y_n$ as the product $X_0\times\dots\times X_n$, and $s_n$ as the map on $Y_n$ which applies $t_0,\dots, t_n$ component-wise. We define $\pi_{n+1}$ as the map $Y_{n+1}\to Y_n$ which removes the last component of the tuple, this is a factor map from $Y_{n+1}$ to $Y_n$ with their respective actions. 
    We now define $Y$ as the inverse limit of sequence
    \[ \dots\xrightarrow {\pi_{n+2}}\ Y_{n+1}\xrightarrow {\pi_{n+1}} Y_n \xrightarrow{\pi_{n} \ \ }  \dots \xrightarrow{\pi_1 \ \ } Y_0. \]
     
    It is straightforward to verify that the set $Y$ endowed with the component-wise action is a zero dimensional EDS. Let $A \subset \NN$. We now define a topological factor of $Y$ called $Y_A$. 
    
    For each $n$ define $g_n \colon X_n\to X_n$, as follows. For $n\not\in A$, $g_n$ sends everything to $0$. For $n\in A$, $g_n$ is the identity function on $X_n$.  
    Now define $f_n\colon Y_n\to Y_n$ as the function which on component $n$ applies $g_n$. Then $f_n$ is an endomorphism of $Y_n$ with the action induced by $s_n$. Finally, define $F$ as the endomorphism of $Y$ which  in component $n$ applies $f_n$ and let $Y_A$ be the image of $F$, which is a compact subset of $\prod_{n \in \NN} Y_n$ by the continuity of $F$. We endow $Y_A$ with the componentwise action inherited as a subsystem of $Y$.  
    
    As in the previous construction, it suffices to take $A$ as a set of primes which is not co-recursively enumerable. In this case, $Y_A$ is a zero-dimensional topological factor of the zero dimensional EDS $Y$, but it cannot be an EDS because its set of periods is exactly $\ZZ A$ which is not co-recursively enumerable.
\end{proof}

Even if the class of EDS is not closed under topological factor maps, it is reasonable to assume that a factor of an EDS should still satisfy some sort of computability constraint. With the aim of understanding this class in the case of zero-dimensional spaces, we introduce a notion of weak EDS which will turn out to be stable under topological factor maps.

\begin{definition}\label{def:WEDS}
    An action $\Gamma \curvearrowright X$ with $X \subset A^{\NN}$ is a \define{weak effective dynamical system} (WEDS) if for every clopen partition $\mathcal{P}= (P_i)_{i=1}^n$ of $X$ the subshift \[ Y(\Gamma \curvearrowright X, \mathcal{P}) = \{ y \in \{1,\dots,n\}^{\Gamma} : \mbox{ there is } x \in X \mbox{ such that for every } g \in \Gamma,\ g^{-1}x \in P_{y(g)}\}.  \]
    is effective.
\end{definition}

Notice that for an action $\Gamma \curvearrowright X$ with $X\subset A^{\NN}$ to be a WEDS, it suffices to check the condition on the clopen partitions $\mathcal{P}_n = \{[w] : w \in A^n \mbox{ and } [w]\cap X \neq \varnothing\}$ as this family of clopen partitions refines any other clopen partition.  As $\Gamma \curvearrowright X$ can be obtained as the inverse limit of the shift action on the $Y(\Gamma \curvearrowright X, \mathcal{P}_n)$, we may also define a WEDS as an action that can be obtained as the inverse limit of a sequence of effective subshifts. Notice that we do not require the sequence to be uniform in this definition.

\begin{remark}
    Notice that any expansive WEDS $\Gamma \curvearrowright X$ is automatically an EDS, as in this case $\Gamma \curvearrowright X$ is topologically conjugate to $Y(\Gamma\curvearrowright X,\mathcal P)$ for any partition with small enough diameter and thus we have that $\Gamma \curvearrowright X$ is an EDS.
\end{remark}
\begin{proposition}\label{prop:0d-EDS_is_WEDS}
    If $\Gamma\curvearrowright X$ is a zero dimensional EDS, then it is a WEDS.
\end{proposition}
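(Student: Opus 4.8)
The plan is to replace $\Gamma \curvearrowright X$ by a well-chosen computable representative inside the Cantor space, and then to exploit the fact that a clopen \emph{partition} (as opposed to a mere cover) forces the associated subshift cover to factor automatically through the canonical epimorphism $\phi \colon F(S) \to \Gamma$. Since $\Gamma \curvearrowright X$ is a zero-dimensional EDS, it is topologically conjugate to a computable action on a recursively compact zero-dimensional subset of a computable metric space, and hence, by~\Cref{cor:cantor-representative}, to a computable action $\Gamma \curvearrowright Y$ with $Y$ an effectively closed subset of $\{\symb 0,\symb 1\}^{\NN}$. The property of being a WEDS only refers to clopen partitions of the space and to their associated subshift covers, both of which are preserved by any topological conjugacy: a conjugacy $h$ sends a clopen partition $\mathcal P$ of $X$ to a clopen partition $h(\mathcal P)$ of $Y$ and identifies $Y(\Gamma \curvearrowright X, \mathcal P)$ with $Y(\Gamma \curvearrowright Y, h(\mathcal P))$ as $\Gamma$-subshifts. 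Thus it suffices to show that $\Gamma \curvearrowright Y$ is a WEDS.

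First I would fix a clopen partition $\mathcal P = (P_i)_{i=1}^n$ of $Y$. Each $P_i$ is clopen in the compact set $Y$, hence a finite union of cylinders of $\{\symb 0,\symb 1\}^{\NN}$ intersected with $Y$. As a finite union of cylinders is effectively closed and $Y$ is effectively closed, each $P_i$ is effectively closed, so $\mathcal P$ is an effective cover. Letting $F(S) \curvearrowright Y$ be the action induced by $\phi$ and applying~\Cref{prop:effective-covers-give-effective-subshifts}, the subshift $Y(F(S) \curvearrowright Y, \mathcal P)$ is effective, that is, effectively closed in $A^{F(S)}$, where $A = \{1,\dots,n\}$.

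The final step is to identify this free-group subshift with the pullback of the $\Gamma$-subshift from the definition of WEDS. I claim that
\[ Y(F(S) \curvearrowright Y, \mathcal P) = \widehat{Y(\Gamma \curvearrowright Y, \mathcal P)}. \]
Indeed, because the cells $P_i$ are pairwise disjoint and clopen, so that $\overline{P_i} = P_i$, for any $y$ in the left-hand side and any witnessing $x \in Y$ the label $y(w)$ is the \emph{unique} index $i$ with $w^{-1}x = \phi(w)^{-1}x \in P_i$, and this depends only on $\phi(w)$. Hence every configuration of $Y(F(S) \curvearrowright Y, \mathcal P)$ factors through $\phi$, and unwinding the definitions yields the displayed equality. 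Consequently the pullback of $Y(\Gamma \curvearrowright Y, \mathcal P)$ is effectively closed, which by~\Cref{def:effective_subshift_through_free_group} means exactly that $Y(\Gamma \curvearrowright Y, \mathcal P)$ is an effective $\Gamma$-subshift. As $\mathcal P$ was arbitrary, $\Gamma \curvearrowright Y$ is a WEDS.

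The one genuinely load-bearing point is this last identification: it is precisely the disjointness of a partition that makes every admissible labelling factor through $\phi$ on its own, so that no separate intersection with $\widehat{A^\Gamma}$ is required. This is what lets the argument go through \emph{without} assuming $\Gamma$ recursively presented, in contrast with the proof of~\Cref{prop:main-result}, where the cover is not a partition and the recursive-presentation hypothesis is needed exactly to guarantee that $\widehat{A^\Gamma}$ is effectively closed.
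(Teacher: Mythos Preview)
Your proof is correct and follows essentially the same approach as the paper's. Both pass to the induced $F(S)$-action, invoke \Cref{prop:effective-covers-give-effective-subshifts} to get that $Y(F(S)\curvearrowright X,\mathcal P)$ is effective, and then use disjointness of the partition to conclude that this free-group subshift already coincides with the pullback $\widehat{Y(\Gamma\curvearrowright X,\mathcal P)}$; the paper phrases this last step as ``every $w\in F(S)$ with $\underline w=1_\Gamma$ acts trivially,'' while you phrase it as ``$y(w)$ depends only on $\phi(w)$,'' but these are equivalent. Your write-up is in fact a bit more careful than the paper's in one respect: you explicitly invoke \Cref{cor:cantor-representative} to place the action inside $\{\symb 0,\symb 1\}^{\NN}$ and argue that being a WEDS is a conjugacy invariant, whereas the paper applies \Cref{prop:effective-covers-give-effective-subshifts} directly without spelling out that a computable representative is being used.
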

\begin{proof}
    Let $\mathcal P$ be a cover of $X$ which consists of disjoint clopen sets and let $S$ be a finite set of generators for $\Gamma$. Let us consider the action of the free group $F(S)\curvearrowright X$ induced by the action $\Gamma \curvearrowright X$. We proved in \Cref{prop:effective-covers-give-effective-subshifts} that the subshift $Y(F(S)\curvearrowright X,\mathcal P)$ is effective and thus an EDS. We now prove that $Y(\Gamma \curvearrowright X,\mathcal P)$ is an EDS. For this it suffices to prove that every $w\in F(S)$ with $\underline{w}=1_\Gamma$ acts trivially on  $Y(\Gamma \curvearrowright X,\mathcal P)$, as then we have a topological conjugacy between $\Gamma\curvearrowright Y(\Gamma \curvearrowright X,\mathcal P)$ and $\Gamma\curvearrowright Y(F(S) \curvearrowright X,\mathcal P)$.

    Assume that $wy\neq y$ for some $y$ in $Y(F(S) \curvearrowright X,\mathcal P)$ in order to obtain a contradiction. We can assume that $wy$ and $y$ differ in $1_{F(S)}$ by shifting $y$. By definition, this means that there is some element $x\in X$ such that $x\in P_{y(1_{F(S)})}$ and $x\in P_{wy(1_{F(S)})}$. This is a contradiction, as the computable map associated to $w$ is the identity and thus the sets $P_{y(1_{F(S)})}$, $P_{wy(1_{F(S)})}$ are disjoint. 
\end{proof}

Next we show that the class of WEDS is closed under topological factors.

\begin{proposition}\label{prop:factor_of_WEDS_is_WEDS}
    Let $\Gamma\curvearrowright X$ be a WEDS and $\Gamma\curvearrowright Y$ be a zero-dimensional topological factor. Then $\Gamma\curvearrowright Y$ is a WEDS.
\end{proposition}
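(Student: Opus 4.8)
The plan is to reduce the statement to the defining WEDS property of $\Gamma \curvearrowright X$ by pulling clopen partitions back through the factor map. Since $Y$ is zero-dimensional (compact metrizable), I would first fix a topological embedding realizing it as a subset $Y \subset B^{\NN}$ of a Cantor space, as is needed merely to speak of a WEDS. Here it is worth recording that the WEDS condition is intrinsic: the clopen partitions of $Y$, the subshifts $Y(\Gamma \curvearrowright Y, \mathcal{Q}) \subset \{1,\dots,n\}^{\Gamma}$ they produce, and the property of being an effective subshift all depend only on the abstract action and not on the chosen embedding. Thus it suffices to verify that $Y(\Gamma \curvearrowright Y, \mathcal{Q})$ is effective for an arbitrary clopen partition $\mathcal{Q} = (Q_i)_{i=1}^n$ of $Y$.

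Write $\pi \colon X \to Y$ for the factor map. The first step is to produce from $\mathcal{Q}$ a clopen partition $\mathcal{P} = (P_i)_{i=1}^n$ of $X$ by setting $P_i = \pi^{-1}(Q_i)$. Each $P_i$ is clopen because $\pi$ is continuous and $Q_i$ is clopen; the $P_i$ are pairwise disjoint and cover $X$ because the $Q_i$ do so in $Y$. This is the point at which I must be careful to check that a genuine clopen partition results, so that the WEDS hypothesis on $X$ can legitimately be applied to $\mathcal{P}$.

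The key step is the identity
\[ Y(\Gamma \curvearrowright Y, \mathcal{Q}) = Y(\Gamma \curvearrowright X, \mathcal{P}), \]
which I would establish by unwinding both definitions. A configuration $y \in \{1,\dots,n\}^{\Gamma}$ lies in the left-hand side exactly when there is $z \in Y$ with $g^{-1} z \in Q_{y(g)}$ for all $g \in \Gamma$, and in the right-hand side exactly when there is $x \in X$ with $g^{-1} x \in P_{y(g)} = \pi^{-1}(Q_{y(g)})$ for all $g$. Using equivariance, $\pi(g^{-1}x) = g^{-1} \pi(x)$, so the latter condition reads $g^{-1}\pi(x) \in Q_{y(g)}$ for all $g$; because $\pi$ is surjective, the witnesses $z$ and $\pi(x)$ range over precisely the same set $Y$, which yields the equality of the two subshifts. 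I expect this set equality to be the only substantive part of the argument, and the main subtlety is that both equivariance and surjectivity of $\pi$ must be invoked.

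Finally, since $\Gamma \curvearrowright X$ is a WEDS and $\mathcal{P}$ is a clopen partition of $X$, the subshift $Y(\Gamma \curvearrowright X, \mathcal{P})$ is effective; by the identity just established, so is $Y(\Gamma \curvearrowright Y, \mathcal{Q})$. As $\mathcal{Q}$ was an arbitrary clopen partition of $Y$, I conclude that $\Gamma \curvearrowright Y$ is a WEDS. The whole argument is essentially formal, with no appeal to the effectiveness machinery beyond the WEDS definition itself.
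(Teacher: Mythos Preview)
Your proposal is correct and follows essentially the same approach as the paper: pull back an arbitrary clopen partition of $Y$ through the factor map to obtain a clopen partition of $X$, observe that the associated subshift covers coincide, and invoke the WEDS hypothesis on $X$. The paper's proof is just a terser version of yours, omitting the explicit verification of the subshift equality and the remark about embedding $Y$ in a Cantor space.
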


\begin{proof}
    Denote by $f \colon X \to Y$ the topological factor map and let $\mathcal{P} = (P_i)_{i=1}^n$ be a clopen partition of $Y$. Then $\mathcal{Q} = (f^{-1}(P_i))_{i=1}^n$ is a clopen partition of $X$. As $X$ is WEDS, we obtain that $Y(\Gamma\curvearrowright X, \mathcal{Q})$ is an effective subshift. As $Y(\Gamma\curvearrowright X, \mathcal{Q}) = Y(\Gamma\curvearrowright Y, \mathcal{P})$, we obtain that $Y$ is a WEDS.
\end{proof}

As an immediate corollary of~\Cref{prop:0d-EDS_is_WEDS} and~\Cref{prop:factor_of_WEDS_is_WEDS} we obtain:

\begin{corollary}
    Every zero-dimensional topological factor of a zero-dimensional EDS is a WEDS.
\end{corollary}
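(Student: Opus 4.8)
The plan is to obtain the statement purely by composing the two immediately preceding propositions, since each of them supplies exactly one half of what is required. So I would begin by fixing a zero-dimensional EDS $\Gamma \curvearrowright X$ together with a zero-dimensional topological factor $\Gamma \curvearrowright Y$, witnessed by a topological factor map $f \colon X \to Y$.

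The first step is to invoke \Cref{prop:0d-EDS_is_WEDS} to upgrade the hypothesis on $X$: since $\Gamma \curvearrowright X$ is a zero-dimensional EDS, it is in particular a WEDS. This is the step that carries the actual content of the argument, since it rests on the effectivity of the subshift covers $Y(F(S) \curvearrowright X, \mathcal{P})$ established in \Cref{prop:effective-covers-give-effective-subshifts}, combined with the observation (valid in the zero-dimensional, partition setting) that any $w \in F(S)$ acting trivially on $X$ also acts trivially on the associated cover subshift.

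With $\Gamma \curvearrowright X$ now known to be a WEDS, the second step is simply to apply \Cref{prop:factor_of_WEDS_is_WEDS}, whose hypotheses are met verbatim: $\Gamma \curvearrowright Y$ is a zero-dimensional topological factor of the WEDS $\Gamma \curvearrowright X$. That proposition then yields that $\Gamma \curvearrowright Y$ is a WEDS, which is exactly the desired conclusion. Concretely, for any clopen partition $\mathcal{P}$ of $Y$ its preimage $\mathcal{Q} = f^{-1}(\mathcal{P})$ is a clopen partition of $X$, and the identity $Y(\Gamma \curvearrowright X, \mathcal{Q}) = Y(\Gamma \curvearrowright Y, \mathcal{P})$ transports effectivity from one to the other.

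I do not expect a genuine obstacle here; all the difficulty has already been absorbed into the two cited propositions, so the corollary reduces to a one-line transitivity argument. The only point deserving a moment's care is purely definitional: \Cref{def:WEDS} is stated for actions on subsets of a Cantor space, so I would note at the outset that, being zero-dimensional, both $X$ and $Y$ may be realized (via the computable homeomorphisms furnished by \Cref{thm:compacto-y-cero-dimensional-tiene-copia-en-el-cantor}) as subsets of $\{\symb{0},\symb{1}\}^{\NN}$, and that the WEDS property is in any case a topological conjugacy invariant because each cover subshift depends only on the abstract dynamics and the chosen partition, not on the particular embedding.
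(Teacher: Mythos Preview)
Your proposal is correct and follows exactly the paper's own approach: the corollary is stated there as an immediate consequence of \Cref{prop:0d-EDS_is_WEDS} and \Cref{prop:factor_of_WEDS_is_WEDS}, and your two-step argument invokes precisely these propositions in the same order. The extra remarks you make about realizing $X$ and $Y$ inside $\{\symb{0},\symb{1}\}^{\NN}$ and about conjugacy invariance of the WEDS property are reasonable clarifications but not needed beyond what the paper already assumes.
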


Finally, putting this result together with~\Cref{thm:zero_dim_effective_extension} we obtain the following general result.

\begin{proposition}\label{prop:factor-of-EDS-is-WEDS}
    Let $\Gamma$ be a finitely generated and recursively presented group, $\Gamma \curvearrowright X$ be an EDS and $\Gamma\curvearrowright Y$ be a zero-dimensional topological factor. Then $\Gamma\curvearrowright Y$ is a WEDS.
\end{proposition}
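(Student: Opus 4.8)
The plan is to chain together the zero-dimensional extension theorem with the two stability results for WEDS that have already been established, using the recursive presentation of $\Gamma$ as the bridge between the (possibly positive-dimensional) EDS $X$ and the zero-dimensional machinery underlying WEDS. The key observation is that \Cref{thm:zero_dim_effective_extension} is precisely the tool that converts an abstract EDS into something living in Cantor space, and its hypothesis (that $\Gamma$ be finitely generated and recursively presented) is exactly the hypothesis we are given here.

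Concretely, I would proceed as follows. Since $\Gamma$ is finitely generated and recursively presented and $\Gamma \curvearrowright X$ is an EDS, \Cref{thm:zero_dim_effective_extension} yields an effectively closed subset $\widetilde{X}\subset \{\symb{0},\symb{1}\}^{\NN}$ and a computable action $\Gamma \curvearrowright \widetilde{X}$ such that $\Gamma \curvearrowright X$ is a topological factor of $\Gamma \curvearrowright \widetilde{X}$; call the factor map $p\colon \widetilde{X}\to X$. This $\Gamma \curvearrowright \widetilde{X}$ is a computable action on a recursively compact subset of a computable metric space, hence a zero-dimensional EDS. Denoting by $f\colon X\to Y$ the given factor map onto the zero-dimensional factor, the composition $f\circ p\colon \widetilde{X}\to Y$ is continuous, surjective, and $\Gamma$-equivariant (as a composition of two topological factor maps), so $\Gamma \curvearrowright Y$ is a zero-dimensional topological factor of the zero-dimensional EDS $\Gamma \curvearrowright \widetilde{X}$.

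To finish, I would invoke the two stability results in sequence. By \Cref{prop:0d-EDS_is_WEDS}, the zero-dimensional EDS $\Gamma \curvearrowright \widetilde{X}$ is a WEDS. Then, since $\Gamma \curvearrowright Y$ is a zero-dimensional topological factor of $\Gamma \curvearrowright \widetilde{X}$, \Cref{prop:factor_of_WEDS_is_WEDS} shows that $\Gamma \curvearrowright Y$ is itself a WEDS, which is the desired conclusion.

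I do not expect a genuine obstacle here, as all of the analytic and combinatorial content has been pushed into the earlier results; the only subtlety worth flagging is making sure the composition $f\circ p$ is legitimately a topological factor map (continuity and surjectivity are automatic from composing two such maps, and equivariance follows coordinatewise), and that the intermediate object $\widetilde{X}$ genuinely qualifies as a \emph{zero-dimensional} EDS so that \Cref{prop:0d-EDS_is_WEDS} applies — both of which are immediate from the fact that $\widetilde{X}$ sits inside $\{\symb{0},\symb{1}\}^{\NN}$ and carries a computable action on an effectively closed (hence recursively compact) set. The essential point is simply that the recursive presentation hypothesis is used exactly once, to legitimize the passage to $\widetilde{X}$ via \Cref{thm:zero_dim_effective_extension}.
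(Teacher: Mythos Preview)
Your proposal is correct and follows essentially the same approach as the paper: apply \Cref{thm:zero_dim_effective_extension} to obtain a zero-dimensional EDS extension $\widetilde{X}$, observe that $Y$ is then a zero-dimensional factor of $\widetilde{X}$, and conclude via \Cref{prop:0d-EDS_is_WEDS} and \Cref{prop:factor_of_WEDS_is_WEDS}. If anything, your version is more explicit than the paper's, which compresses the last two steps into a single citation.
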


\begin{proof}
    As $\Gamma$ is recursively presented, by~\Cref{thm:zero_dim_effective_extension} there is a zero-dimensional EDS extension of $\Gamma \curvearrowright X$. Then $\Gamma\curvearrowright Y$ is a topological factor of this zero-dimensional extension and thus by~\Cref{prop:0d-EDS_is_WEDS} we obtain that it is a WEDS.
\end{proof}

We have proved that for zero dimensional systems, the class of EDS is different to the class of WEDS. Indeed,  the example constructed in~\Cref{prop:factor-of-eds-which-is-not-eds} is not an EDS, but it follows from~\Cref{prop:factor-of-EDS-is-WEDS} that is a WEDS.

\begin{corollary}
    Let $\Gamma \curvearrowright X$ be an EDS and $\Gamma \curvearrowright Y$ be an expansive and zero-dimensional topological factor. If $X$ is zero-dimensional or $\Gamma$ is recursively presented then $\Gamma \curvearrowright Y$ is an EDS.
\end{corollary}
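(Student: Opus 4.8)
The plan is to reduce the statement to the observation, recorded in the remark following \Cref{def:WEDS}, that an expansive WEDS is automatically an EDS; the expansiveness hypothesis will be used \emph{only} at this last step. Accordingly, I would split the argument along the two alternative hypotheses and show, in each case, that $\Gamma \curvearrowright Y$ is a WEDS.

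First, suppose $X$ is zero-dimensional. Then \Cref{prop:0d-EDS_is_WEDS} applies directly and gives that $\Gamma \curvearrowright X$ is a WEDS. Since $\Gamma \curvearrowright Y$ is a zero-dimensional topological factor of $\Gamma \curvearrowright X$, \Cref{prop:factor_of_WEDS_is_WEDS} then yields that $\Gamma \curvearrowright Y$ is itself a WEDS. Suppose instead that $\Gamma$ is recursively presented, with no assumption on the dimension of $X$. In this case I would invoke \Cref{prop:factor-of-EDS-is-WEDS} directly: as $\Gamma$ is finitely generated and recursively presented, $\Gamma \curvearrowright X$ is an EDS, and $\Gamma \curvearrowright Y$ is a zero-dimensional topological factor, we conclude again that $\Gamma \curvearrowright Y$ is a WEDS.

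Thus in both cases $\Gamma \curvearrowright Y$ is a WEDS. Now the expansiveness hypothesis enters: by the remark following \Cref{def:WEDS}, for any clopen partition $\mathcal{P}$ of $Y$ of sufficiently small diameter the associated coding map is injective, so $\Gamma \curvearrowright Y$ is topologically conjugate to the subshift $Y(\Gamma \curvearrowright Y, \mathcal{P})$, which is effective precisely because $Y$ is a WEDS; being topologically conjugate to an effective subshift, $\Gamma \curvearrowright Y$ is an EDS. I do not expect any genuine obstacle here, since all the structural work has been carried out in the cited propositions; the only points requiring care are verifying that each of the two hypotheses on $X$ and $\Gamma$ is independently sufficient to reach the WEDS conclusion, and checking that it is exactly expansiveness that upgrades a WEDS to an EDS.
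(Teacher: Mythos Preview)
Your proposal is correct and follows exactly the argument the paper intends: the corollary is stated without proof precisely because it is immediate from the remark after \Cref{def:WEDS} together with \Cref{prop:0d-EDS_is_WEDS}, \Cref{prop:factor_of_WEDS_is_WEDS} (or their combined corollary) in the zero-dimensional case, and \Cref{prop:factor-of-EDS-is-WEDS} in the recursively presented case. Your two-case split and the final use of expansiveness to upgrade WEDS to EDS are the intended route.
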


\begin{corollary}\label{cor:EDSsubshifts_closed_under_top_factors}
    The class of effective subshifts is closed under topological factor maps.
\end{corollary}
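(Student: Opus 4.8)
The plan is to deduce this directly from the theory of WEDS developed above, exploiting that every subshift is both expansive and zero-dimensional and that the effectiveness of a subshift is captured by one particular subshift cover. First I would set up the situation: let $\Gamma\curvearrowright X$ be an effective subshift with $X\subset A^\Gamma$, and let $\Gamma\curvearrowright Y$ with $Y\subset B^\Gamma$ be a subshift that is a topological factor of $\Gamma\curvearrowright X$. By \Cref{prop:EDS_and_effective_subshift_are_equivalent} an effective subshift is an EDS, and since every subshift is zero-dimensional, $\Gamma\curvearrowright X$ is a zero-dimensional EDS. Then \Cref{prop:0d-EDS_is_WEDS} gives that $\Gamma\curvearrowright X$ is a WEDS, and since $\Gamma\curvearrowright Y$ is a zero-dimensional topological factor of a WEDS, \Cref{prop:factor_of_WEDS_is_WEDS} yields that $\Gamma\curvearrowright Y$ is itself a WEDS.

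It then remains to recover effectiveness of $Y$ from the WEDS property, and the key observation is that the effectiveness of a subshift $Y\subset B^\Gamma$ is already witnessed by the tautological clopen partition. Concretely, I would take $\mathcal P=(P_b)_{b\in B}$ with $P_b=\{y\in Y : y(1_\Gamma)=b\}$ (keeping only the letters $b$ that actually occur), each $P_b$ being the clopen cylinder at the identity. Using the shift convention $(g^{-1}y)(1_\Gamma)=y(g)$, the membership condition $g^{-1}y\in P_{z(g)}$ defining the subshift cover reads $y(g)=z(g)$ for all $g\in\Gamma$, so $z=y$. Hence $Y(\Gamma\curvearrowright Y,\mathcal P)=Y$ under the identification of $B$ with $\{1,\dots,\lvert B\rvert\}$. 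Because $\Gamma\curvearrowright Y$ is a WEDS, this particular subshift cover is effective, so $Y$ is an effective subshift, as claimed.

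The only point requiring genuine care is the transition between ambient models: \Cref{def:WEDS} phrases the notion for actions on subsets of $A^\NN$, whereas our subshifts live in $A^\Gamma$ and $B^\Gamma$. I would dispatch this with the (routine) remark that being a WEDS is a topological-conjugacy invariant, since a conjugacy sends clopen partitions to clopen partitions and identifies the corresponding subshift covers; thus the notion applies unambiguously to any zero-dimensional action, in particular to a subshift realized in Cantor space. Everything else is bookkeeping, and in fact an alternative endgame bypasses the tautological partition altogether: a subshift is expansive, so once $\Gamma\curvearrowright Y$ is known to be a WEDS the remark that every expansive WEDS is an EDS upgrades it to an EDS, whence \Cref{prop:EDS_and_effective_subshift_are_equivalent} again gives effectiveness. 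I do not expect any real obstacle here, as the substance of the corollary is entirely carried by the WEDS machinery already in place.
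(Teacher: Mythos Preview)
Your proposal is correct and follows essentially the same route as the paper: effective subshift $\Rightarrow$ zero-dimensional EDS (via \Cref{prop:EDS_and_effective_subshift_are_equivalent}) $\Rightarrow$ WEDS (via \Cref{prop:0d-EDS_is_WEDS}) $\Rightarrow$ factor is WEDS (via \Cref{prop:factor_of_WEDS_is_WEDS}) $\Rightarrow$ expansive WEDS is EDS $\Rightarrow$ effective subshift again. The paper simply packages the middle steps into the immediately preceding corollary about expansive zero-dimensional factors of EDS, and also remarks that the result can alternatively be obtained via the Curtis--Hedlund--Lyndon theorem; your tautological-partition computation is a perfectly valid way to unwind the same conclusion.
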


We remark that~\Cref{cor:EDSsubshifts_closed_under_top_factors} can also be obtained through the Curtis-Hedlund-Lyndon theorem~\cite[Theorem 1.8.1]{ceccherini-SilbersteinC09}. We finish this section with the following question which we were unable to answer.

\begin{question}
    Is it true that any WEDS is the topological factor of some EDS?
\end{question}

		\Addresses
		
		\bibliographystyle{abbrv}
		\bibliography{ref}
	\end{document}